\documentclass[intlimits]{amsart}
\usepackage[utf8]{inputenc} 
\usepackage{epigraph}
\usepackage[all]{xy}
\usepackage{amssymb}
\usepackage{mathtools}
\usepackage{hyperref}

\newcommand{\lcm}{\text{lcm}}

\newcommand{\rad}{{\rm rad}}

\newcommand{\diag}{{\rm diag}}

\newcommand{\rk}{{\rm rk}}

\newcommand{\Q}{{\bf Q}}

\newcommand{\0}{\boldsymbol{0}}
\newcommand{\Z}{\mathbb{Z}}
\newcommand{\C}{\mathbb{C}}

\newcommand{\N}{\mathbb{N}}
\newcommand{\Hn}{\mathfrak{H}_n}

\newcommand{\vtn}{\vartheta^{(n)}}
\newcommand{\gen}{\operatorname*{gen}}
\newcommand{\A}{\mathbb{A}}
\newcommand{\R}{\mathbb{R}}

\newcommand{\tr}{{\rm tr}}

\newcommand{\mcB}{{\mathcal B}}

\newtheorem{theorem}{Theorem}[section]
\newtheorem{lemma}[theorem]{Lemma}
\newtheorem{definition}[theorem]{Definition}
\newtheorem{remark}[theorem]{Remark}

\newtheorem{corollary}[theorem]{Corollary}

\newtheorem{proposition}[theorem]{Proposition}
\numberwithin{equation}{section}

\parindent=0pt
\setlength{\epigraphwidth}{0.48\textwidth}
\begin{document}
`
\setcounter{MaxMatrixCols}{20}

\title
{Paramodular groups and theta series}  
\author[S. Böcherer, R. Schulze-Pillot]{Siegfried Böcherer, Rainer Schulze-Pillot} 

\maketitle

\begin{abstract}
  For a paramodular group of any degree and square free level we study
  the Hecke algebra and the boundary components. We define paramodular
  theta series and show that for square free level and large enough
  weight they generate the space of cusp forms (basis problem), using
  the doubling or pullback of Eisenstein series method. For this we
  give a new geometric proof of Garrett's double coset decomposition
  which works in our more general situation.
\end{abstract}
\section{Introduction}\label{sec1}
Paramodular groups are rational discrete subgroups of the real symplectic group
which have been considered in \cite{scc,scs, christian} and treated by
Siegel under the name ``Stufengruppe der 
Stufe $T$'' in \cite{siegel_stufe}. From an adelic point of view a maximal paramodular group is a
group of type $Sp_n(\Q)\cap \prod_{p \text{ prime}}K_p$, where
$K_p\subseteq Sp_n(\Q_p)$ is a maximal compact  subgroup which is not
special for all primes $p$  dividing
the determinant of the defining matrix $T \in M_n^{{\rm sym}}(\Z)\cap
GL_n(\Q)$ and special for all other primes $p$. The maximal
paramodular groups are also called paramodular groups of square free
level since they can also be characterized by the condition that
$NT^{-1}$ is integral for some square free integer $N$; notice that
the concept of level used here is quite different from the one for
congruence subgroups of the integral symplectic group.

The theory of Siegel modular forms for paramodular groups has recently gained  increasing interest, in particular
in the case of degree $2$. Reasons for this are the investigations of
Gritsenko and Hulek of moduli spaces of abelian surfaces with
$(1,t)$-polarization, see e. g. \cite{gr-hu_imrn99}, the theory of
newforms for the group $GSp(4)$  (written as $GSp_2$ in our
terminology) of Roberts and Schmidt
\cite{roberts_schmidt}, and the modularity conjecture of Brumer and
Kramer \cite{brum-kra} together with the numerical evidence provided
by Poor and Yuen \cite{poor_yuen_paramodular} and the partial proof in
\cite{berger_klosin}, see also
\cite{fretwell} for the connection to a conjecture of Harder about
congruences. 

In contrast to the theory of Siegel modular forms for
the modular group $Sp_n(\Z)$ and its congruence subgroups of type
$\Gamma_0^{(n)}(N)$, however, very little is known for paramodular
groups of higher degree $n>2$. In particular,
the methods of Poor and Yuen for constructing
concrete examples of paramodular forms in degree $2$ are based on Gritsenko's lift
generalizing the classical lift of Maa\ss\  from Jacobi forms to Siegel
modular forms of degree $2$ and can not be adapted to cases of higher
degree without first finding a substitute for Gritsenko's lift in
those cases. Theta series of positive definite quadratic forms as used for
the modular group and its congruence subgroups don't have a good
transformation behaviour under paramodular groups.  In some cases they could be used by
Poor and Yuen with the help of tracing constructions, but a definition
of theta series that is adapted to the paramodular situation has been
missing so far. 
Moreover, the Hecke algebras of
paramodular groups, even if they are maximal, can not be treated using the general results for
Hecke algebras of special
maximal compact subgroups of $p$-adic reductive groups
\cite{satake}\cite[Sec. 3.5]{cartier}
since some of
their $p$-adic components are not special. This is probably the reason
why they have so far been studied only for  degree $2$ by
Gallenkämper and Krieg in
\cite{gall_krieg}.

The goal of this article is to study the Hecke algebras of maximal paramodular
groups in spite of these obstacles and to show that in the theory of paramodular
forms of any degree suitably defined theta series
of positive definite quadratic forms can play a similar role as the
usual theta series do for modular forms  for the full modular group
$Sp_n(\Z)$ or its congruence subgroups of type
$\Gamma_0^{(n)}(N)$. In
particular we give a definition of paramodular theta series and study
in the case of square free level the so called 
basis problem, i.e., the question 
whether these theta series generate the full space of paramodular forms of the
type in question and can hence be used for the explicit construction
of a basis of this space.  As an important tool for this we  give explicit
representatives of the basic double cosets in Hecke
algebras of maximal paramodular groups of arbitrary degree, generalizing the result of
\cite{gall_krieg}. An interesting feature of this Hecke algebra is
that, although it is not commutative, it contains a commutative
subalgebra which is very similar to the subalgebra generated by the
$T(m)$ in the theory for  the  modular group and can be used in our
applications to play the same role as that algebra does for the
modular group.   Our methods rely very much on the maximality of the
groups and we do not attempt to treat more general levels,
corresponding to non maximal groups. 
In fact we have no idea whether our results or
analogues thereof are true or false in such
more general cases.

In Section \ref{sec2} we study lattices in a vector space with nondegenerate
alternating bilinear form and their totally isotropic primitive
submodules. We use the lattice approach to define, following \cite{scc}, para-symplectic
groups as isometry groups of lattices and obtain
the usual integral paramodular groups as matrix groups of these with respect to
suitable bases. Using the study of  orbits of primitive totally
isotropic submodules we determine and count the boundary components of
the Siegel upper half space under the action of a paramodular group of
square free level and arbitrary degree.

In Section \ref{sec3} we study Hecke algebras for paramodular groups of square
free level and arbitrary degree and more general spaces of double cosets with respect to
two possibly different maximal paramodular groups.  As in the case of the full
integral symplectic group and its congruence subgroups the double
coset space  factors by strong 
approximation into a restricted tensor product of local spaces. The
key idea for the investigation of these is to
view a local double coset as an orbit of a lattice $L$ in $\Q_p^{2m}$
under the action of the isometry group of a standard lattice
$\Lambda$. In the classical case of level $1$ these orbits are
characterized by the elementary divisors of $L$ with respect to
$\Lambda$, which leads to the usual diagonal representatives of the
double cosets. In our case one has to consider in addition the elementary
divisors in the dual $\Lambda^\#$ of $\Lambda$ with respect to the
alternating form. In this way we obtain explicit sets of
representatives of these double cosets, generalizing results of
Gallenkämper and Krieg \cite{gall_krieg} for degree $2$,  and find a commutative
subalgebra of the Hecke algebra that is important for our applications
in the sequel. An analogous approach should work for the study
of the Hecke algebra of a (not necessarily special) maximal compact
local orthogonal group, but we don't pursue this further here. It may
also be possible to use our methods for the determination of the
structure of the Hecke algebra.

Section \ref{sec4} is devoted to a generalization of Garrett's decomposition
from \cite{garrett} of
the group $Sp_{m+n}(\Z)$ into double cosets with respect to the Siegel
maximal parabolic subgroup on one side and a diagonally  embedded
product $Sp_m(\Z)\times Sp_n(\Z)$ on the other side.
This decomposition lies at the heart of the doubling method or method
of pullbacks of Eisenstein series, which is an often used tool for the
study of the analytic properties of $L$-functions of Siegel modular
forms and for the study of the basis problem.

To generalize it to the paramodular
situation we have to replace  $Sp_m(\Z)\times Sp_n(\Z)$ by paramodular groups
$\Gamma_1=Sp(\Lambda_1)\subseteq Sp_m(\Q),
\Gamma_2=Sp(\Lambda_2)\subseteq Sp_n(\Q)$ of square free 
levels attached to suitable lattices $\Lambda_1\subseteq \Q^{2m},
\Lambda_2\subseteq \Q^{2n}$.  We found it difficult to generalize Garrett's ``tedious bit of 
linear algebra'' approach of
finding representatives by explicit elementary matrix operations and
then relate these to Hecke double coset representatives.

Instead, we choose a geometric approach. We identify  the double cosets with orbits of maximal totally
isotropic submodules $X$  of $\Lambda:=\Lambda_1\oplus \Lambda_2$ under the
action of $Sp(\Lambda_1)\times Sp(\Lambda_2)$ and investigate the
projections $X_1,X_2$ to $\Lambda_1,\Lambda_2$ of such a module (which
are in general no longer totally isotropic). This idea originates in
\cite{ps-rallis} and has been used by Murase \cite{murase} and
by Müller \cite{mueller}, who, in the case of the integral symplectic
group,  constructs explicit bases of the $X_i$ in order to find
representatives of Garrett's double cosets. Modifying that approach we
find that a triple consisting of the $Sp(\Lambda_i)$-orbits of the radicals $\rad(X_i)$
together with the orbit of a natural isomorphism
$\phi:X_1/\rad(X_1)\to X_2/\rad(X_2)$ under the action of suitable
paramodular 
subgroups $\Gamma, \Gamma'$ of the $Sp(\Lambda_i)$ characterizes the
orbit of $X$.
The orbits of the primitive totally isotropic submodules $\rad(X_i)$
of $\Lambda_i$ have been classified in Section \ref{sec2}, and the orbit of the
natural isomorphism $\phi$ can be described by a Hecke double coset
with respect to paramodular groups of degree $r=\rk(X_i/\rad(X_i))/2$.

In this way we obtain a direct proof that representatives of Garrett's
double cosets can be expressed in terms of
the representatives of Hecke double cosets with respect to
paramodular groups 
$\Gamma,\Gamma'$ and then use our
computation of the latter from the previous section in order to return
to explicit matrix representatives. In the situation
of the full modular group this gives a more conceptual
simplification of Garrett's proof. 

In Section \ref{sec5} we define paramodular theta series associated to lattice chains and
show that this gives examples of paramodular forms. In Section \ref{sec6} we prove a
paramodular version of Siegel's theorem, expressing  a weighted linear
combination of these theta series as a paramodular Eisenstein series.

Section \ref{sec7} then puts together the results obtained so far and
generalizes the approach from \cite{boech} to the basis problem
for the modular group to the paramodular situation, again only for
square free levels. We treat here only
the case of paramodular cusp forms and show that all paramodular cusp
forms of square free level can be written as linear combinations of
paramodular theta series of the same level.
We notice that it is this
level aspect that causes most of the technical difficulties in our
approach; we don't see a way to circumvent them by using the adelic
doubling method instead of our classical pullback method.

We have not been able to treat the
non-cuspidal case, which presents
some   interesting challenges caused by the more complicated structure 
of (equivalence classes of)
cusps, see Section \ref{sec2}.
In particular, we get only one Siegel Eisenstein series, but
for more general boundary 
components we have to consider Klingen Eisenstein series
for lower rank paramodular groups with  several 
different polarization matrices. To deal with these seems to be more
difficult than one might think at first sight.

 \section{Paramodular groups and their modular forms}\label{sec2}
 \begin{definition}\label{para-symplectic}
Let $R$ be a principal ideal domain with field of fractions $F$ and $\Lambda\subseteq V$ 
  an $R$-lattice on the $2m$-dimensional $F$-vector space $V$  with
  nondegenerate alternating bilinear form $\langle, \rangle$, assume
  $\langle \Lambda,\Lambda\rangle\subseteq R$.

We denote the group of
isometries  of the alternating module $(\Lambda,\langle,\rangle)$ by $Sp(\Lambda)$ and call it its symplectic or
para-symplectic group.

We call an $R$-basis
$\mcB=\{e_i,f_i\mid 1 \le i \le m\}$ of $\Lambda$  a
para-symplectic basis if one has $\langle e_i,e_j\rangle=\langle
f_i,f_j\rangle=0$, $\langle e_i,f_j\rangle =d_i\delta_{ij}$ with
$d_i\ne 0$ for all $i, j$. It is called ordered or of elementary
divisor type if one has $\langle e_i,f_i\rangle \mid \langle
e_{i+1},f_{i+1} \rangle$ for all $i<m$.

We call the group of matrices of the elements of $Sp(\Lambda)$ with respect to
such a para-symplectic basis an integral paramodular group of matrix level
(Matrixstufe) $T={\rm diag}(d_1,\ldots,d_m)$ and denote this group by
$\hat{\Gamma}^{(m)}(T)$. The
subgroup of the symplectic matrix group $Sp_m(F)\subseteq GL_{2m}(F)$
consisting of the matrices of the elements of $Sp(\Lambda)$ with
respect to the standard symplectic basis $\{e_i,d_i^{-1}f_i\mid 1\le i
\le m\}$ of $V$ is called a symplectic paramodular group or a
paramodular subgroup of $Sp_m(F)$ of matrix level (Matrixstufe) $T$
and will be denoted by $\Gamma^{(m)}(T)$. If all the $d_i$ are square
free we say that the matrix level $T$ is square free.

We
  say that $\Lambda$ has 
 level $N\in R$
 if $N$ is the least common multiple of the $d_i$ above, or equivalently if 
 $N$ is the least common multiple of all $N'$ for which $\Lambda^\#\supseteq \Lambda
  \supseteq N'\Lambda^\#$ holds, where $\Lambda^\#$ is the dual lattice with
  respect to $\langle, \rangle$.

The determinant of $\Lambda$ is the square class $\det(A)(R^\times)^2$
of the determinant of the matrix $A=(a_{ij})=(\langle v_i,v_j\rangle)$
of the bilinear form $\langle, \rangle$ with respect to a basis
  $\{v_i\}$ of $\Lambda$.
\end{definition}

\begin{remark}
  \begin{enumerate}
\item If $F$ is a nonarchimedean local field with ring of integers $R$
  and prime element $\pi$, the groups
  $\Gamma^{(m)}(\diag(1,\ldots,1,\pi,\ldots,\pi))$ with $1\le r< m$
  entries $\pi$ are stabilizers of non-special vertices in the
  Bruhat-Tits building of $Sp(m,F)$ and represent the conjugacy
  classes of maximal compact subgroups of $Sp_m(F)$ which are not
  special, see e. g. \cite{garrettbook}. 
\item A para-symplectic basis of elementary divisor type as described in the definition always
  exists \cite{scc}. 
\item Siegel \cite{siegel_stufe} defined for any nonsingular  matrix $T\in
M_{2m}(\Z)$ the ``Modulgruppe der Stufe $T$'' to be the
set of all matrices $M\in M_{2m}(\Z)$ with
\begin{equation*}
  {}^tM
  \begin{pmatrix}
    0&T\\-T&0
  \end{pmatrix}
M=\begin{pmatrix}
    0&T\\-T&0
  \end{pmatrix}.
\end{equation*}
If $T$ is diagonal this is an integral paramodular
group as defined above.
If $T$ is symmetric it is the matrix group attached
to a group $Sp(\Lambda)$ as above with respect to a more
general basis of $\Lambda$, hence conjugate to a group for diagonal
$T$ in elementary divisor form.
For diagonal $T$ (assumed to be in elementary
divisor form) Kappler \cite{kapplerdiss}
called such a group ``Siegelsche Stufengruppe $\Gamma(m,T)$ der Stufe $T$'' and
studied generators of these groups. Later work on such groups and
their modular forms also used the symplectic realization and
introduced the now common word ``paramodular group''. We summarize
the relations between the various realizations used in the literature:
One has 
\begin{equation*}
\Gamma^{(m)}(T)=Sp_{m}(\Q)\cap \begin{pmatrix}1&0\\0&T\end{pmatrix}
M_{2m}(\Z) \begin{pmatrix}1&0\\0&T^{-1}\end{pmatrix}
\end{equation*}
and 
\begin{eqnarray*}
  \Gamma^{(m)}(T)&=&
 \begin{pmatrix}
    1_m&0_m\\0_m&T
  \end{pmatrix} \hat{\Gamma}^{(m)}(T)
  \begin{pmatrix}
    1_m&0_m\\0_m&T^{-1}
  \end{pmatrix}\\
&=&
    \begin{pmatrix}
     T^{-1}&0_m\\
     0_m&T 
    \end{pmatrix}
 \begin{pmatrix}
    T&0_m\\0_m&1_m
  \end{pmatrix} \hat{\Gamma}^{(m)}(T)
  \begin{pmatrix}
    T^{-1}&0_m\\0_m&1_m
  \end{pmatrix}
                     \begin{pmatrix}
                       T&0_m\\
                       0_m&T^{-1}
                     \end{pmatrix}.
\end{eqnarray*}

\end{enumerate}
\end{remark}

Mainly to fix notation, we recall some definitions concerning Siegel modular 
forms: Let ${\mathfrak H}_n$ be Siegel's upper half space of degree $n$
consisting of complex symmetric matrices of size $n$ with positive definite
imaginary part. The real symplectic group $Sp_n({\mathbb R})$ 
acts on ${\mathfrak H}_n$ by
$$(M,Z)\longmapsto MZ:=M\cdot Z:= (AZ+B)(CZ+D)^{-1}$$  
with $M=\left(\begin{array}{cc} A&B\\
C & D\end{array}\right)$; this group then also acts on functions 
$f:{\mathfrak H}_n\longrightarrow {\mathbb C}$ by
$$(M,f) \longmapsto (f\mid_kM)(Z):= \det(CZ+D)^{-k}f(M\cdot Z)$$

\begin{definition}\label{modularformdef}
  Let $T=\diag(d_1,\ldots,d_n)\in M_n(\Z)$ be a diagonal matrix.
  A modular form for $\Gamma^{(n)}(T)$ of weight $k$ %
  is for $n \geq 2$ a holomorphic function
$f: \: {\mathfrak H}_n \longrightarrow \C$ satisfying
 \begin{equation*}
   (f|_k\gamma)(Z) := f(\gamma Z)\det(CZ+D)^{-k} = %
   f(Z)
 \end{equation*}
for all $\gamma = \begin{pmatrix} A & B\\ C&D 
\end{pmatrix} \in \Gamma^{(n)}(T)$,
$Z \in {\mathfrak H}_n$. It is a cusp form if for
all $g \in Sp_n(\Q)$  the Fourier coefficients
of $(f\vert_kg)$ are zero
at all degenerate matrices.
\end{definition}
Note that this definition makes sense not only for paramodular groups but
more generally for any group $\Gamma$ commensurable with $Sp(n,{\mathbb Z})$.
For general properties of such modular forms we refer to the standard literature
\cite{andrianov, freitagbook, Klibuch}, in particular, all the results
on modular forms for congruence subgroups from \cite[chapt.2]{andrianov} 
are also valid for
paramodular forms.  

\begin{remark}\label{remark_para}
  \begin{enumerate}
    \item If $T=\diag(d_1,\ldots,d_n)$ is in elementary divisor form,
      one can, as is usual, without loss of generality assume $d_1=1$.
\item The action of $\hat{\gamma} \in
\hat{\Gamma}^{(n)}(T)$ on the upper  half space ${\mathfrak H}_n$
described in \cite{siegel_stufe} is the same as the usual action of 
\begin{equation*}
\begin{pmatrix}
    T&0_n\\0_n&1_n
  \end{pmatrix} \hat{\gamma}
  \begin{pmatrix}
    T^{-1}&0_n\\0_n&1_n
  \end{pmatrix}.
\end{equation*}
\item It is sometimes useful to consider for a number $N\in \N$ prime to $\det(T)$ slightly more general the
  groups
$
\hat{\Gamma}_0^{(n)}(T,N)$ for the set of $\hat{\gamma} \in
\hat{\Gamma}^{(n)}(T)$ for which the lower left $n\times n$-block has
entries divisible by $N$ and
\begin{equation*}
  \Gamma_0^{(n)}(T,N):=
  \begin{pmatrix}
    1_n&0_n\\0_n&T
  \end{pmatrix} \hat{\Gamma}_0^{(n)}(T,N)
  \begin{pmatrix}
    1_n&0_n\\0_n&T^{-1}
  \end{pmatrix}.
\end{equation*}
Equivalently we have 
\begin{equation*}
\Gamma_0^{(n)}(T,N)=Sp_{n}(\Q)\cap \begin{pmatrix}1&0\\0&NT \end{pmatrix}
M_{2n}(\Z) \begin{pmatrix}1&0\\0&N^{-1}T^{-1}\end{pmatrix}
\end{equation*}

In particular, for $n=2, T=\begin{pmatrix}1&0\\0&t\end{pmatrix}$ we have

\begin{equation*}
\Gamma_{t,0}^{(2)}(N):= \Gamma_0^{(2)}(T,N)
\{\begin{pmatrix}*&t*&*&*\\ *&*&*&*/t
  \\N*&Nt*&*&*\\Nt*&Nt*&t*&*\end{pmatrix} \in Sp_2(\Q)\} ,
\end{equation*}
where the asterisks symbolize integral entries.

For $T = 1_n$ we have $\Gamma_0^{(n)}(T,N) = \Gamma_0^{(n)}(N)
\subseteq Sp_n(\Z)$.

\end{enumerate}
\end{remark}

In order to discuss the boundary components of the quotient of 
Siegel's space ${\mathfrak H}_n$ by a paramodular group we return to the
setting of  a module $\Lambda$  with alternating bilinear form over a principal
ideal domain $R$ with field of quotients $F$.
It is well known that any such module $\Lambda$ has an ordered  para-symplectic basis. We will
later need that for square free level such a basis can be adapted to any given primitive
totally isotropic submodule of $\Lambda$:

\begin{theorem}\label{isotropic_submodule_bases}
  Let $R$ be a principal ideal domain with field of fractions $F$
and $\Lambda\subseteq V$ 
  an $R$-lattice on the $2m$-dimensional $F$-vector space $V$  with
  nondegenerate alternating bilinear form $\langle, \rangle$, assume
  $\langle \Lambda,\Lambda\rangle\subseteq R$ and that the 
level of $\Lambda$ divides the square free element  $N\in R$.

Let $Z$ be a primitive
   submodule of rank $r\le m$ of $\Lambda$ (i.e., $Z$ is a direct
   summand of $\Lambda$, equivalently $FZ\cap \Lambda=Z$) which is
   totally isotropic with respect to $\langle,\rangle$.
   
Then there are a basis
  $(e_1,\ldots, e_r)$ of $Z$ and  vectors $f_1, \ldots,f_r
  \in \Lambda$  generating a totally isotropic submodule of $\Lambda$
  and satisfying $\langle e_i,f_j\rangle =d_i\delta_{ij}$ with $d_i\mid N$ such
  that 
\begin{equation*}
M:=M(Z):=\bigoplus_{i=1}^rRe_i\oplus
  \bigoplus_{i=1}^r Rf_i\end{equation*} can be split off
orthogonally in $\Lambda$, i.e., one has $\Lambda=M \perp
\Lambda'$ for some submodule $\Lambda'\subseteq \Lambda$.

Moreover, if $X\supseteq Z$ is a sublattice of $\Lambda$ with
$Z\subseteq \rad(X):=\{x\in X\mid \langle x,X\rangle=\{0\}\}$ one has
$X=Z\perp X'$ with 
$X':=X\cap \Lambda'$, and $X'$ is nondegenerate if and only if $Z=\rad(X)$ holds.

In particular, if $X=Z$ is maximal totally isotropic there exist a basis
  $(e_1,\ldots, e_m)$ of $X$ and  vectors $f_1, \ldots,f_m
  \in \Lambda$  generating a totally isotropic submodule of $\Lambda$
  and satisfying $\langle e_i,f_j\rangle =d_i\delta_{ij}$ with $d_i\mid N$ such
  that 
\begin{equation*}
\Lambda=\bigoplus_{i=1}^mRe_i\oplus
  \bigoplus_{i=1}^m Rf_i.\end{equation*} 
\end{theorem}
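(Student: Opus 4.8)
I would prove the main assertion by induction on $r$, peeling off one rescaled hyperbolic plane at a time, and then deduce the remaining two assertions. For the inductive step, first put the pairing $Z\times\Lambda\to R$ into normal form: regarding it as an injection $Z\hookrightarrow\Lambda^\#=\operatorname{Hom}_R(\Lambda,R)$ and applying the elementary divisor theorem over the principal ideal domain $R$, one obtains a basis $e_1,\dots,e_r$ of $Z$, a basis $g_1,\dots,g_{2m}$ of $\Lambda$ and elements $d_1\mid d_2\mid\dots\mid d_r$ of $R$ with $\langle e_i,g_j\rangle=d_i\delta_{ij}$ for $1\le i\le r$. Then $\langle e_i,\Lambda\rangle=(d_i)$, so $e_i/d_i\in\Lambda^\#$; since $e_i$ is primitive in $\Lambda$ (because $Z$ is a primitive submodule), the inclusion $N\Lambda^\#\subseteq\Lambda$ forces $d_i\mid N$. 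Note that $g_1$ already satisfies $\langle e_1,g_1\rangle=d_1$ and $\langle e_i,g_1\rangle=0$ for $i\ge 2$; what can fail is the content equality $\langle\Lambda,g_1\rangle=(d_1)$ (only $(d_1)\subseteq\langle\Lambda,g_1\rangle$ is automatic).

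The crux is to replace $g_1$ by a vector $f_1\in\Lambda$ with $\langle e_1,f_1\rangle=d_1$, $\langle e_i,f_1\rangle=0$ for $2\le i\le r$, and $\langle\Lambda,f_1\rangle=(d_1)$; the last condition is exactly what makes $M_1:=Re_1\oplus Rf_1$ split off orthogonally, since then the projection of any $v\in\Lambda$ onto $M_1$ along $M_1^{\perp}$ has coefficients $\langle v,f_1\rangle/d_1$ and $-\langle v,e_1\rangle/d_1$ in $R$. I would construct $f_1$ by a local computation at the finitely many primes $p\mid N$ followed by a globalization argument; at all other primes the primitive vector $g_1$ already has the right content. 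For $p\mid d_1$ one has $p\mid N$, and square-freeness of the level gives a Jordan splitting $\Lambda_p\cong H^{a}\perp(pH)^{b}$ of $\Lambda_p$ into a unimodular and a $p$-modular part, $H$ the hyperbolic plane over $R_p$. Decomposing $e_1$ accordingly, the equality $\langle\Lambda_p,e_1\rangle=pR_p$ forces the unimodular component of $e_1$ into $pH^{a}$, so primitivity of $e_1$ forces its $p$-modular component to be primitive in $(pH)^{b}$; that component therefore has a hyperbolic partner pairing with it to $d_1$, and one uses this partner to build $f_1$ at $p$, arranging (as one may) orthogonality to $e_2,\dots,e_r$. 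This is precisely the point where square-freeness is essential: already in rank $1$ the statement fails for non-square-free level --- for instance for $e=px_1+x_2$ in $H\perp(p^2H)$ there is no $f$ with $\langle e,f\rangle$ generating $\langle e,\Lambda\rangle$ and $\langle\Lambda,f\rangle=\langle e,\Lambda\rangle$.

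Given such an $f_1$, set $M_1=Re_1\oplus Rf_1$ (Gram determinant $d_1^{2}\ne 0$) and $\Lambda_1:=M_1^{\perp}\cap\Lambda$, so $\Lambda=M_1\perp\Lambda_1$. Since $e_2,\dots,e_r$ lie in both $e_1^{\perp}$ and $f_1^{\perp}$, one checks that $Z\cap\Lambda_1$ is primitive and totally isotropic of rank $r-1$ with $Z=Re_1\oplus(Z\cap\Lambda_1)$, and that $\Lambda_1$ again has level dividing $N$ (it inherits $N\Lambda_1^{\#}\subseteq\Lambda_1$). The induction hypothesis applied to $(\Lambda_1,Z\cap\Lambda_1)$ then yields $e_2,\dots,e_r,f_2,\dots,f_r\in\Lambda_1$ with the stated properties; because the first step takes place in $M_1$ and the rest in the orthogonal summand $\Lambda_1$, the vectors $f_1,\dots,f_r$ together span a totally isotropic submodule, $\langle e_i,f_j\rangle=d_i\delta_{ij}$ with $d_i\mid N$, and $\Lambda=M(Z)\perp\Lambda'$ for $M(Z)=\bigoplus_iRe_i\oplus\bigoplus_iRf_i$.

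For the second assertion, assume $Z\subseteq\rad(X)$. For $x\in X$ write $x=m+x'$ with $m\in M(Z)$, $x'\in\Lambda'$; pairing with each $e_j$ and using $\langle e_j,x\rangle=0=\langle e_j,x'\rangle$ shows every $f_j$-coefficient of $m$ vanishes, so $m\in\bigoplus Re_i=Z$ and hence $X=Z\perp(X\cap\Lambda')$. Putting $X'=X\cap\Lambda'$ we get $\rad(X)=\rad(Z)\perp\rad(X')=Z\oplus\rad(X')$, so $X'$ is nondegenerate if and only if $Z=\rad(X)$. Finally, if $X=Z$ is maximal totally isotropic then $\rk(Z)=m$, so $M(Z)$ has rank $2m=\rk(\Lambda)$, forcing $\Lambda'=0$ and $\Lambda=\bigoplus_{i=1}^{m}Re_i\oplus\bigoplus_{i=1}^{m}Rf_i$. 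The main obstacle is the local step of the second paragraph --- producing $f_1$ with the correct content and simultaneously orthogonal to the remaining chosen basis vectors of $Z$ --- which is exactly where the square-free hypothesis is indispensable.
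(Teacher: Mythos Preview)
Your argument follows the same inductive strategy as the paper: pick a primitive $e\in Z$ with $\langle e,\Lambda\rangle=dR$ and $d\mid N$, find a partner $f\in\Lambda$ so that $Re\oplus Rf$ is a $d$-modular plane splitting off orthogonally, and recurse on $Z\cap\Lambda_1$; the treatment of $X\supseteq Z$ and of the maximal totally isotropic case is likewise the same.

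The one substantive difference is how the partner $f$ is produced. You pass to local Jordan splittings $\Lambda_p\cong H^{a}\perp(pH)^{b}$ at the primes $p\mid d$ and then globalize. The paper instead works globally: it shows directly that $\langle e,\,d\Lambda^{\#}\cap\Lambda\rangle=dR$ by a short case analysis on primes $p$ (this is exactly where square-freeness enters), and then takes any $f\in d\Lambda^{\#}\cap\Lambda$ with $\langle e,f\rangle=d$, which automatically has $\langle\Lambda,f\rangle\subseteq dR$. This is a little quicker and avoids the local-to-global patching. It also shows that your extra requirement $\langle e_i,f_1\rangle=0$ for $i\ge 2$ is unnecessary: once $\langle\Lambda,f_1\rangle=d_1R$, the map $\langle\cdot,f_1\rangle\colon Z\to d_1R$ is surjective with $e_1\mapsto d_1$, so $Z=Re_1\oplus(Z\cap\Lambda_1)$ already follows. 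That is worth noting, since the orthogonality to $e_2,\dots,e_r$ is the one place in your sketch that is asserted (``arranging, as one may'') rather than argued, and it is in fact the hardest of your three conditions to arrange simultaneously with the content condition; simply dropping it removes the only soft spot.
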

\begin{proof} This is a modification of \cite[Theorem 3.1]{rsp_hyperbolic}:
If $e\in Z$ is a primitive vector we have $\langle e,
\Lambda\rangle\supseteq \langle e, N\Lambda^\#\rangle =NR$, hence
$\langle e, \Lambda \rangle = dR$ for some $d \mid N$.

We have then $\langle e, d\Lambda^\#\cap \Lambda\rangle \subseteq dR$, and
if there were a prime $p\in R$ with $\langle e, d\Lambda^\#\cap
\Lambda \rangle \subseteq pdR$ we had $e \in
\Lambda \cap pd(d\Lambda^\#\cap\Lambda)^\#=p\Lambda+(pd\Lambda^\#\cap \Lambda)$. If
one had $p\nmid d$ this would imply $p\mid \langle e, \Lambda\rangle$,
which is a contradiction. On the other hand, $p\mid d$ implies
$d\Lambda^\# \cap \frac{1}{p}\Lambda\subseteq \Lambda$ since the level
of $\Lambda$ is square free, which contradicts the assumption that 
$e$ is primitive. 

We can  therefore find $f\in d\Lambda^\#\cap\Lambda$
with $\langle e,f \rangle =d$.
Since we have $\langle
Re+Rf, \Lambda \rangle\subseteq dR$ we can again split off the
$d$-modular hyperbolic plane $Re+Rf$ in $\Lambda$ with
orthogonal complement $\Lambda_1$.
In any case we consider  $Z':=Z\cap \Lambda_1$
with $\rk(Z')=\rk(Z)-1$, $Z'$ primitive in $\Lambda_1$,  and see that by induction we can obtain the vectors
$e_1,\ldots,e_r,f_1,\ldots,f_r$ as asserted.

Let now $x \in X$. We put
\begin{equation*}
  x':=x-\sum_{i=1}^r \frac{\langle x, f_i\rangle}{\langle e_i,f_i\rangle}e_i
\end{equation*}
and have $\langle x',f_i\rangle=0$ for $1\le i \le r$. Moreover, since
$\langle e_i,f_i\rangle =d\mid N$ if and only if $\langle \Lambda,
f_i\rangle =dR$ holds, we have $x' \in X\cap M^\perp$, with $x-x'
\in Z$, which gives $X=Z \perp (X\cap M^\perp)$ as asserted.

The rest of the assertion is obvious.
\end{proof}

Obviously, the $d_iR^\times$ for the elementary divisors $d_i$
associated to an ordered para-symplectic basis determine the isometry class of $(\Lambda,\langle,
\rangle)$ and one has $d_1\dots d_mR^\times=DR^\times$, where $D^2R^\times=(D(\Lambda))^2$
is the determinant 
of $\Lambda$. %
If the level of $\Lambda$ is square free the 
product $D=d_1\dots d_m$ determines already the isometry class:

\begin{lemma}\label{linear_algebra}
Let $R$ be a principal ideal domain with field of fractions $F$ and
$\Lambda$ an $R$-lattice of rank $2m$ on the $2m$-dimensional
$F$-vector space $V$, equipped with 
a nondegenerate alternating bilinear form $\langle, \rangle$.

Assume that $\Lambda$ with this form has square free level $N$,
let $(e_1,\ldots,e_m$, $f_1,\ldots, f_m)$ be a basis of $\Lambda$ for
which the alternating form has matrix $\bigl(
\begin{smallmatrix}
  0&A\\-{}^tA&0
\end{smallmatrix}\bigr)$.
Assume that the determinant of the matrix of $\langle, \rangle$ is
$D^2$ and that $\tilde{d}_1,\ldots, \tilde{d}_m$ are divisors of $N$ with $\tilde{d}_1\cdot
\dots \cdot \tilde{d}_m=D$. 

Then $M:=\oplus_iRe_i$ has a basis
$\{\tilde{e}_j=\sum_is_{ij}e_i\}$ and $M':=\oplus_iRf_i$ has a basis
$\{\tilde{f}_j=\sum_it_{ij}f_i\}$ with
$\langle \tilde{e}_i,\tilde{f}_j\rangle=\tilde{d}_i\delta_{ij}$. In
particular, the isometry class of $(\Lambda, \langle, \rangle)$ is
determined by $DR^\times=\det(A)R^\times$.

\end{lemma}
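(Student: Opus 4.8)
The plan is to translate the statement into a fact about Smith normal forms over the PID $R$ and then to reduce it to a short combinatorial computation that genuinely uses the square freeness of $N$. Writing $S=(s_{ij})$ and $T=(t_{ij})$ and computing directly, one gets $\langle\tilde e_i,\tilde f_j\rangle=\sum_{k,l}s_{ki}a_{kl}t_{lj}=({}^tSAT)_{ij}$, so the whole content is to produce $S,T\in GL_m(R)$ with ${}^tSAT=\diag(\tilde d_1,\dots,\tilde d_m)$. Since $U\mapsto{}^tU$ is a bijection of $GL_m(R)$ onto itself, this is exactly the assertion that the $m\times m$ matrix $A$ and $\diag(\tilde d_1,\dots,\tilde d_m)$ are $R$-equivalent, i.e. have the same elementary divisors up to units. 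Once such $S,T$ are at hand, $\tilde e_j=\sum_i s_{ij}e_i$ and $\tilde f_j=\sum_i t_{ij}f_i$ automatically form bases of $M$ and $M'$ because $S,T$ are invertible, and the asserted pairing relation holds by construction.

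First I would assemble the data. By the theory of Smith normal form over a PID, $A$ is $R$-equivalent to $\diag(d_1,\dots,d_m)$ with $d_1\mid d_2\mid\cdots\mid d_m$. Changing the bases of $M=\oplus Re_i$ and $M'=\oplus Rf_i$ accordingly turns $(e_i,f_i)$ into an ordered para-symplectic basis of $\Lambda$ with $\langle e_i,f_i\rangle=d_i$; hence the $d_iR^\times$ are the elementary divisors of $(\Lambda,\langle,\rangle)$, $d_m=\lcm(d_1,\dots,d_m)=N$ is the level, and $d_1\cdots d_m R^\times=D R^\times=\det(A)R^\times$. In particular each $d_i$ divides $d_m=N$, so, $N$ being square free, each $d_i$ is square free; likewise each $\tilde d_i$ divides $N$ and is therefore square free; and $\prod_i\tilde d_i=\prod_i d_i$ up to a unit.

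The key step is that $\diag(\tilde d_1,\dots,\tilde d_m)$ and $\diag(d_1,\dots,d_m)$ are $R$-equivalent. I would prove this by comparing, for each $k=1,\dots,m$, the $k$-th determinantal divisor $\delta_k$ (the gcd of all $k\times k$ minors), one prime at a time. For a diagonal matrix the only nonzero $k\times k$ minors are the products $\prod_{i\in I}(\text{diagonal entry}_i)$ over $k$-element subsets $I$, so for a prime $p$ one has $v_p(\delta_k)=\min_{|I|=k}\sum_{i\in I}v_p(\text{entry}_i)$. When all entries are square free the summands lie in $\{0,1\}$; writing $n_p$ for the number of entries divisible by $p$ (so $n_p=v_p$ of the product), the minimum is achieved by putting as many $p$-free indices into $I$ as possible, giving $v_p(\delta_k)=\max(0,\,k-(m-n_p))$. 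Since $\prod\tilde d_i=\prod d_i$ up to a unit, the numbers $n_p$, and hence all the $v_p(\delta_k)$, agree for $\diag(\tilde d_i)$ and $\diag(d_i)$; as $\diag(d_1,\dots,d_m)$ with $d_1\mid\cdots\mid d_m$ is already in Smith normal form (with $k$-th determinantal divisor $d_1\cdots d_k$), the two diagonal matrices have the same Smith normal form and are therefore $R$-equivalent. Composing with $A\sim\diag(d_i)$ gives $U,W\in GL_m(R)$ with $UAW=\diag(\tilde d_i)$, and then $S:={}^tU$ and $T:=W$ satisfy ${}^tSAT=\diag(\tilde d_1,\dots,\tilde d_m)$, finishing this part.

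For the final clause: in the square free level case $N$ is recovered from $D$, since a prime $p$ divides $D=d_1\cdots d_m$ if and only if it divides $d_m=N$ (because $d_i\mid d_m$); thus $N=\rad(D)$. Then, using $d_1\mid\cdots\mid d_m$, $v_p(d_i)\in\{0,1\}$ and $v_p(d_1\cdots d_m)=v_p(D)$, one sees that $v_p(d_i)=1$ exactly for the top $v_p(D)$ indices $i$, so the whole tuple $(d_iR^\times)$ is pinned down by $DR^\times$; since the elementary divisors $d_iR^\times$ determine the isometry class of $(\Lambda,\langle,\rangle)$ (recalled just before the lemma), so does $DR^\times=\det(A)R^\times$. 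I do not expect a real obstacle: the argument is a clean reduction to Smith normal form, and the one hypothesis that genuinely bites is the square freeness of $N$, which forces all relevant $p$-adic valuations into $\{0,1\}$ and trivializes the minor combinatorics; the only things to watch are the bookkeeping of units and the identification of the level $N$ with the largest elementary divisor $d_m$, hence with $\rad(D)$.
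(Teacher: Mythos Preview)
Your proposal is correct and follows essentially the same approach as the paper, whose entire proof reads ``Use the elementary divisor theorem.'' You have simply unpacked that one-liner: reducing to the matrix identity ${}^tSAT=\diag(\tilde d_i)$, invoking Smith normal form, and then supplying the prime-by-prime determinantal-divisor computation (where square freeness forces all valuations into $\{0,1\}$) that the paper leaves implicit.
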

\begin{proof}
Use the elementary divisor theorem.
\end{proof}
\begin{corollary}\label{orbits_isotropic}
  With the notation as in  Theorem \ref{isotropic_submodule_bases}
  $d(Z):=d_1\dots d_r$ is uniquely determined up to units.
  
Two primitive totally isotropic submodules
$Z_1,Z_2$ of $\Lambda$ are in the same $Sp(\Lambda)$-orbit  if and only
if they have the same rank and
$d(Z_1)R^\times= d(Z_2)R^\times$ holds. 

If $D^2$ is the determinant of $\Lambda$ and $d\mid D$,
there exists an $Sp(\Lambda)$-orbit of primitive totally isotropic
submodules $Z=Z_d$ of $\Lambda$ of rank $u$ with $d=d(Z)$ if and only if one has $d
\mid N^u$ and $\frac{D}{d}\mid N^{m-u}$.

In particular, if $N\in R^\times$ holds, there is only one
$Sp(\Lambda)$-orbit of primitive totally isotropic submodules of rank
$u$ for each $u\le m=\rk(\Lambda)$.
\end{corollary}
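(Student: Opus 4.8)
The plan is to reduce everything to Theorem \ref{isotropic_submodule_bases} and Lemma \ref{linear_algebra}. First, for the well-definedness of $d(Z)=d_1\cdots d_r$ up to units: Theorem \ref{isotropic_submodule_bases} furnishes a basis $(e_1,\dots,e_r)$ of $Z$ together with $f_1,\dots,f_r$ spanning a totally isotropic module, with $\langle e_i,f_j\rangle=d_i\delta_{ij}$, $d_i\mid N$, and $\Lambda=M(Z)\perp\Lambda'$. The orthogonal complement $\Lambda'$ is again a lattice with an alternating form of square free level dividing $N$, and $\det(\Lambda)=d_1^2\cdots d_r^2\cdot\det(\Lambda')$ up to squares of units; since $N$ is square free, $\det(\Lambda')$ and the product $d_1\cdots d_r$ are each determined by the isometry classes involved, and by Lemma \ref{linear_algebra} applied to $M(Z)$ the product $d(Z)=d_1\cdots d_r$ depends only on $\det(M(Z))$, not on the chosen basis. (Alternatively one observes directly that $d_i R$ is the $i$-th elementary divisor of the pairing of $Z$ against $\Lambda$, intrinsically attached to $Z$.) The divisibility $d(Z)\mid N^r$ is then immediate since each $d_i\mid N$.

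Next, the orbit equivalence. If $Z_1,Z_2$ lie in one $Sp(\Lambda)$-orbit they obviously have the same rank and the same $d(\cdot)$, since an isometry carries an adapted basis of $Z_1$ to one of $Z_2$. Conversely, suppose $\rk Z_1=\rk Z_2=r$ and $d(Z_1)R^\times=d(Z_2)R^\times$. Apply Theorem \ref{isotropic_submodule_bases} to each: we get orthogonal splittings $\Lambda=M(Z_j)\perp\Lambda'_j$ with adapted bases. The elementary divisor tuples $(d_1,\dots,d_r)$ for $Z_1$ and for $Z_2$ need not literally coincide, but here I would invoke Lemma \ref{linear_algebra}: since the level is square free and only the product $d(Z_j)$ is prescribed, $M(Z_1)$ and $M(Z_2)$ are isometric, and moreover $\det(\Lambda'_1)R^\times=\det(\Lambda'_2)R^\times$, so by Lemma \ref{linear_algebra} again $\Lambda'_1\cong\Lambda'_2$. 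Piecing the two isometries together via the orthogonal decompositions yields an isometry of $\Lambda$ carrying $M(Z_1)$ to $M(Z_2)$, and after adjusting within $M(Z_2)$ (again using Lemma \ref{linear_algebra} to rearrange the $e_i$'s) one that sends $Z_1$ to $Z_2$. The one point requiring care is matching the $f_i$'s, but since any two choices of the $f_i$ completing a given $(e_i)$ to a split orthogonal system differ by an element of $Sp(\Lambda')\ltimes(\text{transvections fixing }Z_1)$, this is absorbed.

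Finally, existence of an orbit $Z_d$ of rank $u$ with prescribed $d=d(Z)$, $d\mid D$: one direction is forced, $d\mid N^u$ from $d=d_1\cdots d_u$ with each $d_i\mid N$, and $\frac{D}{d}\mid N^{m-u}$ by the symmetric statement applied to $\Lambda'$, whose determinant is $(D/d)^2$ and which has rank $2(m-u)$. For the converse I would construct $Z_d$ explicitly: take an ordered para-symplectic basis of $\Lambda$ with elementary divisors $\delta_1\mid\dots\mid\delta_m$, $\prod\delta_i=D$ (possible by Lemma \ref{linear_algebra}, which lets us prescribe the $\tilde d_i$ subject only to $\tilde d_i\mid N$ and $\prod\tilde d_i=D$ — and the constraints $d\mid N^u$, $(D/d)\mid N^{m-u}$ are exactly what is needed to factor $D$ as a product of $m$ divisors of $N$ so that some $u$ of them multiply to $d$), and let $Z_d$ be the span of the corresponding $e_i$'s. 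The main obstacle I anticipate is the combinatorial bookkeeping in this last step: translating "$d\mid N^u$ and $D/d\mid N^{m-u}$" into the existence of a valid distribution of the prime powers of $D$ among $m$ slots each of exponent $\le 1$ at every prime (here square-freeness of $N$ is essential), and then checking that the resulting $Z_d$ is primitive and totally isotropic — both of which are clear from the para-symplectic basis construction, but need to be stated. Uniqueness of the orbit then follows from the equivalence criterion already proved.
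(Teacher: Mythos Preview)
Your proposal is correct and follows precisely the route the paper intends: the paper's own proof reads in full ``Obvious.'', so what you have written is exactly the unpacking of Theorem \ref{isotropic_submodule_bases} and Lemma \ref{linear_algebra} that the authors leave to the reader. One small sharpening: in the orbit-equivalence step you can avoid the slightly vague ``after adjusting within $M(Z_2)$'' by noting that Lemma \ref{linear_algebra} already keeps the span of the $e_i$ fixed when it changes basis, so choosing matching tuples $(\tilde d_1,\dots,\tilde d_r)$ on both sides immediately gives an isometry $M(Z_1)\to M(Z_2)$ carrying $Z_1$ to $Z_2$.
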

\begin{proof}
  Obvious.
\end{proof}

\begin{corollary}\label{cusps_cosets_isotropic} Let $\Lambda$ be a
  lattice on $V$ of rank $2m$ and square free level $N$.
  \begin{enumerate}
  \item Let $U$ be a totally isotropic subspace of $V$ of dimension $u$, let
$P_U=P_u=\{g\in Sp(V) \mid gU=U\}$ be the maximal parabolic subgroup of
$Sp(V)$ fixing $U$. Let $\{g_1,\ldots,g_t\}$ denote a set of
representatives of the double cosets $Sp(\Lambda) g P_U$ with $g \in Sp(V)$.

Then the $Z_i=\Lambda \cap g_iU$ are a set of representatives of the $Sp(\Lambda)$-orbits of primitive  totally isotropic
submodules of rank $u$ of $\Lambda$.
\item Let $R=\Z$ and $F=\Q$ , let the determinant of $\Lambda$ be  $D^2=\prod_{p\mid
    N}p^{2\ell_p}$.

Then the number  of
boundary components of type ${\mathfrak H}_{m-u}$ in the Satake compactification  (or cusps in the sense of \cite{ibu_alternating})
of ${\mathfrak H}_m$
for $Sp(\Lambda)$ is equal to

  \begin{equation*}
    \prod_{p\mid N}(\min(u,m-u,\ell_p,m-\ell_p)+1).
  \end{equation*}
  In particular for $u=m$ there is only one zero dimensional cusp.
 \end{enumerate}
\end{corollary}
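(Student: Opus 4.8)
The plan is to reduce both parts to the orbit classification already established. For part (a), recall that the boundary components of $\mathfrak H_m$ of corank $u$ correspond bijectively to the totally isotropic $u$-dimensional subspaces $gU$ of $V$, and two of them give $Sp(\Lambda)$-equivalent boundary components precisely when they lie in one $Sp(\Lambda)$-orbit, i.e.\ in one double coset $Sp(\Lambda)gP_U$. The key observation is that the assignment $gU\mapsto \Lambda\cap gU$ sends a totally isotropic subspace to a \emph{primitive} totally isotropic submodule of $\Lambda$ of the same rank $u$ (primitivity being automatic from $\Lambda\cap gU=(\Lambda\otimes F)\cap gU$ intersected back with $\Lambda$), and conversely every primitive totally isotropic submodule $Z$ of rank $u$ spans such a subspace $U_Z:=ZF$, with $\Lambda\cap U_Z=Z$. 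These two maps are mutually inverse and $Sp(\Lambda)$-equivariant, so they descend to a bijection between $Sp(\Lambda)$-orbits of totally isotropic $u$-subspaces and $Sp(\Lambda)$-orbits of primitive totally isotropic rank-$u$ submodules. Since $Sp(V)$ acts transitively on the totally isotropic $u$-subspaces (all such $gU$ fill out one $Sp(V)$-orbit), the orbits of the former are exactly the double cosets $Sp(\Lambda)gP_U$, and this gives the claim that the $Z_i=\Lambda\cap g_iU$ are a full, irredundant set of representatives.

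For part (b) I would simply count the orbits described in part (a) using Corollary \ref{orbits_isotropic}. Over $R=\Z$, $F=\Q$, with $D^2=\prod_{p\mid N}p^{2\ell_p}$ and $N$ squarefree, Corollary \ref{orbits_isotropic} says the $Sp(\Lambda)$-orbits of primitive totally isotropic submodules of rank $u$ are indexed by the divisors $d\mid D$ satisfying $d\mid N^u$ and $D/d\mid N^{m-u}$. Writing $d=\prod_{p\mid N}p^{a_p}$ with $0\le a_p\le \ell_p$, these two divisibility conditions read $a_p\le u$ and $\ell_p-a_p\le m-u$ for each $p$, i.e.\ $\max(0,\ell_p-(m-u))\le a_p\le \min(\ell_p,u)$. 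The number of admissible $a_p$ for a fixed $p$ is therefore $\min(\ell_p,u)-\max(0,\ell_p-(m-u))+1$, and an elementary case check shows this equals $\min(u,m-u,\ell_p,m-\ell_p)+1$: indeed $\min(\ell_p,u)-\max(0,\ell_p+u-m)=\min(\ell_p,\,u,\,m-\ell_p,\,m-u)$ in all four sign regimes. Multiplying over $p\mid N$ gives the stated product; the factor is automatically $1$ for primes not dividing $N$, so the product really is over $p\mid N$ only. For $u=m$ the bound $\min(u,m-u,\ell_p,m-\ell_p)=0$ for every $p$, so the product is $1$ and there is a single zero-dimensional cusp, as claimed.

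The only genuinely substantive point is the equivariant bijection in part (a) — in particular verifying that $\Lambda\cap gU$ is primitive and that passing to the rational span and back recovers it, and that $Sp(V)$ does act transitively on totally isotropic $u$-subspaces over a field (Witt's theorem for alternating forms). Everything after that is the bookkeeping identity $\min(\ell_p,u)-\max(0,\ell_p+u-m)+1=\min(u,m-u,\ell_p,m-\ell_p)+1$, which I would dispatch by splitting into the cases $\ell_p+u\le m$ versus $\ell_p+u\ge m$ and, within each, $\ell_p\le u$ versus $\ell_p\ge u$. I expect the write-up to be short once the bijection of part (a) is cleanly stated.
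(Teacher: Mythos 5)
Your proposal is correct and follows essentially the same route as the paper: part (a) via the $Sp(\Lambda)$-equivariant bijection between totally isotropic $u$-subspaces of $V$ and primitive totally isotropic rank-$u$ submodules of $\Lambda$ together with the transitivity of $Sp(V)$, and part (b) by counting the admissible values of $d(Z)$ furnished by Corollary \ref{orbits_isotropic}. The only cosmetic difference is that the paper first reduces to a single prime $N=p$ by strong approximation before counting exponents, whereas you factor the divisor conditions over $p\mid N$ directly; your identity $\min(\ell_p,u)-\max(0,\ell_p+u-m)+1=\min(u,m-u,\ell_p,m-\ell_p)+1$ is exactly the paper's case analysis.
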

\begin{proof}
  By the previous corollary the totally isotropic subspaces of $V$ of
  fixed dimension are in a single $Sp(V)$-orbit, and assertion a)
  follows.
  For b), the number of boundary components of type ${\mathfrak H}_{m-u}$ of ${\mathfrak H}_m$
for $Sp(\Lambda)$  equals the number of
double cosets $Sp(\Lambda) g P_u$.

By the strong approximation theorem for $Sp(V)$ over $\Q$ it 
is enough to prove the assertion for the case that $N=p$ is a prime.
If (using the notation of Theorem \ref{isotropic_submodule_bases}) the determinant of
  the nondegenerate  module $M(U\cap \Lambda)$ is denoted by
  $p^{2s}$, the $Sp(\Lambda)$-orbit of $U$ is determined by $s$ by
  Corollary \ref{orbits_isotropic}, so we have to count the possible
  values of $s$. One must
  have $0\le s\le u$ and $s \le \ell_p$, and all  values of
  $s$ with $0\le s \le \min(u, \ell_p)$ occur if $\ell_p\le m-u$
  holds. In this case we have $\min(u,m-u,\ell_p,m-\ell_p)= \min(u,
  \ell_p)$, so the assertion is true in this case. If $\ell_p>m-u$ 
  holds, we must have $\ell_p-s \le m-u$, hence $s \ge \ell_p-(m-u)$
  and $\ell_p-(m-u)\le s \le \min(\ell_p,u)$, and all these value of
  $s$ occur, so that we obtain
  $1+m-\max(\ell_p,u)=1+\min(m-\ell_p,m-u)=1+\min(m-\ell_p,m-u,\ell_p,u)$
  values, which proves the assertion.
\end{proof}
\begin{remark}
  \begin{enumerate}
  \item   Alternatively we can use Proposition 3.6 of \cite{ibu_alternating}
  to determine the number of boundary components and
count the $\theta$- admissible  $\sigma \in S_r$
defined in \cite[Definition 3.3]{ibu_alternating} to obtain the
formula for the number of boundary components; notice, however, that
the condition (2) in that definition 
should also contain that one has $b_{t+1}(\sigma,\theta)=\{i \in X\mid
t+1 \le i \le t+1+\kappa'\}$  for some $\kappa'$
if $b_{t+1}(\sigma,\theta)\ne \emptyset$.
\item If we identify the elements of $Sp(V)$ with their matrices with
  respect to the symplectic basis $\{e_i,d_i^{-1}f_i\}$, the group
  $P_u$ becomes the parabolic subgroup of $Sp_m(\Q)$ with a block
  $0_{u,2m-u}$ in the lower left corner and we obtain representatives
  of the double cosets $\Gamma^{(m)}(\bigl(
    \begin{smallmatrix}
      d_1&&\\
      &\dots&\\
      &&d_m
    \end{smallmatrix}\bigr)) g P_u$.
    In particular, for $u=m-1$ our formula for the number of double 
    cosets coincides with the result of \cite{marschnerdiss}.
    \item By the corollary, as in the case of the full modular group
      the fact that there is only one zero dimensional cusp implies
      that a modular form for a paramodular group $\Gamma^{(n)}(T)$ of
      square free 
      matrix level $T$ is a cusp form if and only if its Fourier
      expansion has nonzero Fourier coefficients only at nondegenerate matrices.
  \end{enumerate}
\end{remark}
For later use we construct representatives of the orbits in the corollary. 
\begin{lemma}\label{matrixrepresentatives_cusps}
With notations as before let ${\mathcal B}=\{e_1,\ldots,e_m,f_1,\ldots f_m\}$ be a
para-symplectic basis of $\Lambda$ of elementary divisor type with $\langle
e_i,f_j\rangle={d_i}\delta_{ij}$ and
$d_i\mid d_{i+1}$, let $v_i=d_i^{-1}f_i$. For $0\le r<m$ let $U_0=U_0(r)=\sum_{i=r+1}^m\Q
e_i$ and $M(U_0\cap\Lambda)=U_0\cap\Lambda+\sum_{i=r+1}^m\Z f_i$, let
$d$ be such that there exists a totally 
isotropic subspace $U$of $V$ of dimension $u=m-r$ with $d(U\cap
\Lambda)=d$. Let $\tilde{d}_1,\ldots,\tilde{d}_m$ be such that $\tilde{d}_i
\mid \tilde{d}_{i+1}$ for $1\le i \le r-1$ and for $r+1\le i \le m-1$
with $d=\prod_{i=r+1}^m\tilde{d}_i$ and $\prod_{i=1}^m\tilde{d}_i=\prod_{i=1}^md_i$.  

Then there is $g =g({\mathcal B},d,u) \in Sp(V)$ with $d(gU_0\cap \Lambda)=d$ such that the matrix $\gamma$ of $g$ with respect
to the basis $\{e_1,\ldots,e_m,v_1,\ldots v_m\}$ of $V$ is $\gamma
=\gamma({\mathcal B},u,d)=\bigl(
\begin{smallmatrix}
  S&0\\0&{}^tS^{-1} 
\end{smallmatrix}\bigr)$, where $S=S({\mathcal B},u,d)\in SL_m(\Z)$ is congruent to a
permutation matrix $S_p=S_p({\mathcal B},u,d)$ modulo each prime $p\mid N$ (where we admit  possible
entries $-1$ in the permutation matrix). Moreover, $g$ can be chosen such that
the $\tilde{e}_i:=ge_i,\tilde{f}_i:=\tilde{d}_igv_i$ for $1\le i\le m$ form a basis of
$\Lambda$, with the last $u$ pairs $\tilde{e}_i,\tilde{f}_i$ forming a
basis of 
$M(gU_0\cap \Lambda)$.
The matrices $\gamma({\mathcal B},u,d)$ with $d\mid D, d\mid
N^{u},\frac{D}{d}\mid N^{m-u}$ form a set of representatives of the
double cosets $Sp(\Lambda) \tilde{\gamma}P_u$ and hence of the set of boundary
components of type ${\mathfrak H}_{m-u}$ of the Siegel upper half
space ${\mathfrak H}_m$.

Moreover, the totally isotropic submodules $g({\mathcal B},u,d)U_0(r)\cap
\Lambda$ of $\Lambda$ form a set of representatives of the
$Sp(\Lambda)$-orbits of primitive totally isotropic submodules of $\Lambda$.
\end{lemma}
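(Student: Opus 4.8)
The plan is to build the element $g$ prime by prime, relying on Corollary \ref{orbits_isotropic} to know exactly which invariants $d$ can occur and on Theorem \ref{isotropic_submodule_bases} to know that a compatible basis adapted to $gU_0\cap\Lambda$ exists. First I would reduce to the local situation: since the level $N$ is square free, $N=\prod_{p\mid N}p$, and by the strong approximation theorem for $Sp(V)$ (as already invoked in the proof of Corollary \ref{cusps_cosets_isotropic}) it suffices to specify, for each prime $p\mid N$, a local double coset representative in $Sp_m(\Q_p)$, and then patch these together with the identity at all other places to get a global $g\in Sp(V)$ whose matrix with respect to $\{e_i,v_i\}$ lies in $SL_m(\Z)$ (block-diagonal of the stated shape $\bigl(\begin{smallmatrix}S&0\\0&{}^tS^{-1}\end{smallmatrix}\bigr)$). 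The point of taking $g$ of this ``Levi'' form is that such $g$ automatically sends the totally isotropic $U_0(r)=\sum_{i>r}\Q e_i$ to another totally isotropic subspace, and the invariant $d(gU_0\cap\Lambda)$ is then governed purely by how $S$ mixes the basis vectors $e_i$ having different $\langle e_i,f_i\rangle=d_i$.

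Next I would make the local choice explicit. Fix $p\mid N$ and write $\ell_p=\ord_p(D)$, so that among $d_1\mid\cdots\mid d_m$ exactly $\ell_p$ of the $d_i$ are divisible by $p$, namely $d_{m-\ell_p+1},\dots,d_m$. Given $d$ with $d\mid D$, $d\mid N^u$, $\tfrac{D}{d}\mid N^{m-u}$, the $p$-part $p^{s_p}$ of $d$ satisfies $\max(0,\ell_p-(m-u))\le s_p\le\min(u,\ell_p)$ by Corollary \ref{orbits_isotropic} (this is exactly the range of $s$ appearing in the proof of Corollary \ref{cusps_cosets_isotropic}). I then choose the permutation matrix $S_p$ (with entries in $\{0,\pm1\}$) to be the permutation that swaps a chosen set of $s_p$ indices among the last $u$ positions $\{r+1,\dots,m\}$ with $s_p$ of the $p$-divisible positions $\{m-\ell_p+1,\dots,m\}$, and fixes everything else; concretely one transposes pairs $(i,j)$ with $i\le r<j$, one index $p$-divisible and one not, arranging that after applying $S_p$ exactly $s_p$ of the slots $r+1,\dots,m$ carry a $d_i$ divisible by $p$. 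Because at a given prime we only care about the $p$-adic valuations $\ord_p(d_i)\in\{0,1\}$, the invariant $d(gU_0\cap\Lambda)$ has $p$-valuation equal to the number of $p$-divisible $d_i$'s landing in the last $u$ coordinates, which is $s_p$ by construction. The prescribed congruence $S\equiv S_p\pmod p$ for every $p\mid N$ then pins down $S\in SL_m(\Z)$ up to a matrix congruent to the identity mod $N$ (use that $SL_m$ satisfies strong approximation / the reduction map $SL_m(\Z)\to\prod_{p\mid N}SL_m(\Z/p)$ is surjective), and we may additionally arrange $\det S=1$ by adjusting an irrelevant sign, so that $\gamma=\bigl(\begin{smallmatrix}S&0\\0&{}^tS^{-1}\end{smallmatrix}\bigr)\in Sp_m(\Z)\subseteq Sp(V)$.

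Having produced $g=g(\mathcal B,u,d)$, I would verify the basis statement. Since $g\in Sp(V)$, the submodule $gU_0(r)\cap\Lambda$ is primitive totally isotropic of rank $u$ with $d(gU_0(r)\cap\Lambda)=d$ by construction; by Theorem \ref{isotropic_submodule_bases} (its ``maximal'' variant applied after first completing $gU_0\cap\Lambda$, or more directly the orthogonal-splitting statement) there is a para-symplectic basis of $\Lambda$ whose last $u$ pairs span $M(gU_0\cap\Lambda)$, and by construction $ge_i$ for $i>r$ together with $\tilde d_i\,gv_i$ for a suitable renumbering $\tilde d$ give such a basis; extending over the first $r$ coordinates by the same Levi shape (where $S$ acts by a permutation mod each $p\mid N$, hence preserves $\Lambda$ locally everywhere) shows $\{ge_i,\tilde d_i gv_i\}_{1\le i\le m}$ is a basis of $\Lambda$, with $\prod\tilde d_i=\prod d_i$ and $d=\prod_{i>r}\tilde d_i$ as required. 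Finally, to see that the $\gamma(\mathcal B,u,d)$ form a complete set of double coset representatives: by Corollary \ref{cusps_cosets_isotropic}(a) the double cosets $Sp(\Lambda)gP_u$ correspond bijectively to $Sp(\Lambda)$-orbits of primitive totally isotropic rank-$u$ submodules, which by Corollary \ref{orbits_isotropic} are classified by the invariant $d$ ranging over exactly the divisors with $d\mid D$, $d\mid N^u$, $\tfrac{D}{d}\mid N^{m-u}$; our $g(\mathcal B,u,d)$ realizes each such $d$ exactly once, giving the claimed bijection with boundary components of type $\mathfrak H_{m-u}$, and letting $u$ vary gives representatives of all $Sp(\Lambda)$-orbits of totally isotropic submodules.

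\textbf{Main obstacle.}
I expect the genuinely delicate point to be the bookkeeping that ensures the single global $S\in SL_m(\Z)$ simultaneously realizes the correct $p$-adic behaviour at \emph{every} $p\mid N$ while remaining an honest element of $SL_m(\Z)$ (not merely of $GL_m(\Z_{(p)})$ for each $p$), and that the resulting $\tilde d_i$ can be chosen with $\tilde d_i\mid\tilde d_{i+1}$ on each of the two index blocks $1\le i\le r$ and $r+1\le i\le m$ as asserted — i.e.\ that the orbit invariant $d$ refines to an ordered elementary-divisor string compatibly on both blocks. This is where one must be careful that the local permutations $S_p$ for different $p$ commute up to something congruent to $1$ and that their product still normalizes $\Lambda\otimes\Z_p$ at each $p$; once the square-freeness of $N$ is used (so that only valuations $0$ and $1$ enter, and $d\Lambda^\#\cap\tfrac1p\Lambda\subseteq\Lambda$ as in the proof of Theorem \ref{isotropic_submodule_bases}), these compatibilities become a finite check, but it is the heart of the argument.
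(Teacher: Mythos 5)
Your proposal is correct and follows essentially the same route as the paper: choose at each prime $p\mid N$ a (signed) permutation matrix $S_p$ placing the right number of $p$-divisible elementary divisors in the last $u$ slots (with the divisibility ordering within the two index blocks), patch these by strong approximation for $SL_m$ into a single $S\in SL_m(\Z)$ congruent to $S_p$ modulo each $p\mid N$, and take $g$ with block-diagonal matrix $\bigl(\begin{smallmatrix}S&0\\0&{}^tS^{-1}\end{smallmatrix}\bigr)$, the completeness of the resulting set of representatives then coming from Corollaries \ref{orbits_isotropic} and \ref{cusps_cosets_isotropic}. The compatibility issues you flag as the "main obstacle" are resolved exactly as you suggest, by imposing the congruences prime by prime, which is all the paper's own (very brief) proof does.
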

\begin{proof}
For $p\mid N$ there is a permutation $\sigma_p \in S_m$ such that
$d((\sum_{i=r+1}^m\Q e_{\sigma_p(i)})\cap \Lambda)\in d\Z_p^\times$, and we
may choose the permutation so that for $1\le i \le r-1$ and
for $r+1\le i \le m-1$ one has $\langle
e_{\sigma_p(i)},f_{\sigma_p(i)}\rangle \Z_p \supseteq \langle
e_{\sigma_p(i+1)},f_{\sigma_p(i+1)}\rangle \Z_p$ .
We let
$S_p \in SL_m(\Z)$ be the associated matrix (with an entry $-1$ if
${\rm sgn}(\sigma_p)=-1$). If $N=p$ is a prime we set $S=S_p$, and
with $\gamma$ and $g$ as in the assertion we have
$gU_0=\sum_{i=r+1}^m\Q e_{\sigma_p(i)} $ and are done.
For composite $N$ we have different permutations for its prime
factors and have to combine them into a single $S$. Indeed, by the strong approximation theorem for
$SL_m$  we find $S\in SL_m(\Z)$ congruent to $S_p$ modulo $p$ for each
$p\mid N$. Then $g\in Sp(V)$ with matrix $\bigl(
\begin{smallmatrix}
  S&0\\0&{}^tS^{-1} 
\end{smallmatrix}\bigr)$ with respect
to the basis $\{e_1,\ldots,e_m,v_1,\ldots v_m\}$ of $V$ is as desired.
\end{proof}
\begin{remark} 
 For $r=0$ with $d=D$ we may set $S=1_m$, in the trivial case $r=m$, where
 $U_0=\{\bf 0\}$, we set $d=1$ and
 $S=1_m$.
\end{remark}
\section{Hecke algebras for paramodular groups}\label{sec3}
We want to study Hecke algebras associated to paramodular groups and
slightly more general spaces of double cosets $\Gamma g \Gamma'$ where
$\Gamma, \Gamma'$ are possibly different paramodular groups of the
same degree.

We notice first that by \cite[4.7]{borel_jacquet} for any group $\Gamma
\subseteq Sp_n(\Q)$ which can be obtained as $\Gamma= Sp_n(\Q)\cap
\prod_pK_p$, where the $K_p$ are compact open subgroups of
$Sp_n(\Q_p)$  the Hecke algebra ${\mathcal H}(Sp_n(\Q),\Gamma)$
factors into a restricted tensor product of the local Hecke algebras
${\mathcal H}_p(Sp_n(\Q_p),K_p)$. This is proved using the strong
approximation theorem for the group $Sp_n$, and the argument carries
over to the situation of spaces of double cosets $\Gamma g \Gamma'$,
where $\Gamma,\Gamma'$ are groups of this type. In particular, the
result holds for the case of a pair of paramodular groups.

We can hence restrict attention to the study of the local double
coset spaces. We therefore let now $R$ be a
complete discrete valuation ring with prime element $p$ and field of
fractions $F$ and $V$ a 
vector space of dimension $2n$ over $F$ equipped with a nondegenerate
alternating bilinear form $\langle, \rangle$. A lattice $\Lambda$ on
$V$ of level dividing $p$ (also called a $p$-elementary lattice in the
sequel)  has then for some uniquely determined integers $a,b$ with 
$a+b=n$ a para-symplectic basis of
elementary divisor type 
$\{e_1,\ldots,e_n,f_1,\ldots f_n\}$ with $\langle e_i,f_i\rangle =1$
for $1\le i \le a$, $\langle e_i,f_i\rangle =p$ for $a+1\le i \le  n$. We
say then that the canonical decomposition of $\Lambda$ has $a$ unimodular hyperbolic planes
$Re_i+Rf_i$ and $b$ $p$-modular hyperbolic planes.
\begin{proposition}\label{hecke_latticeversion}%
  Let $\Lambda$ be a lattice of level dividing $p$ on $V$
and $e_1,\ldots,e_n,f_1,\ldots,f_n$ a para-symplectic basis of $\Lambda$ with  $\langle e_i,f_j \rangle
 =\delta_{ij}$ for $1\le i \le a$, $\langle e_i,f_j\rangle
 =p\delta_{ij}$ for $a+1 \le i \le n=a+b$.
  For $0\le a'\le n$ and $b'=n-a'$ let   \begin{equation*}\begin{split}
    L(r_+&,r_-,\mu_1,\ldots,\mu_n):=\\
      & \Z_pp^{\mu_{a+1}}e_{a+1}+
    \Z_pp^{-\mu_{a+1}-1}f_{a+1}+\dots+\Z_pp^{\mu_{a+r_+}}e_{a+r_+}+
    \Z_pp^{-\mu_{a+r_+}-1}f_{a+r_+}\\&
  +\Z_pp^{\mu_{r_-+1}}e_{r_-+1}+\Z_pp^{-\mu_{r_-+1}}f_{r_-+1}+\dots+\Z_pp^{\mu_a}e_a+\Z_pp^{-\mu_a}f_a\\
& + \Z_pp^{\mu_1}e_1+
\Z_pp^{-\mu_1+1}f_1+\dots+\Z_pp^{\mu_{r_-}}e_{r_-}+\Z_pp^{-\mu_{r_-}+1}f_{r_-}\\
&+\Z_pp^{\mu_{a+r_++1}}e_{a+r_++1}+\Z_pp^{-\mu_{a+r_++1}}f_{a+r_++1}+\dots+\Z_pp^{\mu_n}e_n+\Z_pp^{-\mu_n}f_n\\&
\\& =\Z_p\tilde{e}_1+\Z_p\tilde{f}_1+\dots+\Z_p\tilde{e}_n+\Z_p\tilde{f}_n,
  \end{split}
\end{equation*}
with $0\le r_- \le \min(a,b'), 0\le r_+\le \min(a',b)$,\\ $b'=b-r_++r_-,a'=a-r_-+r_+$, \\$1\le\mu_1\dots\le  \mu_{r_-}$, $0\le
\mu_{r_-+1}\dots\le\mu_a$,\\ $0\le\mu_{a+1}\le\dots\le\mu_{a+r_+}$,
$0\le\mu_{a+r_++1}\le\dots\le\mu_n$.
\begin{enumerate}
  \item   Let $L,L'$ be
    $p$-elementary lattices  on $V$ which are
 isometric.\\
 Then $L'$ and $L$ are in the same
 $Sp(\Lambda)$-orbit if and only if they have the same elementary
 divisors with  respect to both $\Lambda$ and the dual lattice
 $\Lambda^\#$ of $\Lambda$.
\item
The orbits in a) of $L$ with $0\le a'\le n$ unimodular hyperbolic
planes in the canonical decomposition are
represented by the lattices
$ L(r_+,r_-,\mu_1,\ldots,\mu_n)$ with $r_+,r_-,\mu_1,\ldots,\mu_n$ as above.
\end{enumerate}
\end{proposition}
\begin{proof}
 For a) we denote by $a',b'$ the (common) numbers of unimodular respectively
 $p$-modular hyperbolic planes in the canonical decompositions of $L$
 and $L'$.
 It is well known (see \cite{garrettbook,frisch,abramenko_nebe,rsp_hyperbolic}) that there exist a
 basis $e_1,\ldots,e_n$, $f_1,\ldots,f_n$ of $\Lambda$ with $\langle
 e_i,e_j\rangle =\langle f_i,f_j\rangle =0$, $\langle e_i,f_j \rangle
 =\delta_{ij}$ for $1\le i \le a$, $\langle e_i,f_j\rangle
 =p\delta_{ij}$ for $a+1 \le i \le n=a+b$ and non negative integers $\mu_1, \ldots
 \mu_n,\nu_1,\ldots,\nu_n$ such that the $p^{\mu_i}e_i,p^{-\nu_i}f_i$ form a
 basis of $L$. 
Since $\Lambda$ and $L$ are both assumed to be $p$-elementary, we
must obviously have $\nu_i\in \{\mu_i,\mu_i-1\}$ 
for $1 \le i \le a$ and
$\nu_i \in \{\mu_i,\mu_i+1\}$ for $a<i\le n$; moreover, if $r_+$ is
the number of 
of indices $i$ with $\nu_i=\mu_i+1$ and $r_-$  the number of
indices $i$ with $\nu_i=\mu_i-1$,  we have  $b'=b-r_++r_-$ and $a'=a-r_-+r_+$.

In other words: There are precisely
$r_-$ among the subspaces $\Q_pe_i+\Q_pf_i$ for which the intersection
with $\Lambda$ is unimodular and the intersection with $L$ is
$p$-modular and for $r_+$ such subspaces the roles of
$\Lambda, L$ are reversed. We say that the $e_i,f_i$ are a
para-symplectic elementary divisor basis for the pair $\Lambda, L$
with exponent pairs  $(\mu_i,\nu_i)$.%

It is clear that $L,L'$ in the same $Sp(\Lambda)$-orbit
have the same elementary divisors with respect to $\Lambda$ and with
respect to $\Lambda^\#$. On the other hand, assume that 
there are para-symplectic elementary divisor bases
$\{e_i,f_i\}$ with exponent pairs $(\mu_i, \nu_i)$ as above for
$\Lambda,L$ and $\{e_i',f_i'\}$ with the same exponent pairs  $(\mu_i'=\mu_i,
\nu_i'=\nu_i)$ for  $\Lambda,L'$.

One has then a $\phi \in Sp(\Lambda)$ with
$\phi(e_i)=e_i',\phi(f_i)=f_i'$ for $1\le i\le n$, and this $\phi$
maps $L$ onto $L'$, i.e, $L$ and $L'$ are in the same orbit.

\medskip
We are therefore left with the task of proving that the elementary
divisor condition of the assertion implies that if one has 
para-symplectic elementary divisor bases 
$\{e_i,f_i\}$ with exponent pairs $(\mu_i, \nu_i)$ for
$\Lambda,L$ and $\{e_i',f_i'\}$ with exponent pairs $(\mu_i',\nu_i')$
for $\Lambda,L'$, those pairs must be the same $(\mu_i'=\mu_i,
\nu_i'=\nu_i)$.
Notice that this will imply in particular that two
elementary divisor bases for a fixed
$L$ must have the same exponent pairs $(\mu_i, \nu_i)$.

Let $\mu$ be maximal such that the number of  pairs $(\mu_i, \nu_i)$
with $\mu_i=\mu, \nu_i=\nu$ is not equal to the number of  pairs $(\mu_i', \nu_i')$
with $\mu_i'=\mu, \nu_i'=\nu$ for one of the possible values $\nu=\mu-1,
\nu=\mu, \nu=\mu+1$ of $\nu_i, \nu_i'$. The pairs with $\nu_i=\mu-1$
contribute elementary divisors $p^{\mu},p^{-\mu+1}$ of $L$ in $\Lambda^\#$,
the pairs with $\nu_i=\mu$ contribute elementary divisors $p^\mu,
p^{-\mu}$ or $p^{\mu+1},p^{-\mu+1}$ of $L$ in $\Lambda^\#$, the pairs
with $\nu_i=\mu+1$ contribute elementary divisors $p^{\mu+1},p^{-\mu}$
of $L$ in $\Lambda^\#$, and analogously for the $\nu_i'$ and $L'$.

Since here the only contributions to elementary divisors $p^{-\mu-1}$
in $\Lambda$ come  
from pairs $(\mu,-\mu-1)$ the number of such pairs $(\mu_i,\nu_i)$ and
$(\mu_i',\nu_i')$ must be equal. These pairs contribute elementary
divisors $p^{\mu+1}$ in $\Lambda^\#$, with the only other contributions
coming from pairs $(\mu,-\mu)$ and pairs with higher first entry. The
latter ones occur in equal numbers for $L,L'$ by assumption, so the
number of pairs $(\mu_i=\mu,\nu_i=-\mu)$ and
$(\mu_i'=\mu,\nu_i'=-\mu)$ with elementary divisors
$p^{\mu+1},p^{-\mu+1}$ must also be equal.

We are then left with
the possibility that the number s of pairs $(\mu,-\mu)$ with
elementary divisors $p^{\mu},p^{-\mu}$ in $\Lambda^\#$ are different
for $L$ and $L'$, say there are more such pairs for $L$ than for
$L'$. Consequently 
we have at this level fewer elementary divisors
$p^{-\mu}$ in $\Lambda^\#$ for $L'$ than for $L$. But by our
considerations above levels $\mu-1$ and lower can not contribute such
elementary divisors, so we have a contradiction to the assumption that
$L$ and $L'$ have the same elementary divisors in $\Lambda$ and in
$\Lambda^\#$, and we have finished the proof of a).

Moreover, the proof of a) shows that the lattices $L(r_+,r_-,\mu_1,\ldots,\mu_n)$ in
b) are
indeed a set of representatives of the orbits. 
\end{proof}
\begin{theorem}\label{hecke_representatives_local}
With notations as in the proposition denote by $\Gamma_{a,b}$ the paramodular group ${\Gamma}(
 \bigl( \begin{smallmatrix}
   1_a&\\
&p1_b
  \end{smallmatrix}\bigr))\subseteq Sp_n(\Q_p)$ over the $p$-adic integers $\Z_p$, where $a$ is the number of
  unimodular hyperbolic planes in $\Lambda$ and $b$ the number of
  $p$-modular hyperbolic planes.

Then the double cosets $\Gamma_{a,b} \alpha \Gamma_{a',b'}$ with
$\alpha \in Sp_n(\Q_p)$ 
have as a set of representatives the block diagonal matrices
\begin{equation*}
D(r_+,r_-,\mu_1,\ldots,\mu_n)=
\begin{pmatrix}
  B(r_+,r_-,\mu_1,
  \ldots,\mu_n)&0_n\\
  0_n&{}^t(B(r_+,r_-,\mu_1,
\ldots,\mu_n))^{-1}
\end{pmatrix}
\end{equation*}

with $r_-,r_+$ and the $\mu_i$ as in the above proposition, where $B(r_+,r_-,\mu_1,
  \ldots,\mu_n)$ equals
%
  \begin{equation*}
  \begin{pmatrix}
    &&&&&&p^{\mu_1}&&&&&\\
&&&&&&&\ddots&&&&\\
&&&&&&&&&p^{\mu_{r_-}}&&&\\
&&&p^{\mu_{r_-+1}}&&&&&&&&&\\
&&&&\ddots&&&&&&&&\\
&&&&&p^{\mu_a}&&&&&&&\\
p^{\mu_{a+1}}&&&&&&&&&&&\\
&\ddots&&&&&&&&&&\\
&&p^{\mu_{a+r_+}}&&&&&&&&&\\
&&&&&&&&&&p^{\mu_{a+r_++1}}&&\\
&&&&&&&&&&&\ddots&\\
&&&&&&&&&&&&p^{\mu_n}
  \end{pmatrix}
\end{equation*}
with the rest of the entries being zero.
\end{theorem}
\begin{proof} We let $\Lambda, L$ and the basis $(e_1,\ldots, e_n,
  f_1,\ldots,f_n)$ of $\Lambda$ be as in the proof of the proposition
  and obtain  a standard symplectic basis $\{x_1=e_1,\ldots,x_n=e_n$, $y_1=p^{-1}f_1,\ldots,y_n=p^{-1}f_n\}$ of $V$.
Then with respect to this standard basis $Sp(\Lambda)$ has $\Gamma_{a,b}$
and $Sp(L)$ has $\Gamma_{a',b'}$ as its
group of matrices. The double cosets
$\Gamma_{a,b}\alpha\Gamma_{a',b'}$ correspond then bijectively to the
$Sp(\Lambda)$-orbits of lattices isometric to $L$ and are represented
by the matrices $  D(r_+,r_-,\mu_1,\ldots,\mu_n) $ which transform $L$ into the standard lattices given in
part b) of the proposition. 
\end{proof}
\begin{lemma}\label{transpose_integrality}
With notation as above and   $T:=\biggl(
\begin{smallmatrix}
1_{a'}&\\&p1_{b'}  
\end{smallmatrix}\biggr)$, 
$T':=\biggl(
\begin{smallmatrix}
 1_{a}&\\&p1_{b} 
\end{smallmatrix}\biggr)$ the matrix
\begin{equation*} T^{-1}\,{}^t\!B(r_+,r_,\mu_1,\dots,\mu_n) T'
  \end{equation*}
  has integral entries.

  In the case $a=a', b=b'$ we have with $r=r_-=r_+$
  \begin{align*}
    T^{-1}\,{}^t\!B(r,r,\mu_1,\dots,\mu_n) T %
        =B(r,r,&\mu_{a+1}+1,\dots,\mu_{a+r}+1,
                     \mu_{r+1},\dots,\mu_a,\\&\mu_1-1,\dots,\mu_r-1,\mu_{a+r+1},\dots,\mu_n).
  \end{align*}
\end{lemma}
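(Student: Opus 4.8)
The plan is to compute $T^{-1}\,{}^t\!B(r_-,r_+,\mu_1,\dots,\mu_n)\,T'$ entry by entry using the explicit block structure of $B$. Recall that $B=B(r_-,r_+,\mu_1,\dots,\mu_n)$ is a ``generalized permutation matrix'': each row and each column contains exactly one nonzero entry, a power $p^{\mu_i}$. Reading off the displayed matrix, the nonzero entry $p^{\mu_i}$ sits in row $\rho(i)$ and column $\kappa(i)$ for an explicit bijection; transposing interchanges the roles, so the nonzero entry of $^t\!B$ in row $\kappa(i)$ and column $\rho(i)$ is again $p^{\mu_i}$. Conjugating by the diagonal matrices $T',T^{-1}$ (whose diagonal entries are $1$ on the first $a$ resp. $a'$ slots and $p$ on the last $b$ resp. $b'$ slots) multiplies the $(\kappa(i),\rho(i))$-entry by $(T^{-1})_{\kappa(i)\kappa(i)}\cdot (T')_{\rho(i)\rho(i)}$, i.e. by $p^{\epsilon_i}$ with $\epsilon_i\in\{-1,0,1\}$ depending on whether the column index $\kappa(i)$ falls in the ``unimodular'' or ``$p$-modular'' range for $T$ and whether the row index $\rho(i)$ falls in the corresponding range for $T'$.

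The key bookkeeping step is to check that $\mu_i+\epsilon_i\ge 0$ for every $i$, which is exactly the assertion that the entries are integral. First I would read off from the definition of $B$ which block each index lies in: the indices $a+1,\dots,a+r_+$ index the $r_+$ planes where $\Lambda$ is $p$-modular but $L$ is unimodular, and for these the constraint on the $\mu$'s in Theorem~\ref{hecke_representatives_local}ii) is only $0\le\mu_{a+1}\le\dots\le\mu_{a+r_+}$; one checks that for these indices $\epsilon_i=0$, so integrality is immediate. The indices $1,\dots,r_-$ index the $r_-$ planes where $\Lambda$ is unimodular but $L$ is $p$-modular; for these the constraint is $1\le\mu_1\le\dots\le\mu_{r_-}$, and one checks $\epsilon_i=-1$, so $\mu_i-1\ge 0$ and integrality again holds. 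For the remaining indices (the ``stable'' unimodular planes $r_-+1,\dots,a$ and $p$-modular planes $a+r_++1,\dots,n$) the relevant product of diagonal factors works out to $\epsilon_i=0$, and there is nothing to check. Assembling these observations gives integrality of $T^{-1}\,{}^t\!B\,T'$.

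For the second formula, one specializes $a=a'$, $b=b'$, $r_-=r_+=r$, so $T=T'$ and $B$ and its transpose have the same block skeleton. Tracking the same $\epsilon_i$: the first $r$ slots pick up a factor $p^{-1}$, so $\mu_i\mapsto\mu_i-1$; the slots $a+1,\dots,a+r$ pick up a factor $p^{+1}$, so $\mu_i\mapsto\mu_i+1$; the middle unimodular slots $r+1,\dots,a$ and the tail slots $a+r+1,\dots,n$ are unchanged. One then checks that transposing $B$ and conjugating has, on the level of the underlying permutation, the effect of sending the block structure of $B(r,r,\dots)$ back to the block structure of a matrix of the same shape $B(r,r,\dots)$ but with the exponent sequence permuted into the order
\begin{equation*}
(\mu_{a+1}+1,\dots,\mu_{a+r}+1,\ \mu_{r+1},\dots,\mu_a,\ \mu_1-1,\dots,\mu_r-1,\ \mu_{a+r+1},\dots,\mu_n),
\end{equation*}
which is exactly the claimed identity. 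This last point — that $^t\!B$, after the diagonal conjugation, is literally of the form $B(r,r,\dstar)$ with the stated reordered exponents rather than merely conjugate to one — is the main obstacle, and it is settled by carefully matching the positions $(\rho(i),\kappa(i))$ of the nonzero entries of $B$ against those of $^t\!B$ and observing that the two index bijections differ precisely by the transposition that swaps the first $r$ block with the $(a+1,\dots,a+r)$ block while fixing everything else; the shape constraints $0\le\mu_{a+1}\le\dots\le\mu_{a+r}$ and $1\le\mu_1\le\dots\le\mu_r$ guarantee the reordered sequence still satisfies the monotonicity requirements of Theorem~\ref{hecke_representatives_local}ii), so it is a legitimate representative of the form $B(r,r,\dstar)$.
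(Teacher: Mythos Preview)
Your approach is exactly what the paper does (its entire proof reads ``This is easily checked, using $1\le \mu_1\le \dots\le\mu_{r_-}$''), and your bookkeeping is correct apart from one harmless slip: for the indices $a+1,\dots,a+r_+$ the factor is $\epsilon_i=+1$, not $0$ (you state it correctly in the second half when you say ``the slots $a+1,\dots,a+r$ pick up a factor $p^{+1}$''). Since $+1$ only helps integrality, the argument goes through unchanged.
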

\begin{proof}
  This is easily checked, using $1\le \mu_1\le \dots\le\mu_{r_-}$.
\end{proof}
\begin{corollary}
With notations as above assume $a=a',b=b',r:=r_-=r_+$, and write
$\Gamma=\Gamma_{a,b}$, let ${\mathcal H}(\Gamma)$ denote the Hecke
algebra generated by the $\Gamma$ double cosets as defined in
\cite{shimurabook}. For $j \in \N$ denote by $T(p^j)$ the sum of the
double cosets of the $D(r,r,\mu_1,\ldots,\mu_n)$ with $\mu_1+\dots
+\mu_n=j$. Then $T(p^j)$ is invariant under the involution of ${\mathcal
  H}(\Gamma)$ given by $\Gamma \alpha \Gamma \mapsto \Gamma
\alpha^{-1} \Gamma$.

Consequently, the subalgebra generated by the $T(p^j)$ is commutative.
\end{corollary}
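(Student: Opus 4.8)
The plan is to prove the two assertions in turn, deriving the commutativity from the involution-invariance of the $T(p^j)$ via the standard Hecke-algebra argument. The key input is Lemma \ref{transpose_integrality}, which in the case $a=a'$, $b=b'$ tells us exactly what happens to a block-diagonal representative under transpose-inverse conjugation by $T$.

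First I would unwind the involution. For $\alpha=D(r,r,\mu_1,\ldots,\mu_n)$, the inverse is again block-diagonal, with blocks $B^{-1}$ and ${}^tB$. The subtlety is that $\Gamma\alpha^{-1}\Gamma$ must be re-expressed in terms of a representative that is itself integral paramodular, i.e.\ of the form $D(r_-',r_+',\mu_1',\ldots)$ with the correct $T$-conjugation normalization; this is precisely where one uses the identity $\begin{pmatrix}1&0\\0&T\end{pmatrix}\hat\Gamma\begin{pmatrix}1&0\\0&T^{-1}\end{pmatrix}=\Gamma$ relating the two realizations, together with Lemma \ref{transpose_integrality}. Concretely, conjugating $\alpha^{-1}$ appropriately by $\diag(1_n,T,1_n,T^{-1})$-type matrices in $\Gamma$ (which are available since $T$ has the stated block form and these conjugations preserve $\Gamma_{a,b}$ when $a=a'$, $b=b'$) turns the ${}^tB^{-1}$ block into $T^{-1}\,{}^t\!B\,T$ inverted, and the explicit formula in Lemma \ref{transpose_integrality} shows this is again $B(r,r,\mu')$ for a permuted tuple $\mu'$ obtained from $(\mu_1,\ldots,\mu_n)$ by the operations $\mu_{a+i}\mapsto \mu_{a+i}+1$ for $1\le i\le r$, $\mu_i\mapsto \mu_i-1$ for $1\le i\le r$, and fixing the rest, followed by reindexing. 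The crucial point is that this operation preserves $\sum_i\mu_i$: the $+1$'s on the first $r$ of the $p$-modular slots cancel the $-1$'s on the first $r$ of the unimodular slots. Hence $\Gamma\alpha^{-1}\Gamma$ is the double coset of some $D(r,r,\mu')$ with $\sum\mu_i'=\sum\mu_i=j$, so the involution permutes the double cosets appearing in $T(p^j)$ among themselves, and $T(p^j)^\iota=T(p^j)$.

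For the commutativity, I would invoke the classical lemma (as in \cite{shimurabook}): if an anti-involution $\iota$ of a Hecke algebra fixes a set of generators, then the subalgebra they generate is commutative, because $\iota$ is the identity on that subalgebra (it fixes generators and is an anti-automorphism, hence $xy=\iota(xy)=\iota(y)\iota(x)=yx$ for $x,y$ in the subalgebra). So once $T(p^j)^\iota=T(p^j)$ is established for all $j$, commutativity of $\langle T(p^j):j\in\N\rangle$ is immediate. One should check that the $\Gamma$-double-coset algebra admits this anti-involution $\Gamma\alpha\Gamma\mapsto\Gamma\alpha^{-1}\Gamma$: this holds because $\Gamma$ is a paramodular group and $\alpha\mapsto\alpha^{-1}$ together with the symmetry of the Hecke pairing gives the required anti-automorphism, a general fact about Hecke algebras of the relevant type.

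The main obstacle I anticipate is bookkeeping the normalization between the two realizations $\hat\Gamma$ and $\Gamma$ carefully enough that the permuted tuple $\mu'$ genuinely lies in the index range specified in Theorem \ref{hecke_representatives_local}(ii) — in particular that after the shifts $\mu_{a+i}+1$ and $\mu_i-1$ and reordering, the resulting tuple still satisfies the monotonicity constraints $1\le\mu_1'\le\cdots$, $0\le\mu_{r_-+1}'\le\cdots$, etc., so that $D(r,r,\mu')$ is one of the listed representatives rather than something that must be further reduced. Lemma \ref{transpose_integrality} already does the hard linear algebra (the integrality and the explicit formula), so this reduces to an ordering check using $1\le\mu_1\le\cdots\le\mu_r$; still, one must be attentive about which slot goes where under the reindexing built into the definition of $B(r_-,r_+,\mu_1,\ldots,\mu_n)$. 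Everything else is the formal Hecke-algebra argument.
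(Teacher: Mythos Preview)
Your approach is essentially the paper's: use Lemma \ref{transpose_integrality} to show that the involution permutes the double cosets occurring in $T(p^j)$ (since $\sum\mu_i$ is preserved under the shifts $\mu_i\mapsto\mu_i-1$ for $1\le i\le r$ and $\mu_{a+i}\mapsto\mu_{a+i}+1$ for $1\le i\le r$), and then conclude commutativity from the standard anti-involution argument as in Shimura's Prop.~3.8.

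One concrete correction: the matrix $\diag(1_n,T,1_n,T^{-1})$ is not symplectic and hence not in $\Gamma_{a,b}$, so conjugation by it does not preserve the double coset. The paper instead conjugates $D(r,r,\mu_1,\ldots,\mu_n)^{-1}$ by
\[
\begin{pmatrix}0_n & T^{-1}\\ -T & 0_n\end{pmatrix}\in\Gamma_{a,b}
\qquad\bigl(T=\diag(1_a,p1_b)\bigr),
\]
which sends the upper-left block $B^{-1}$ to $T^{-1}\,{}^t\!B\,T$; Lemma \ref{transpose_integrality} then identifies this with $B(r,r,\mu')$ exactly as you describe. With this element substituted for your diagonal one, your argument goes through.
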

\begin{proof}
  If we conjugate the   inverses
  $D(r=r_+,r=r_-,\mu_1,\ldots,\mu_n)^{-1}$ of the representatives
  given above of the double cosets occurring
  in $T(p^j)$ by
  \begin{equation*}
    \begin{pmatrix}
      &&1_a&\\
      &&&p^{-1}1_b\\
      -1_a&&&\\
      &-p1_b&&
    \end{pmatrix}\in \Gamma_{a,b}
  \end{equation*} we obtain by the previous lemma a permutation of the representatives of
  the double cosets in $T(p^j)$. %

The asserted commutativity
can then be seen  from an easy modification of the proof  of
\cite[Prop. 3.8]{shimurabook}.
\end{proof}
\begin{remark}
For later use we notice that $T(p^0)$ consists  only of the double coset
  with $r=\mu_1=\ldots=\mu_n=0$.
\end{remark}

In the case of level $1$, i.e., the usual full integral symplectic
group, it is well known that the formal power series $\sum_{j=0}^\infty
T(p^j)X^j$ is a rational function in $X$, and that this rational function
can be  explicitly given (essentially equal to the local Euler-factor of the 
standard $L$-function),  
see
\cite{
boecherer_rationalitaetssatz}.
We have unfortunately not succeeded  in finding  a similar result in the
paramodular case. For the applications we have in mind we can,
however, at least partly substitute this result by an upper estimate for the
number of one sided cosets that occur when one expands each of the
double cosets occurring in $T(p^j)$ into a formal sum of single
cosets. For this we need a few preparations. For  $j\ge 0$ we denote by $N(p^j)$ the number of left cosets $\Gamma
\gamma$ occurring in the decomposition of the double cosets in
$T(p^j)$ into left cosets. In analogy to the case of symmetric
bilinear forms, see \cite{kneser_nachbarn}, we say that isomorphic
lattices $\Lambda, \Lambda'=\alpha \Lambda \quad \alpha \in Sp(V)$ on
$V$ are neighbors of each other if $\Lambda \cap \Lambda'$ has index
$p$ in both $\Lambda$ and $\Lambda'$.
\begin{lemma}
Let $j\ge 0$.
\begin{enumerate}
\item 
The cosets $\alpha Sp(\Lambda)$ with $\alpha \in Sp(V)$ correspond
bijectively to the lattices $\alpha \Lambda$ in $V$.
\item The double coset $Sp(\Lambda) \alpha Sp(\Lambda)$ appears in
  $T(p^j)$ if and only if $\Lambda \cap \alpha \Lambda$ has index
  $p^j$ in $\Lambda$ and in $\alpha \Lambda$. 
  \item $N(p)$ is the number of lattices in $V$ which are neighbors of
    $\Lambda$.
    \item $N(p^j)\le (N(p))^j$.
\end{enumerate}
\end{lemma}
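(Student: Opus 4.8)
The plan is to prove the four assertions in order, each following quickly from the theory developed above. For part (a), the point is that $Sp(V)$ acts transitively on the set of lattices on $V$ that are isometric to $\Lambda$ — indeed, if $\Lambda' = \alpha\Lambda$ then $\alpha$ is precisely an element of $Sp(V)$ carrying $\Lambda$ to $\Lambda'$ — and the stabilizer of $\Lambda$ in this action is by definition $Sp(\Lambda)$. Hence $\alpha \mapsto \alpha\Lambda$ induces a bijection between the coset space $Sp(V)/Sp(\Lambda)$, i.e.\ the set of right cosets $\alpha Sp(\Lambda)$, and the set of lattices $\alpha\Lambda$; one only has to be mildly careful about the side on which the stabilizer sits, but since $\beta\Lambda = \Lambda \iff \beta \in Sp(\Lambda)$, the cosets $\alpha Sp(\Lambda)$ and $\alpha' Sp(\Lambda)$ coincide exactly when $\alpha\Lambda = \alpha'\Lambda$.

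For part (b), I would unwind what $T(p^j)$ is. By the Corollary preceding this lemma, $T(p^j)$ is the sum of the double cosets $\Gamma D(r,r,\mu_1,\dots,\mu_n)\Gamma$ with $\mu_1+\dots+\mu_n = j$, and by Theorem~\ref{hecke_representatives_local}(iii) together with part (i) of that theorem these are exactly the double cosets $Sp(\Lambda)\alpha Sp(\Lambda)$ for which $\alpha\Lambda$ has elementary divisors $(p^{\mu_1},\dots,p^{\mu_n}, p^{-\mu_1},\dots,p^{-\mu_n})$ (up to the shifts recorded in the explicit form of $B$) with respect to $\Lambda$ — note that here $a=a'$, $b=b'$, so $\alpha\Lambda$ is again a lattice of the same level and the pair $(\Lambda,\alpha\Lambda)$ is governed purely by these elementary divisors. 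The index $[\Lambda : \Lambda\cap\alpha\Lambda]$ equals $\prod_i p^{\max(\mu_i,0)}\cdot\prod_i p^{\max(-\mu_i,0)}$-type product; since all $\mu_i \ge 0$ in the normalization above (after absorbing the $\pm 1$ shifts symmetrically), this index works out to $p^{\mu_1+\dots+\mu_n} = p^j$, and the same holds for $[\alpha\Lambda : \Lambda\cap\alpha\Lambda]$ because the elementary divisor multiset is symmetric under $\mu \mapsto -\mu$. Conversely a double coset with both indices equal to $p^j$ has elementary divisor exponents summing (in absolute value, appropriately) to $j$, so it occurs in $T(p^j)$.

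Part (c) is the special case $j=1$ of (b) combined with (a): by (b) a left coset $\Gamma\gamma$ in $T(p)$ corresponds to a lattice $\gamma\Lambda$ with $[\Lambda : \Lambda\cap\gamma\Lambda] = [\gamma\Lambda : \Lambda\cap\gamma\Lambda] = p$, which is exactly the definition of a neighbor of $\Lambda$; and by (a) distinct left cosets $\Gamma\gamma$ give distinct lattices. Hence $N(p)$ counts the neighbors of $\Lambda$.

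For part (d), I would argue that neighbor-chains compose: if $\Lambda = \Lambda_0, \Lambda_1, \dots, \Lambda_j = \alpha\Lambda$ is a chain in which consecutive lattices are neighbors, then $[\Lambda_0 : \Lambda_0 \cap \Lambda_j]$ divides $p^j$, and by (b) every double coset occurring in $T(p^j)$ — equivalently every lattice at ``distance exactly $j$'' from $\Lambda$ in the appropriate sense — is reached by at least one such chain of length $j$. More precisely, expanding $T(p^j)$ into left cosets and each left coset in $T(p)$ into the $N(p)$ neighbor-lattices, one shows by an elementary-divisor / local argument that any left coset $\Gamma\gamma$ appearing in $T(p^j)$ also appears in the formal product (of Hecke operators) $T(p)^{\,j}$ with positive coefficient — equivalently, the lattice $\gamma\Lambda$ is the endpoint of some length-$j$ neighbor chain starting at $\Lambda$. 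Since $T(p)^{\,j}$ decomposes into at most $N(p)^j$ left cosets (iterating that each application of $T(p)$ multiplies the coset count by at most $N(p)$), and $T(p^j)$'s left cosets form a subset of these, we get $N(p^j) \le N(p)^j$. The main obstacle is the bookkeeping in this last step: one must verify that no left coset of $T(p^j)$ is ``missed'' by $T(p)^{\,j}$, i.e.\ that distance $\le j$ in the neighbor graph really is witnessed by a chain of length exactly $j$ (padding shorter chains by going to a neighbor and back, which is legitimate since a lattice is always a neighbor of one of its neighbors when the residue field is large enough, or more robustly by observing the inequality only needs $T(p^j)$'s cosets to be \emph{among} those of $T(p)^j$, which follows from associativity of the Hecke action and the fact that $T(p^j)$ has nonnegative structure constants relative to $T(p)^j$). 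I would present this via the inclusion of supports of Hecke operators rather than via explicit chain combinatorics, which keeps the argument clean.
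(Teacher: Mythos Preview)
Your arguments for (a) and (b) are fine and match the paper's (much terser) treatment.

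For (c) there is a small but genuine slip: you write that a left coset $\Gamma\gamma$ corresponds to the lattice $\gamma\Lambda$, but the map $\Gamma\gamma\mapsto\gamma\Lambda$ is not well-defined (replacing $\gamma$ by $\sigma\gamma$ with $\sigma\in\Gamma$ sends $\gamma\Lambda$ to $\sigma\gamma\Lambda$, which need not equal $\gamma\Lambda$). What is well-defined is $\Gamma\gamma\mapsto\gamma^{-1}\Lambda$, or equivalently the map on \emph{right} cosets $\gamma\Gamma\mapsto\gamma\Lambda$ from part (a). The paper fixes this by observing that the number of left cosets in $\Gamma\alpha\Gamma$ equals the number of right cosets, so one may count via right cosets and then invoke (a) and (b) directly. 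You should insert this remark; once you do, your (c) is correct.

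For (d) there is a real gap. You correctly identify the key point: every left coset occurring in $T(p^j)$ must also occur in $T(p)^{\,j}$, equivalently every lattice $\alpha\Lambda$ with $(\Lambda:\Lambda\cap\alpha\Lambda)=p^j$ is the endpoint of a neighbor chain of length $j$ starting at $\Lambda$. But you do not prove this. The appeal to ``nonnegative structure constants relative to $T(p)^j$'' is circular: knowing that the structure constants are nonnegative integers does not tell you they are \emph{positive}, which is exactly what you need. Your aside about padding shorter chains addresses the wrong concern; the issue is existence of a chain of length exactly $j$, not excess length.

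The paper's argument here is concrete and short: by the proof of Theorem~\ref{hecke_representatives_local} there is a parasymplectic basis $(e_1,\dots,e_n,f_1,\dots,f_n)$ of $\Lambda$ such that $(p^{\mu_1}e_1,\dots,p^{\mu_n}e_n,p^{-\nu_1}f_1,\dots,p^{-\nu_n}f_n)$ is a basis of $\alpha\Lambda$. One then steps from $\Lambda$ to $\alpha\Lambda$ by changing one exponent by $1$ at a time; each such step is a neighbor move, and the total number of steps is exactly $j$. This furnishes the required chain and hence the inclusion of supports, giving $N(p^j)\le N(p)^j$. You should replace the Hecke-algebra hand-waving with this explicit construction (or an equivalent one).
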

\begin{proof}
 a) is obvious, b) follows from the definition of $T(p^j)$. For c) and
 d) we notice that the number of left cosets in $\Gamma \alpha \Gamma$
 equals the number of right cosets. c) is then immediate from
 a),b). Finally, as noticed in the proof of Theorem
 \ref{hecke_representatives_local}, isomorphic lattices $\Lambda,
 \Lambda'=\alpha\Lambda$ of level dividing $p$ have adapted parasymplectic bases
 $(e_1,\dots,e_n,f_1,\dots,f_n)$, $(p^{\mu_1}e_1,\dots, p^{\mu_n}e_n$,
 $p^{\nu_1}f_1,\dots,p^{\nu_n}f_n)$. From this we see that the
 lattices $\Lambda,
 \Lambda'=\alpha\Lambda$ with $(\Lambda:\Lambda \cap
 \alpha\Lambda)=p^j$ can be  connected by a chain of neighboring
 lattices of length $j$, which implies d). Notice, however, that
 $N(p)^j$ will usually be larger than $N(p^j)$, since chains of
 neighboring lattices may backtrack and since different chains can
 arrive at the same goal.
\end{proof}
\begin{lemma}
Let $n=n_1+n_2$ and assume that $\Lambda$ is the orthogonal sum of
$n_1$ unimodular hyperbolic planes and $n_2$ $p$-modular hyperbolic
planes.

Then the number $N(p)$ of neighbors of $\Lambda$ equals
\begin{equation*}
  \frac{p}{(p-1)^2 }(p^{2n_1}-1)(p^{2n_2}-1)
+\frac{p}{p-1}p^{2n_1}(p^{2n_2}-1)
+\frac{p}{p-1}p^{2n_2}(p^{2n_1}-1).
\end{equation*}
If $p>3$ or $n_2=0$ holds one has $N(p)< p^{2n+1}$, for $p=3$
we have $N(3)< \frac{5}{4}3^{2n+1}$, for $p=2$ one has $N(2)<2^{2n+2}$.
\end{lemma}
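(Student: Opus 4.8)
The plan is to count neighbors of $\Lambda$ directly by parametrizing them via the isotropic line picture, then estimate the resulting expression. First I would use Lemma~\ref{hecke_representatives_local}(iii): a neighbor $\Lambda'$ of $\Lambda$ corresponds to a double coset $Sp(\Lambda)\alpha Sp(\Lambda)$ appearing in $T(p)$, i.e. one with $\mu_1+\dots+\mu_n=1$. Reading off the representatives from the theorem, there are exactly three families: (a) $r_-=1$ with all $\mu_i=0$ except $\mu_{r_-}=1$ — this converts a unimodular plane of $\Lambda$ into a $p$-modular one, so it needs $a\ge 1$, i.e. $n_1\ge 1$; (b) $r_+=1$ with all $\mu_i=0$ except $\mu_{a+r_+}=1$ — this converts a $p$-modular plane into a unimodular one, needing $n_2\ge 1$; (c) $r_-=r_+=0$ with exactly one $\mu_i=1$ (the index lying in the "middle" blocks), which keeps the genus but shifts one basis vector by $p$. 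The number $N(p)$ is the sum over these three double cosets of the index $[Sp(\Lambda):Sp(\Lambda)\cap\alpha^{-1}Sp(\Lambda)\alpha]$, equivalently the number of lattices $\Lambda'$ in each orbit at distance-$1$ from $\Lambda$.

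Next I would compute each of the three counts as the number of suitable lattices. Writing $\Lambda=H_1\perp\dots\perp H_{n_1}\perp (pH_1')\perp\dots\perp(pH_{n_2}')$ and passing to $\Lambda/p\Lambda$, which carries an induced alternating form making the unimodular part a nondegenerate symplectic $\F_p$-space $W$ of dimension $2n_1$ and the $p$-modular part the radical $W_0$ of dimension $2n_2$: a neighbor of type (a) is determined by choosing a nonzero isotropic line in the symplectic space $W$ (there are $(p^{2n_1}-1)/(p-1)$ of these) together with a line in $W_0$ giving the new pairing direction — producing the factor $\frac{p}{(p-1)^2}(p^{2n_1}-1)(p^{2n_2}-1)$ after accounting for the $p$-adic (rather than mod-$p$) choices; type (b) is the mirror image with $n_1\leftrightarrow n_2$ but with one extra degree of freedom in $W$ rather than its radical, giving $\frac{p}{p-1}p^{2n_2}(p^{2n_1}-1)$; type (c), staying in the same genus, gives $\frac{p}{p-1}p^{2n_1}(p^{2n_2}-1)$. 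The bookkeeping that the powers of $p$ come out exactly as stated (from the stabilizer index / the number of $\Z_p$-lattices reducing to a fixed $\F_p$-subspace picture) is the step I would do carefully; it is essentially a geometry-of-numbers count over $\Z_p$ and matches the double-coset index formula one gets from the explicit $B(r_-,r_+,\mu)$.

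For the inequalities, add the three terms. Expanding, $N(p)=\frac{p}{(p-1)^2}\big[(p^{2n_1}-1)(p^{2n_2}-1)+(p-1)p^{2n_1}(p^{2n_2}-1)+(p-1)p^{2n_2}(p^{2n_1}-1)\big]$, and the bracket simplifies to roughly $(2p-1)p^{2n_1+2n_2}$ minus lower-order terms, so $N(p)\approx \frac{p(2p-1)}{(p-1)^2}p^{2n}$. Since $\frac{p(2p-1)}{(p-1)^2}<p$ is equivalent to $2p-1<(p-1)^2$, i.e. $p^2-4p>-2+... $ — concretely it holds for $p\ge 5$, and for $p\ge5$ one also absorbs the subtracted lower-order terms, giving $N(p)<p^{2n+1}$; for $p=3$ one has $\frac{3\cdot5}{4}=\frac{15}{4}$ against $3^{2n+1}=3\cdot 3^{2n}$, and $\frac{15}{4}<\frac{5}{4}\cdot 3=\frac{15}{4}$ with the strict inequality coming from the negative lower-order terms, yielding $N(3)<\frac54 3^{2n+1}$; for $p=2$ the leading coefficient is $\frac{2\cdot3}{1}=6<8=2^3$, giving $N(2)<2^{2n+2}$. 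When $n_2=0$ only type (a)-with-$n_1\ge1$... actually only type (c) can fail and type (a),(b) vanish, leaving $N(p)=\frac{p}{p-1}(p^{2n_1}-1)p^{0}\cdot$(the $n_2=0$ specialization) $=\frac{p}{p-1}(p^{2n}-1)<p\cdot p^{2n}=p^{2n+1}$ for all $p$, explaining the exception. The main obstacle is pinning down the exact powers of $p$ in steps (a)--(c) — i.e. correctly counting $\Z_p$-lattice neighbors rather than just their mod-$p$ images — but since the double-coset representatives $D(r_-,r_+,\mu)$ are explicit, this reduces to computing the index $[\Gamma_{a,b}:\Gamma_{a,b}\cap \alpha\Gamma_{a,b}\alpha^{-1}]$ for the three relevant $\alpha$, a finite and mechanical computation.
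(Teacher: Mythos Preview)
Your overall strategy---identify the double cosets in $T(p)$ via Theorem~\ref{hecke_representatives_local} and compute the index of each---is a reasonable alternative to the paper's direct count, but your classification of the three types is wrong, and this is the heart of the argument. Neighbors are \emph{isometric} to $\Lambda$, so in the parametrization of Theorem~\ref{hecke_representatives_local} one must have $a'=a$, $b'=b$, i.e.\ $r_-=r_+$. Your cases (a) ``$r_-=1$'' and (b) ``$r_+=1$'' (with the other one implicitly zero) describe lattices with a different number of unimodular planes, hence not neighbors of $\Lambda$; and your case (c) ``$r_-=r_+=0$ with one $\mu_i=1$'' is actually \emph{two} distinct double cosets (depending on whether the index $i$ lies in $\{1,\dots,a\}$ or in $\{a+1,\dots,n\}$). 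The correct list is: $r_-=r_+=0$ with $\mu_a=1$; $r_-=r_+=0$ with $\mu_n=1$; and $r_-=r_+=1$ with $\mu_1=1$, $\mu_{a+1}=0$. These three correspond, respectively, to the third, second, and first summands of the formula. Your attempted matching of types to summands (e.g.\ your type (a) to the first summand) and the accompanying heuristic about ``an isotropic line in $W$ together with a line in $W_0$'' therefore does not line up with the actual situation.

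Beyond the misclassification, the counting itself is not carried out---you defer the ``bookkeeping'' to a stabilizer-index computation. The paper takes a more hands-on route: each neighbor $\Lambda'$ is described as $\Lambda'=\Z_p\frac{x}{p}+\Lambda_x$ for a suitable primitive $x\in\Lambda$ (or, in the third type, a pair $x,y$), and the three types are distinguished by whether $\langle x,\Lambda\rangle=\Z_p$ or $p\Z_p$ and similarly for $\langle x/p,\Lambda'\rangle$. One then counts eligible lines modulo $p^2\Lambda$ or $p^3\Lambda$ and divides by the overcount (how many such lines give the same $\Lambda'$); this is where the exact powers of $p$ come from. Your route via $[\Gamma_{a,b}:\Gamma_{a,b}\cap\alpha\Gamma_{a,b}\alpha^{-1}]$ would also work once the three $\alpha$'s are correctly identified, but you would still have to actually compute those indices. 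Your treatment of the inequalities (extracting the leading term $(2p-1)p^{2n}/(p-1)^2$ and comparing $2p-1$ with $(p-1)^2$) is essentially sound and matches what one checks from the explicit formula.
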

\begin{proof}
There are three different types of neighbors $\Lambda'$ of  a given $\Lambda$
which we will count separately.

In the first type there is $x\in \Lambda$ with $\frac{x}{p}\in
\Lambda'$ and $\langle x, \Lambda\rangle=\langle \frac{x}{p}, \Lambda'
\rangle=\Z_p$. We have then
$\Lambda'=:\Lambda(x)=\Z_p \frac{x}{p}+\Lambda_x$ with $\Lambda_x:=
\{z \in \Lambda \mid \langle z,x\rangle \in  p\Z_p\}$. If we have two
vectors $x,x'$ as above with $\Lambda(x)=\Lambda(x')$ and $z' \in
\Lambda_{x'}$, we can write $pz'=\alpha x + pz$ with $\alpha \in \Z_p,
z \in \Lambda_x$ and obtain $\langle pz',x\rangle \in p^2\Z_p$, thus
$\Lambda_{x'} \subseteq \Lambda_x$ and hence
$\Lambda_x=\Lambda_{x'}$. On the other hand, if we have $x,x'\in
\Lambda$ as above with 
$\Lambda_x=\Lambda_{x'}$ we have $\Lambda(x) =\Lambda(x')$ if
and only if $x,x'$ generate the same line modulo
$p\Lambda_x=p\Lambda_{x'}\supseteq p^2\Lambda$. Obviously, one has
$\Lambda(x)=\Lambda(x')$ if $x,x'$ generate the same line modulo $p^2\Lambda$.
The number of eligible lines $\Z_px+p^2\Lambda$ is
\begin{equation*}
  \frac{(p^{2n}-p^{2n_2})p^{2n}}{p^2-p}=\frac{p^{2n+2n_2}(p^{2n_1}-1)}{p^2-p}.
\end{equation*}
Writing $\Lambda=(\Z_px+\Z_p\tilde{x})\perp \tilde{\Lambda}$ with
  $\langle x, \tilde{x}\rangle=1$ we have
  $\Lambda_x=(\Z_px+p\Z_p\tilde{x})\perp \tilde{\Lambda}$ and see that
    each line modulo $p\Lambda_x$ contains $p^{2n}$ points modulo
    $p^2\Lambda$, whereas each line modulo $p^2\Lambda$ consists of
    $p^2$ points. 
Each line modulo $p\Lambda_x$ consists therefore of $p^{2n-2}$ lines
modulo $p^2 \Lambda$  yielding the same neighbor of $\Lambda$, and
we obtain the last summand  in our formula for $N(p)$ as the number of
neighbors of the first type.

In the second type there is  a primitive vector $y\in \Lambda$ with
$\frac{y}{p} \in \Lambda'$ and $\langle y,\Lambda\rangle=\langle
\frac{y}{p}, \Lambda'\rangle =p\Z_p$, one
has then $\Lambda'=:\Lambda(y)=\Z_p \frac{y}{p}+\Lambda_y$ with
$\Lambda_y:=\{z\in \Lambda\mid \langle z,y\rangle \in p^2\Z_p\}$. In
the same way as above we see that $\Lambda(y)=\Lambda(y')$ implies
$\Lambda_y=\Lambda_{y'}$ and that vectors $y,y'$ as above with
$\Lambda_y=\Lambda_{y'}$ yield the same neighbor if and only if 
they generate the same line modulo $p\Lambda_y=p\Lambda_{y'}\supseteq
p^3\Lambda$.
Obviously, one has $\Lambda(y)=\Lambda(y')$ if $y,y'$ generate the
same line modulo $p^3\Lambda$.
The number of eligible lines modulo $p^3\Lambda$ is 
\begin{equation*}
  \frac{p^{4n+2n_1}(p^{2n_2}-1)}{p^3-p^2}.
\end{equation*}
Writing $\Lambda=(\Z_py+\Z_p\tilde{y})\perp \tilde{\Lambda}$ with
$\langle y,\tilde{y} \rangle =p$ we have
$\Lambda_y=(\Z_py+p\Z_p\tilde{y})\perp \tilde{\Lambda}$ and see that
the line $\Z_py+p\Lambda_y$ consists of $p^{4n}$ points modulo
$p^3\Lambda$, whereas each line modulo $p^3 \Lambda$ consists of $p^3$
points.
Each line $\Z_py+\Lambda_y$ consists therefore of $p^{4n-3}$ lines
modulo $p^3\Lambda$ all yielding the same neighbor of $\Lambda$, and
we obtain the second summand in our formula for $N(p)$ as the number
of neighbors of the second type.

In the third type there is a vector $x \in \Lambda\cap\Lambda', x\not \in p\Lambda'$ with $\langle x,
\Lambda\rangle =\Z_p, \langle x, \Lambda' \rangle =p\Z_p$ and a
primitive vector $y \in \Lambda$ with $\frac{y}{p}\in \Lambda', \langle 
y,\Lambda\rangle=p\Z_p, \langle \frac{y}{p}, \Lambda'\rangle =\Z_p$,
we have then $\Lambda'=:\Lambda(x,y)=\Z_p\frac{y}{p}+\Lambda_x$ with
$\Lambda_x=\{z\in \Lambda \mid \langle z,x \rangle\in p\Z_p$. As in
the previous cases we see that $\Lambda(x,y)=\Lambda(x',y')$ implies
$\Lambda_x=\Lambda_{x'}$ and that for given $\Lambda_x$ two vectors  $y,y'$ as above yield the
same neighbor if and only if they generate the same line modulo
$p\Lambda_x\supseteq p^2\Lambda$.

Obviously one has
$\Lambda(x,y)=\Lambda(x',y')$ if $\Lambda_x=\Lambda_{x'}$ and $y,y'$
generate the same line modulo $p^2\Lambda$. We have
$\Lambda_x=\Lambda_{x'}$ if and only if $x,x'$ generate the same line
modulo $\Lambda \cap p\Lambda^\#$, and one has $(\Lambda:\Lambda\cap
p\Lambda^\#)=p^{2n_1}$, the number of possible $\Lambda_x$ is hence
equal to $\frac{p^{2n_1}}{p-1}$. For a fixed $\Lambda_x$ we may write
$\Lambda_x+\Z_p\frac{y}{p}=(\Z_px+p\Z_p\tilde{x})\perp
(\Z_p\frac{y}{p}+Z_p\tilde{y})\perp \tilde{\Lambda}$ with $\langle x,
\tilde{x}\rangle=1, \langle y, \tilde{y}\rangle =p$ by changing $x$
modulo $\Lambda \cap p\Lambda^\#$ if necessary. We see that
$\Z_py+p\Lambda_x$ consists of $p^{2n}$ points modulo
$p^2\Lambda$. Since a line modulo $p^2\Lambda$ consists of $p^2$
points, the line $\Z_py+p\Lambda_x$  consists of $p^{2n-2}$ lines
modulo $p^2\Lambda$ all yielding, together with $\Lambda_x$, the same
neighbor. The number of eligible lines modulo $p^2\Lambda$ is
\begin{equation*}
  \frac{p^{2n}(p^{2n_2}-1)}{p^2-p},
\end{equation*}
dividing by $p^{2n-2}$ and multiplying by the number of $\Lambda_x$ we
obtain the first summand in our formula for $N(p)$ as the contribution
of the neighbors of the third type.

Finally, the upper bounds for $N(p)$ are easily checked using our formula.
\end{proof}
For use in later sections we have to make the transition from our
local considerations to the global setting more explicit. We therefore
switch now to ground field $\Q$ and a lattice $\Lambda$ of square free
level $N$ on the $\Q$-vector space $V$, put $\Gamma=Sp(\Lambda)$ and $\Gamma_p=Sp(\Lambda_p)$.
As mentioned in the beginning of this section, the Hecke algebra
${\mathcal H}(Sp_n(V), \Gamma)$ is isomorphic to the restricted tensor
product of the local Hecke algebras ${\mathcal H}(Sp_n(V_p), \Gamma_p)$.

\begin{corollary}\label{blockdiagonal_rep}
  Let $ \Gamma=\Gamma^{(n)}(T), \Gamma'=\Gamma^{(n)}(T')$  with
$T=\diag(d_1,\dots,d_n)$, $T'=\diag(d_1',\dots,d_n')$, $d_i\mid d_{i+1},
  d'_i\mid d'_{i+1}$
  be paramodular subgroups of $Sp_n(\Q)$ of
square free levels $N,N'$.
Then each double coset $\Gamma' \alpha \Gamma$ with $\alpha \in
Sp_n(\Q)$ has a representative of the form
\begin{equation*}
  \begin{pmatrix}
    B&0\\0&{}^tB^{-1}
  \end{pmatrix}, 
\end{equation*}
with $B\in GL_n(\Q)\cap M_n(\Z)$ satisfying $T^{-1}{}^t\!BT' \in M_n(\Z)$.
\end{corollary}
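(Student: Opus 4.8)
The plan is to reduce the global statement to the local one already proved in Theorem~\ref{hecke_representatives_local}(iii) via the restricted tensor product decomposition of the Hecke bimodule, exactly as outlined in the paragraph immediately preceding the corollary. First I would invoke the factorization $\mathcal{H}(Sp_n(\Q),\Gamma)\cong \bigotimes_p'\mathcal{H}_p(Sp_n(\Q_p),\Gamma_p)$ (and its bimodule analogue for double cosets $\Gamma'\alpha\Gamma$ with possibly distinct $\Gamma,\Gamma'$), which rests on strong approximation for $Sp_n$ and is cited from \cite{borel_jacquet,boech}. Concretely this says: given $\alpha\in Sp_n(\Q)$, for each prime $p$ choose a representative $\beta_p\in Sp_n(\Q_p)$ of the local double coset $\Gamma'_p\alpha\Gamma_p$ (with $\beta_p\in\Gamma'_p=\Gamma_p$ for all but finitely many $p$, namely those not dividing $NN'\det\alpha$), and then there exists a single $\beta\in Sp_n(\Q)$ with $\Gamma'\beta\Gamma=\Gamma'\alpha\Gamma$ and $\Gamma'_p\beta\Gamma_p=\Gamma'_p\beta_p\Gamma_p$ for every $p$.

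Next I would produce the local representatives in the required block-diagonal shape. At a prime $p\nmid NN'$ both $\Gamma_p$ and $\Gamma'_p$ are the standard maximal compact $Sp_n(\Z_p)$, and the classical elementary-divisor theory for $Sp_n$ gives a block-diagonal representative $\mathrm{diag}(B_p,{}^tB_p^{-1})$ with $B_p$ a diagonal matrix of powers of $p$; here $T^{-1}\,{}^t\!B_pT'$ is automatically in $M_n(\Z_p)$ since $T,T'$ are units at $p$. At a prime $p\mid NN'$, the levels of $\Lambda_p$ and of the lattice defining $\Gamma'$ divide $p$ by square-freeness, so $\Lambda_p$ is the orthogonal sum of $a$ unimodular and $b$ $p$-modular hyperbolic planes (and similarly $a',b'$ for $\Gamma'$); Theorem~\ref{hecke_representatives_local}(iii) then furnishes a representative of the form $D(r_-,r_+,\mu_1,\dots,\mu_n)=\mathrm{diag}(B(r_-,r_+,\boldsymbol\mu),{}^tB(r_-,r_+,\boldsymbol\mu)^{-1})$, and Lemma~\ref{transpose_integrality} is exactly the statement that $T^{-1}\,{}^t\!B(r_-,r_+,\boldsymbol\mu)T'$ has integral entries at $p$. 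In either case $B_p$ is a monomial matrix of $p$-power entries, hence lies in $GL_n(\Q_p)\cap M_n(\Z_p)$.

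Finally I would assemble these local data into a global $B$. Since $B_p$ is a monomial (permutation times diagonal-of-$p$-powers) matrix for each $p$ and is the identity outside a finite set $S$ of primes, I can take $B:=\prod_{p\in S}B_p$ — more precisely, build $B$ entry by entry by CRT/localization so that its $p$-adic shape is $B_p$ for each $p\in S$ and $B\in GL_n(\Z_p)$ for $p\notin S$; because the $B_p$ for different $p$ can be arranged to be supported on comparable coordinate blocks, the product is again a sensible element of $GL_n(\Q)\cap M_n(\Z)$. Then $\mathrm{diag}(B,{}^tB^{-1})\in Sp_n(\Q)$ has the correct local double coset at every prime, so by the bimodule factorization it represents $\Gamma'\alpha\Gamma$; and $T^{-1}\,{}^t\!BT'\in M_n(\Z)$ holds because it holds $p$-adically at every $p$ (at $p\in S$ by the two lemmas, at $p\notin S$ trivially). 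The one point requiring a little care — the main obstacle — is the bookkeeping in this last assembly step: one must check that the independently chosen local monomial matrices $B_p$ can be realized simultaneously by a single rational matrix $B$ that is still \emph{monomial} (or at least still satisfies the integrality constraints on both $B$ and $T^{-1}\,{}^t\!BT'$), which is why it is convenient to first conjugate each $D(r_-,r_+,\boldsymbol\mu)$ by an element of $\Gamma_p$ so that all the local representatives use a compatible ordering of the coordinates before taking the product; strong approximation for $SL_n$ (as used in the proof of Lemma~\ref{matrixrepresentatives_cusps}) handles this uniformization.
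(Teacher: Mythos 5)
Your overall strategy is the same as the paper's -- reduce by strong approximation to the local double cosets, quote Theorem~\ref{hecke_representatives_local}~iii) and Lemma~\ref{transpose_integrality} at the bad primes -- but the globalization step, which you yourself single out as the main obstacle, contains a genuine gap. Taking $B=\prod_{p\in S}B_p$ (or assembling the $B_p$ ``entry by entry by CRT'') does not produce an element of the correct double coset: testing the product at a prime $q\in S$, the factors $B_p$ with $p\neq q$ are monomial matrices with unit entries at $q$, but the symplectic elements $\diag(B_p,{}^t\!B_p^{-1})$ need not lie in $\Gamma_q$ or $\Gamma'_q$, because membership in a paramodular group at $q$ forces the permutation part to respect the pattern of which diagonal entries of $T$ (resp.\ $T'$) are divisible by $q$, while the permutation built into $B_p$ is dictated by the block structure at $p$. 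Your proposed fix, ``first conjugate each $D(r_-,r_+,\mu)$ so that all local representatives use a compatible ordering,'' does not work either: one may only modify a representative by $\Gamma'_p$ on the left and $\Gamma_p$ on the right, and these groups contain only permutations preserving their own block structure at $p$; indeed, whenever $r_\pm\neq 0$ (which happens already for $T=T'$ with mixed type $a,b>0$ at $p$) the local double coset contains no representative with diagonal $B$-block, so the permutation parts cannot be removed or aligned across primes, and different local permutations genuinely obstruct a naive product.

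The paper closes exactly this gap by a different device. After choosing the local monomial representatives $\alpha_p=\diag(A_p,{}^t\!A_p^{-1})$, it first adjusts them within their double cosets so that $\det(A_p)=m$ is one and the same integer for all $p\mid M$; it then extracts a single diagonal matrix $D\in GL_n(\Q)$ with $D^{-1}\in M_n(\Z)$ and $DA_p\in SL_n(\Z_p)$ for all $p\mid M$, applies strong approximation for $SL_n$ to obtain one $A\in SL_n(\Z)$ with $A\equiv DA_p\bmod M M_n(\Z_p)$ for all $p\mid M$, and sets $B=D^{-1}A$; the congruence modulo $M$ forces the correct local double coset at every prime, and Lemma~\ref{transpose_integrality} then yields $T^{-1}\,{}^t\!BT'\in M_n(\Z)$. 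The determinant normalization together with the global diagonal denominator $D$ is precisely the idea missing from your assembly step: strong approximation for $SL_n$ cannot be applied to the $A_p$ directly, since they are not in $SL_n$ and their determinants are supported at different primes.
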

\begin{proof}
We write $\Gamma=Sp_n(\Q)\cap \prod_p\Gamma_p$  with local paramodular
groups $\Gamma_p\subseteq Sp_n(\Q_p)$ and $\Gamma_p=Sp_n(\Z_p)$ for
all primes $p \nmid N$, and similarly for $\Gamma'$. We choose local
representatives $\alpha_p$ of the double cosets $\Gamma_p \alpha
\Gamma_p'$ as in  Theorem \ref{hecke_representatives_local}, in
particular, each $\alpha_p$ is of  block diagonal shape 
$\bigl(
\begin{smallmatrix}
  A_p&0\\0&{}^t\!A_p^{-1}
\end{smallmatrix}\bigr)$, where in $A_p\in M_n(\Z_p)$ each line has a single
nonzero entry. Let $M\in \N$ with $N\mid M, N'\mid M$ be such that
$\alpha_p\in \Gamma_p=\Gamma'_p=Sp_n(\Z_p)$ for all $p\nmid M$.
Changing the local representatives  $\alpha_p$ if necessary we can
assume that there exists $m \in \N$ with $\det(A_p)=m$ for all $p\mid
M$.
We can hence find a diagonal matrix $D\in GL_n(\Q)$ with $DA_p\in
SL_n(\Z_p)$ for all primes $p\mid M$ and $D^{-1}\in M_n(\Z)$. By the strong
approximation theorem for $SL_n$ there exists $A \in SL_n(\Z)$ with
$A\equiv DA_p \bmod MM_n(\Z_p)$ for all primes $p\mid M$, and
$B=D^{-1}A$ is as asserted, where for the last part of the assertion
we use Lemma \ref{transpose_integrality}; notice that we have reversed
the roles of $T,T'$ here in view of the application of this result in the next section. 
\end{proof}
\begin{lemma}
In the Hecke algebra $ {\mathcal H}(Sp_n(V),\Gamma)$ denote for $m\in
\N$ by $T(m)$ the sum of all double cosets with a representative of the form
\begin{equation*}
  \begin{pmatrix}
    B&0\\0&{}^tB^{-1}
  \end{pmatrix}, 
\end{equation*}
as in Corollary \ref{blockdiagonal_rep} with $B\in GL_n(\Q)\cap M_n(\Z)$ of determinant $m$.
Then $T(m)$ is the product of the $T(p_i^{t_i})$, where
$m=\prod_ip_i^{t_i}$ is the factorization of $m$ into prime powers.
\end{lemma}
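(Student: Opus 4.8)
The statement is the multiplicativity $T(m)=\prod_i T(p_i^{t_i})$ in $\mathcal H(Sp_n(V),\Gamma)$ when $m=\prod_i p_i^{t_i}$. The plan is to reduce this to the factorization of the global Hecke algebra into a restricted tensor product of local Hecke algebras, already recalled at the start of Section~\ref{sec3} via \cite[4.7]{borel_jacquet}. Under the isomorphism $\mathcal H(Sp_n(V),\Gamma)\cong \bigotimes'_p \mathcal H(Sp_n(V_p),\Gamma_p)$, a double coset $\Gamma\alpha\Gamma$ corresponds to the pure tensor $\bigotimes_p (\Gamma_p\alpha_p\Gamma_p)$, where $\alpha_p$ is a local representative as produced in Theorem~\ref{hecke_representatives_local} and Corollary~\ref{blockdiagonal_rep}; for $p\nmid m$ one has $\alpha_p\in \Gamma_p$ so the $p$-component is the identity coset.

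The first step is to pin down which double cosets occur in $T(m)$ locally. By Corollary~\ref{blockdiagonal_rep}, every double coset has a representative $\bigl(\begin{smallmatrix}B&0\\0&{}^tB^{-1}\end{smallmatrix}\bigr)$ with $B\in GL_n(\Q)\cap M_n(\Z)$; the determinant $\det(B)$ is a well-defined positive integer attached to the double coset (it is the index $[\Gamma:\Gamma\cap \alpha\Gamma\alpha^{-1}]^{?}$-type invariant, but more simply: $\det(B)=\prod_p |\det(B_p)|_p^{-1}$ is multiplicative over primes and constant on the double coset since any two such $B$ differ by left and right multiplication by elements of $GL_n(\Z)$-blocks inside $\Gamma$, which have determinant $\pm1$). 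So $T(m)$ is exactly the sum of double cosets whose invariant equals $m$, and under the tensor decomposition this is the sum over all ways of writing the local invariants $m_p$ with $\prod_p m_p=m$; since $m=\prod_i p_i^{t_i}$ forces $m_{p_i}=p_i^{t_i}$ and $m_p=1$ otherwise, we get
\begin{equation*}
T(m)=\sum_{\prod_p m_p=m}\ \bigotimes_p\Big(\textstyle\sum_{\det(B_p)=m_p}\Gamma_p\alpha_p\Gamma_p\Big)=\bigotimes_i\Big(\textstyle\sum_{\det(B_{p_i})=p_i^{t_i}}\Gamma_{p_i}\alpha_{p_i}\Gamma_{p_i}\Big)\ \otimes\ \bigotimes_{p\nmid m}(\text{id}).
\end{equation*}
The second step is to recognize the $p_i$-component as $T(p_i^{t_i})$: by definition $T(p_i^{t_i})$ is the sum of the double cosets of $D(r,r,\mu_1,\ldots,\mu_n)$ with $\sum\mu_j=t_i$, and these are precisely the block-diagonal double cosets whose upper block has determinant $p_i^{t_i}$ (the $B(r,r,\mu_1,\ldots,\mu_n)$ are monomial matrices with diagonal-product $p_i^{\sum\mu_j}$). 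Feeding this back through the tensor isomorphism — which is a ring homomorphism — turns the displayed tensor product of local sums into the product $\prod_i T(p_i^{t_i})$ in $\mathcal H(Sp_n(V),\Gamma)$, because the image of a pure tensor supported at disjoint sets of primes is the product of the images.

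The main obstacle is bookkeeping, not mathematics: one must check carefully that the determinant-of-$B$ invariant is (i) well-defined on double cosets, (ii) multiplicative under the tensor decomposition, and (iii) exactly the quantity $p^{\sum\mu_j}$ attached to the local representatives in Theorem~\ref{hecke_representatives_local}(iii), so that the local sum ``$\sum_{\det B_p=p^t}$'' genuinely coincides with the locally-defined $T(p^t)$ from the Corollary following Lemma~\ref{transpose_integrality}. Point (ii) is where one uses that multiplication in the restricted tensor product is defined componentwise and that $T(1)$ at each prime is the identity coset (noted in the Remark after that Corollary), so that omitting the primes $p\nmid m$ costs nothing. Once these identifications are in place the identity $T(m)=\prod_i T(p_i^{t_i})$ is immediate. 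I would write this up in essentially the three steps above: fix the invariant, decompose $T(m)$ as a sum of pure tensors, and match each local factor to $T(p_i^{t_i})$.
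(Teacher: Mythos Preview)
Your approach is essentially the same as the paper's: both invoke the restricted tensor product factorization $\mathcal H(Sp_n(V),\Gamma)\cong\bigotimes'_p\mathcal H(Sp_n(V_p),\Gamma_p)$ and identify, prime by prime, the local component of a double coset in $T(m)$ with a summand of the locally defined $T(p_i^{t_i})$, then use that every such collection of local cosets arises from a global one (Corollary~\ref{blockdiagonal_rep}) to conclude. One small caution: your aside that ``any two such $B$ differ by left and right multiplication by elements of $GL_n(\Z)$-blocks inside $\Gamma$'' is not literally correct for paramodular $\Gamma$, but your other justification --- that $|\det B|=\prod_p p^{\sum_j\mu_j^{(p)}}$ is determined by the local invariants of Theorem~\ref{hecke_representatives_local} --- is the right one and is exactly what the paper relies on.
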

\begin{proof}
 If $\Gamma \alpha \Gamma$ is a double coset occurring in $T(m)$ as a
 summand and $p=p_i$ for some $i$, the coset $\Gamma_p \alpha
 \Gamma_p$ occurs as a summand in $T_p(p_i^{t_i})$, where we write
 $T_p(\dots)$ for the elements of the local Hecke algebra at $p$.
 Moreover, the isomorphism between $ {\mathcal H}(Sp_n(V),\Gamma)$ and
 the restricted tensor product of the $ {\mathcal
   H_p}(Sp_n(V_p),\Gamma_p)$ guarantees, together with the previous
 corollary, that each collection of local double cosets
 $\Gamma_p\alpha_p\Gamma_p$ with $\alpha_p\in\Gamma_p$ for almost all
 $p$ arises in this way from a global double coset occurring in some
 $T(m)$. Consequently, writing each $T(p_i^{t_i})$ as a sum of double 
 cosets and distributively multiplying out their product we obtain the
 sum of all double cosets occurring in $T(m)$.  
\end{proof}

\section{Garrett's double coset decomposition for paramodular groups}\label{sec4}
Our goal in this section is to generalize Garrett's double coset
decomposition from \cite{garrett} from the integral symplectic group
to paramodular groups. For this we want to investigate  orbits of
maximal totally isotropic submodules of lattices with nondegenerate
alternating bilinear form.

An obvious consequence of Theorem \ref{isotropic_submodule_bases}  and
Corollary \ref{orbits_isotropic} is:
\begin{corollary}\label{orbits_submodules}
  With $\Lambda$ as in  Theorem \ref{isotropic_submodule_bases} the  group $Sp(\Lambda)$
acts transitively on the set of maximal totally 
isotropic submodules of $\Lambda$. 

More generally, let $X_1,X_2\subseteq \Lambda$ be  sublattices and $Z_1,Z_2\subseteq
X_1,X_2$  primitive totally isotropic submodules of 
$\Lambda$ of rank $r$ with $Z_j\subseteq \rad(X_j)$ for $j=1,2$. For $j=1,2$ let $M_j=M(Z_j)$ as in  Theorem
\ref{isotropic_submodule_bases} 
with $\Lambda=M(Z_j)\perp
\Lambda'(Z_j)=M_j\perp \Lambda_j'$,
$X_j=(X_j\cap \Lambda'_j)\perp Z_j$, put $X'_j=X_j\cap \Lambda'_j$.   

Then there exists
$\phi \in Sp(\Lambda)$ with $\phi(X_1)=X_2, \phi(Z_1)=Z_2$ if and only
if  $d(Z_2)\in
d(Z_1)R^\times$ holds and there exists an isometry $\psi:\Lambda_1'\to
\Lambda_2'$ with $\psi(X_1')=X_2'$. 

A set of representatives of the orbits of pairs $X\supseteq Z$ as
above with $Z=\rad(X)$  is then obtained by picking a  $Z=Z_d$ for
each $dR^\times$ satisfying the conditions of Corollary
\ref{orbits_isotropic}  and setting $X=X'\perp Z_d$, where $X'$ runs through
a set of representatives of the $Sp(\Lambda'(Z_d))$-orbits of nondegenerate sublattices of $\Lambda'(Z)$.  
\end{corollary}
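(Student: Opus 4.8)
The plan is to deduce everything from Theorem~\ref{isotropic_submodule_bases} and Corollary~\ref{orbits_isotropic}. The first assertion (transitivity on maximal totally isotropic submodules) is the special case $X=Z$ maximal of what follows, or can be read off directly from the last part of Theorem~\ref{isotropic_submodule_bases}: any two maximal totally isotropic submodules $X_1,X_2$ can be completed to para-symplectic bases $(e_i,f_i)$, $(e_i',f_i')$ of $\Lambda$ with the \emph{same} elementary divisors $d_i\mid N$, since by Lemma~\ref{linear_algebra} the product $d_1\cdots d_m=D$ (the determinant datum of $\Lambda$) together with square-freeness of $N$ pins down, up to reordering and units, the multiset $\{d_i\}$; the $R$-linear map sending $e_i\mapsto e_i'$, $f_i\mapsto f_i'$ is then an element of $Sp(\Lambda)$ carrying $X_1$ onto $X_2$.

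For the general statement, first I would prove the ``only if'' direction, which is the easy one: if $\phi\in Sp(\Lambda)$ with $\phi(X_1)=X_2$ and $\phi(Z_1)=Z_2$, then $\phi$ restricts to an isometry $\Lambda\to\Lambda$ carrying the orthogonal splitting $\Lambda=M_1\perp\Lambda_1'$ to $\Lambda=\phi(M_1)\perp\phi(\Lambda_1')$; since $\phi(Z_1)=Z_2$ is primitive totally isotropic of rank $r$ with $d(Z_2)=d(\phi(Z_1))=d(Z_1)$ by Corollary~\ref{orbits_isotropic} (orbit invariance of $d$), and since $M(Z_2)$ is determined up to the $Sp(\Lambda)$-action, one composes $\phi$ with an element of $Sp(\Lambda)$ fixing $Z_2$ and mapping $\phi(M_1)$ onto $M_2$; this adjusted map then also carries $\phi(\Lambda_1')$ onto $\Lambda_2'$ and restricts to the desired $\psi:\Lambda_1'\to\Lambda_2'$ with $\psi(X_1')=X_2'$ because $X_j'=X_j\cap\Lambda_j'$. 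The equality $d(Z_2)\in d(Z_1)R^\times$ is immediate from Corollary~\ref{orbits_isotropic}.

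The ``if'' direction is where the real work lies. Given $d(Z_2)\in d(Z_1)R^\times$, Corollary~\ref{orbits_isotropic} furnishes $\phi_0\in Sp(\Lambda)$ with $\phi_0(Z_1)=Z_2$. Replacing $Z_1$ by $Z_2$ and $X_1$ by $\phi_0(X_1)$, I may assume $Z_1=Z_2=:Z$, so both splittings use the \emph{same} $M=M(Z)$ and the \emph{same} $\Lambda'=\Lambda'(Z)$ (here one must check that $M(Z)$ and $\Lambda'(Z)$ are canonically attached to $Z$ up to the stabilizer of $Z$ --- this follows from Theorem~\ref{isotropic_submodule_bases}, since $M(Z)$ is the unique minimal orthogonal direct summand containing $Z$ and its complement, given that $Z=\rad(M(Z))$ forces the $f_i$ to be determined modulo $M^\perp$). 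Now $X_1=Z\perp X_1'$ and $X_2=Z\perp X_2'$ with $X_j'\subseteq\Lambda'$, and by hypothesis there is an isometry $\psi:\Lambda'\to\Lambda'$ with $\psi(X_1')=X_2'$. Extend $\psi$ to $\widehat\psi:=\mathrm{id}_M\perp\psi\in Sp(M\perp\Lambda')=Sp(\Lambda)$; then $\widehat\psi(X_1)=\widehat\psi(Z)\perp\widehat\psi(X_1')=Z\perp X_2'=X_2$ and $\widehat\psi(Z)=Z$, so $\phi:=\widehat\psi\circ\phi_0$ works after undoing the reduction. The main obstacle I anticipate is the bookkeeping in making $M(Z)$ and $\Lambda'(Z)$ genuinely canonical (or canonical up to the $Z$-stabilizer acting compatibly on both splittings), since Theorem~\ref{isotropic_submodule_bases} only asserts existence of such a splitting; one resolves this by noting that $\Lambda'(Z)=(Z^\#)^{\perp}\cap\Lambda$ up to the stabilizer, using square-freeness of $N$ exactly as in the proof of that theorem.

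Finally, for the last sentence on representatives: combining the two directions, the orbit of a pair $X\supseteq Z$ with $Z=\rad(X)$ is determined by the pair consisting of $d(Z)R^\times$ (equivalently the $Sp(\Lambda)$-orbit of $Z$, by Corollary~\ref{orbits_isotropic}) and the $Sp(\Lambda'(Z))$-orbit of $X'=X\cap\Lambda'(Z)$, which by the ``if'' direction is non-degenerate precisely because $Z=\rad(X)$ (the last clause of Theorem~\ref{isotropic_submodule_bases}). So a transversal is obtained by choosing one $Z=Z_d$ for each admissible $d\mid D$ with $d\mid N^r$, $\tfrac{D}{d}\mid N^{m-r}$ (the admissibility range from Corollary~\ref{orbits_isotropic}), then for each such $Z_d$ letting $X'$ range over representatives of the $Sp(\Lambda'(Z_d))$-orbits of non-degenerate sublattices of $\Lambda'(Z_d)$, and setting $X=Z_d\perp X'$. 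I would close by remarking that distinct choices give inequivalent pairs by the ``only if'' direction, so this is indeed an exact set of representatives.
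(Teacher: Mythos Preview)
Your overall approach is exactly the paper's: the corollary is introduced there as ``an obvious consequence of Theorem~\ref{isotropic_submodule_bases} and Corollary~\ref{orbits_isotropic}'' and carries no further proof, so what you have written is a spelling-out of that deduction. The strategy (reduce to $Z_1=Z_2$ via Corollary~\ref{orbits_isotropic}, then extend an isometry of the complements by the identity on $M$) is correct. However, two of the justifications you give in the ``if'' direction are wrong as stated and should be replaced.

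First, $M(Z)$ is non-degenerate (it is an orthogonal sum of rank-two planes $Re_i\oplus Rf_i$ with $\langle e_i,f_i\rangle=d_i\ne 0$), so $\rad(M(Z))=0$, not $Z$; what is true is that $Z$ is maximal totally isotropic in $M(Z)$. Second, neither $M(Z)$ nor $\Lambda'(Z)$ is canonical: the $f_i$ in Theorem~\ref{isotropic_submodule_bases} are only well-defined modulo $Z$ and modulo the complement, so different choices produce genuinely different splittings, and the formula $\Lambda'(Z)=(Z^{\#})^{\perp}\cap\Lambda$ does not pin down a unique submodule.

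What actually makes your reduction go through is not canonicity but transitivity of the stabilizer of $Z$ on such splittings. Given two decompositions $\Lambda=M_1\perp\Lambda_1'=M_2\perp\Lambda_2'$ attached to the same $Z$, pick para-symplectic bases $(e_i,f_i)$ of $M_1$ and $(e_i',f_i')$ of $M_2$ with the $e_i$, $e_i'$ bases of $Z$ and the same $d_i$ (possible by Lemma~\ref{linear_algebra}); since $\Lambda_1'\cong\Lambda_2'$ by cancellation, the map $\eta$ sending $e_i\mapsto e_i'$, $f_i\mapsto f_i'$ and extending by any isometry $\Lambda_1'\to\Lambda_2'$ lies in $Sp(\Lambda)$, satisfies $\eta(Z)=Z$, and carries the first splitting to the second. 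After composing with $\eta$ you are honestly in the situation $M_1=M_2$, $\Lambda_1'=\Lambda_2'$, and then $\mathrm{id}_M\perp\psi$ finishes exactly as you wrote. The same device (applied to $\phi(M_1)$ versus $M_2$) cleans up the ``only if'' direction as well.
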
 
If $X\subseteq \Lambda$ is a fixed maximal totally isotropic
submodule and $P=P_X\subseteq Sp(\Lambda)$ its stabilizer, we may
therefore identify the set of all maximal totally isotropic submodules
with the coset  space $\{ gP\mid g\in Sp(\Lambda)\}$, and 
for an orthogonal splitting
$\Lambda=\Lambda_1 \perp \Lambda_2$
the orbits of maximal totally isotropic submodules of $\Lambda$ 
under the action of $Sp(\Lambda_1)\times
Sp(\Lambda_2)\subseteq Sp(\Lambda)$ correspond bijectively to
the double cosets $(Sp(\Lambda_1)\times
Sp(\Lambda_2)) gP$ with   $g\in Sp(\Lambda)$. Garrett gave explicit
representatives for these double cosets for $\Lambda$ of level $1$ in
\cite{garrett}. We will generalize his result to arbitrary square
free level by finding representatives of the orbits of maximal totally
isotropic submodules of $\Lambda$  
under the action of $Sp(\Lambda_1)\times
Sp(\Lambda_2)\subseteq Sp(\Lambda)$.
\begin{theorem}\label{xdecomp}
  Let $\Lambda$ be a free module over  the %
  commutative ring $R$ with an
   alternating bilinear form $\langle,\rangle$.

  Let $\Lambda_1,\Lambda_2$ be mutually orthogonal (with respect to
  $\langle, \rangle$) submodules of
  $\Lambda$ with %
  $\Lambda=\Lambda_1 \perp \Lambda_2$ and denote by $\pi_1,\pi_2$ the orthogonal 
projections from $\Lambda$ onto $\Lambda_1,\Lambda_2$.

Let $X$ be a  totally isotropic submodule of $\Lambda$ and
write $X_i=\pi_i(X)$ for $i=1,2$. %

Then there exists an isometry $\phi$ from the bilinear module
$(X_1/\rad(X_1), \langle, \rangle)$ to 
$(X_2/\rad(X_2),-\langle, \rangle)$ such that
\begin{equation*}
  X=\{x_1+x_2 \mid x_i \in X_i, x_2+\rad(X_2)=\phi(x_1+\rad(X_1))\}.
\end{equation*}
If $X$ is maximal totally isotropic  the triple $(X_1,X_2,\phi)$ is maximal
with respect to this property.

Conversely, 
for $i=1,2$ let $Y_i\subseteq \Lambda_i$ be
submodules with an isometry
$\psi:(Y_1,\langle,\rangle)/\rad(Y_1)\to (Y_2,-\langle,\rangle)/\rad(Y_2)$

Then 
\begin{equation*}
Y:=\{ (y_1,y_2) \mid y_1\in Y_1, y_2 \in Y_2, \psi(y_1+\rad(Y_1))=y_2+\rad(Y_2)\}  
\end{equation*}
is a totally isotropic submodule of $\Lambda$ with
$\pi_1(Y)=Y_1,\pi_2(Y)=Y_2$.
If $Y_1,Y_2,\psi$ are maximal, $Y$ is  a maximal totally isotropic
submodule of $\Lambda$.
\end{theorem}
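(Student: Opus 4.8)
The plan is to prove the two directions separately, since the statement has a ``forward'' part (starting from a totally isotropic $X$, producing the triple $(X_1,X_2,\phi)$) and a ``converse'' part (starting from $(Y_1,Y_2,\psi)$, producing $Y$).

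For the forward direction, I would first check that $\rad(X_i)\subseteq X_i$ is well defined and that the bilinear form $\langle,\rangle$ descends to a \emph{nondegenerate} form on $X_i/\rad(X_i)$; this is immediate from the definition of the radical. The key point is to produce the map $\phi$. Given $x_1\in X_1$, pick any $x\in X$ with $\pi_1(x)=x_1$, and set $x_2=\pi_2(x)$; I would define $\phi(x_1+\rad(X_1)):=x_2+\rad(X_2)$. Well-definedness: if $x,x'\in X$ both project to $x_1$ under $\pi_1$, then $x-x'\in X$ has $\pi_1(x-x')=0$, i.e.\ $x-x'\in X_2\cap X=X\cap\Lambda_2$; I must show such an element lies in $\rad(X_2)$. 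This uses that $X$ is totally isotropic: for any $y\in X$, writing $y=\pi_1(y)+\pi_2(y)$ and using orthogonality of $\Lambda_1,\Lambda_2$, we get $0=\langle x-x',y\rangle=\langle x-x',\pi_2(y)\rangle$, and as $\pi_2(y)$ ranges over $X_2$ this shows $x-x'\in\rad(X_2)$. The same computation, run the other way, shows $\phi$ is an isometry onto its image, that the image is all of $X_2/\rad(X_2)$ (by symmetry of the roles of $1,2$, or just because $\pi_2$ is surjective onto $X_2$), that $\phi$ reverses the sign of the form (because $X$ is isotropic: $0=\langle x_1+x_2,\,x_1'+x_2'\rangle=\langle x_1,x_1'\rangle+\langle x_2,x_2'\rangle$ gives $\langle x_2,x_2'\rangle=-\langle x_1,x_1'\rangle$), and that the displayed description of $X$ holds. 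The ``$\subseteq$'' in that description is what we just built; the ``$\supseteq$'' follows because any $x_1+x_2$ with $x_2+\rad(X_2)=\phi(x_1+\rad(X_1))$ differs from an element of $X$ by an element of $\rad(X_2)\subseteq X$ (note $\rad(X_2)\subseteq X_2\cap X$ since representatives of $0$ are hit).

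For the converse, given $(Y_1,Y_2,\psi)$ I would verify directly that $Y$ as defined is a submodule (clear: it is the preimage of the graph of $\psi$ under $Y_1\oplus Y_2\to (Y_1/\rad Y_1)\oplus(Y_2/\rad Y_2)$, and the graph of a module isometry is a submodule), that $\pi_i(Y)=Y_i$ (surjectivity of each $Y_j\to Y_j/\rad(Y_j)$ plus surjectivity of $\psi$), and that $Y$ is totally isotropic: for $(y_1,y_2),(y_1',y_2')\in Y$ we compute $\langle(y_1,y_2),(y_1',y_2')\rangle=\langle y_1,y_1'\rangle+\langle y_2,y_2'\rangle$; now $\langle y_2,y_2'\rangle$ only depends on the classes mod $\rad(Y_2)$, which equal $\psi(y_i+\rad Y_1)$, and since $\psi$ is an isometry into $(Y_2,-\langle,\rangle)/\rad(Y_2)$ this equals $-\langle y_1,y_1'\rangle$, so the sum is $0$.

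The maximality statements require a short argument in both directions. For the maximality of the triple $(X_1,X_2,\phi)$ when $X$ is maximal totally isotropic: if $(X_1',X_2',\phi')$ were a strictly larger triple (meaning $X_i\subseteq X_i'$ with at least one containment or the isometry extension strict), applying the converse construction to it would produce a totally isotropic $Y'\supsetneq X$, contradicting maximality of $X$; here one checks the construction is order-preserving and that enlarging any of the three data enlarges $Y'$. Conversely, if $Y_1,Y_2,\psi$ are maximal and $Y\subsetneq Y''$ with $Y''$ totally isotropic, apply the forward construction to $Y''$ to get $(Y_1'',Y_2'',\psi'')$ with each $Y_j\subseteq Y_j''$ and $\psi''$ extending $\psi$ in the appropriate sense, with at least one of these strict, contradicting maximality of the triple. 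The main obstacle, and the place to be careful, is getting the bookkeeping of ``maximality of a triple'' precise --- i.e.\ formulating the partial order on triples $(Y_1,Y_2,\psi)$ under which the two constructions are mutually inverse and order-preserving --- and verifying that $\rad$ behaves correctly under the enlargements (in particular that when $Y\subsetneq Y''$ the radicals can only grow in a controlled way so that $\psi$ genuinely extends). All the bilinear-form computations themselves are the one-line identities indicated above; only the maximality correspondence needs real care.
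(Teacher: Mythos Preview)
Your overall strategy matches the paper's: construct $\phi$ via the homomorphism theorem, using total isotropy of $X$ for well-definedness, verify the sign-reversing isometry property from $\langle x_1,x_1'\rangle+\langle x_2,x_2'\rangle=0$, and treat the converse and the two maximality assertions by the natural back-and-forth. (The paper's argument for maximality of $Y$ is slightly more hands-on---it adjoins a single vector $w$ to $Y$ and checks $\pi_i(w)\in Y_i$ directly---but your abstract version works as well.)

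There is, however, a genuine error in your argument for the inclusion ``$\supseteq$'' in the displayed description of $X$: the claim $\rad(X_2)\subseteq X$ is false in general. Take $\Lambda_i=Re_i\oplus Rf_i$ with $\langle e_i,f_i\rangle=1$ and set $X=R(e_1+e_2)$; then $X_2=Re_2=\rad(X_2)$ but $e_2\notin X$, and indeed the right-hand set equals $Re_1\oplus Re_2\supsetneq X$, so the stated equality itself fails for this non-maximal $X$. What the well-definedness computation actually yields is the \emph{opposite} inclusion $X\cap\Lambda_2\subseteq\rad(X_2)$; your phrase ``representatives of $0$ are hit'' encodes precisely that direction, not the one you need. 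In fact only $X\subseteq\{x_1+x_2:\ldots\}$ holds for an arbitrary totally isotropic $X$, and the paper's own proof establishes no more than this. Equality does hold when $X$ is maximal, but the correct route is via the converse construction together with maximality of $X$ (the right-hand side is totally isotropic and contains $X$), not via the false inclusion $\rad(X_2)\subseteq X$.
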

\begin{proof}
Let $v_1=\pi_1(x) \in X_1$ with $x=v_1+v_2 \in X, v_2 \in X_2$. If
$v_1=\pi_1(v_1+v_2')$ for some $v_2'\in X_2$ and $w_2=\pi_2(y) \in X_2$
is arbitrary, we have 
\begin{equation*}
  \langle v_2-v_2',w_2\rangle = \langle v_2-v_2', y\rangle=0,
\end{equation*}
and hence $v_2+\rad(X_2)=v_2'+\rad(X_2)$
since $v_2-v_2'=(v_1+v_2)-(v_1+v_2')\in X$ and $X$ is totally
isotropic.  

Moreover, if $v_2 \in \rad(X_2)$ holds and $w_1=\pi_1(y)\in X_1$ is
arbitrary with $y=w_1+w_2, w_2\in X_2$ we have
\begin{eqnarray*}
  \langle w_1,v_1\rangle&=&\langle w_1,v_1+v_2\rangle\\
&=&\langle w_1+w_2, v_1+v_2 \rangle\\
&=&\langle y,x\rangle\\
&=&0,
\end{eqnarray*}
since $v_2 \in \rad(X_2)$ gives $\langle w_2,v_2\rangle=0$ and since
$X$ is totally isotropic.
Conversely, it is easy to see that  a vector $v_1\in \rad(X_1)$  gives
a vector $v_2\in \rad(X_2)$.

By the homomorphism theorem we obtain hence a map $\phi:X_1/\rad(X_1)
\to X_2/\rad(X_2)$ with $\phi(v_1)+\rad(X_1)=v_2+\rad(X_2)$ for all
$v_1+v_2 \in X$ with $v_i \in \Lambda_i$ for $i=1,2$.

For $v_1=\pi_1(v_1+v_2), v_1'=\pi_1(v_1'+v_2')$ we have
\begin{equation*}
 \langle v_1,v_1'\rangle +\langle v_2,v_2'\rangle=\langle
 v_1+v_2,v_1'+v_2'\rangle =0 
\end{equation*}
since $X$ is totally isotropic, so $\phi$ is indeed an isometry.

If on the other hand $Y_1,Y_2,\psi, Y$ are as in the assertion it is clear that $Y$ is a
totally isotropic submodule of $\Lambda$ with
$\pi_1(Y)=Y_1,\pi_2(Y)=Y_2$. If $Y_i\supseteq X_i$ for $i=1,2$ and
$\psi$ extends $\phi$ one has $Y\supseteq X$, and one sees that the
maximality of $X$ implies the maximality of $X_1,X_2, \phi$. If $w \in \Lambda$ is such that $Y+Rw$
is totally isotropic, $\tilde{Y_1}:=Y_1+R\pi_1(w),
\tilde{Y_2}:=Y_2+R\pi_2(w)$ have the same properties as $Y_1,Y_2$, so
by the assumed maximality we have $\pi_1(w)\in Y_1, \pi_2(w)\in Y_2$,
which implies $w \in Y$. Hence the maximality of $Y_1,Y_2,\psi$
implies that $Y$ is indeed a maximal totally
isotropic submodule of $\Lambda$. 
\end{proof}
\begin{remark}
  In the situation of the theorem with $X$ maximal let $R$ be an
  integral domain with 
  field of fractions $F$ and let $\Lambda, \Lambda_1, \Lambda_2$ be
  lattices of full rank on the $F$-vector spaces $V,V_1,V_2$. Applying
  the theorem to $FX\subseteq V, V_1,V_2$  we obtain subspaces
  $U_i=\pi_i(FX)\subseteq V_i$ and an isometry $\phi:U_1/\rad(U_1)\to
  U_2/\rad(U_2)$.

  We have then $X_1=(U_1\cap \Lambda_1)\cap
  \phi^{-1}(U_2\cap \Lambda_2)$ and $X_2=(U_2 \cap
  \Lambda_2)\cap\phi(U_1\cap \Lambda_1)$ and may view the map
  $X_1/\rad(X_1)\to X_2/\rad(X_2)$ as the restriction of the map
  $U_1/\rad(U_1)\to U_2/\rad(U_2)$ (with the natural embedding of
  $X_i/\rad(X_i)$ into $U_i/\rad(U_i)$).

  Conversely, given subspaces
  $U_i\subseteq V_i$ and an isometric map $\phi$ as above we can set $X_1=(U_1\cap \Lambda_1)\cap
  \phi^{-1}(U_2\cap \Lambda_2)$ and $X_2=(U_2 \cap
  \Lambda_2)\cap\phi(U_1\cap \Lambda_1)$ and retrieve the associated
  maximal totally isotropic submodule $X$ of $\Lambda$ as described in the theorem.
\end{remark}
\begin{lemma}\label{ranks}
  With notations as before let now $R$ be a principal ideal domain and
 $\Lambda_1,\Lambda_2$ finitely generated free $R$-modules, let $2m:=\rk(\Lambda_1),
 2n:=\rk(\Lambda_2)$ and assume that 
$\Lambda, \Lambda_1, \Lambda_2$ with the alternating form
$\langle,\rangle$ are nondegenerate alternating modules. Let $X$ be a
maximal totally isotropic submodule of $\Lambda$, put  
 $2r:=\rk(X_1/\rad(X_1))=\rk(X_2/\rad(X_2)$, $m_1=\rk(\rad(X_1)), n_1=\rk(\rad(X_2))$.

 Then $m_1=m-r, n_1=n-r$.
\end{lemma}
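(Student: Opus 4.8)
The plan is to reduce the statement to a dimension count in symplectic vector spaces over $F$. Tensoring with $F$, put $V=V_1\perp V_2=F\otimes_R\Lambda$, $U:=FX$ and $U_i:=FX_i=\pi_i(U)$. Since $F$ is flat over $R$, tensoring commutes with forming intersections and quotients of submodules of a fixed module, and $\rk_R M=\dim_F(F\otimes_R M)$, so every rank occurring turns into a dimension; in particular, as in the proof of Theorem~\ref{xdecomp} one has $\rad(X_i)=X\cap\Lambda_i$, hence $\rad(U_i)=U\cap V_i$ and $\rk\rad(X_i)=\dim\rad(U_i)$, and likewise $\rk(X_i/\rad(X_i))=\dim(U_i/\rad(U_i))=2r$. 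Moreover $X$ being maximal totally isotropic in $\Lambda$ forces $U$ to be maximal totally isotropic in $V$: given $u\in V$ with $Fu+U$ totally isotropic, pick $0\ne c\in R$ with $w:=cu\in\Lambda$; then $\langle w,X\rangle=0$ and $\langle w,w\rangle=0$, so $X+Rw$ is totally isotropic and, unless $u\in U$, strictly contains $X$, a contradiction. Hence $U$ is a Lagrangian of the $2(m+n)$-dimensional non-degenerate symplectic space $V$, so $\dim U=m+n$ and $U^{\perp}=U$. It thus suffices to show $\dim\rad(U_1)=m-r$ and $\dim\rad(U_2)=n-r$.

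The key point is the identity $U_1^{\perp}=U\cap V_1=\rad(U_1)$, where $U_1^{\perp}$ is the orthogonal complement taken inside $V_1$. Indeed, for $v\in V_1$ and $u\in U$ one has $\langle v,u\rangle=\langle v,\pi_1(u)\rangle$ because $v\perp V_2$, so $\langle v,U_1\rangle=0$ iff $\langle v,U\rangle=0$ iff $v\in U^{\perp}\cap V_1=U\cap V_1$; this gives $U_1^{\perp}=U\cap V_1$, and the latter equals $U_1\cap U_1^{\perp}=\rad(U_1)$. Now I count: non-degeneracy of the form on $V_1$ gives $\dim U_1^{\perp}=2m-\dim U_1$, while by definition $\dim U_1=\dim\rad(U_1)+2r$; combining with $\dim U_1^{\perp}=\dim\rad(U_1)$ yields $2\dim\rad(U_1)=2m-2r$, i.e. $m_1=\dim\rad(U_1)=m-r$. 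Finally $\pi_1\colon U\to U_1$ is surjective with kernel $U\cap V_2=\rad(U_2)$, so $m+n=\dim U=\dim\rad(U_2)+\dim U_1=\dim\rad(U_2)+(m-r)+2r$, whence $n_1=\dim\rad(U_2)=n-r$ (alternatively this follows from the symmetric version of the previous computation).

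I expect the only genuine care to be in the bookkeeping: checking that flatness together with the identity $\rad(X_i)=X\cap\Lambda_i$ really carries all the relevant ranks from $R$ to $F$, and keeping straight which orthogonal complement is taken in which space in $U_1^{\perp}=U\cap V_1$. Beyond that the argument is a short dimension count, so I anticipate no substantial obstacle.
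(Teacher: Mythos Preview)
Your argument is correct. Passing to $F$ is harmless since all the ranks in the statement agree with the corresponding $F$-dimensions (over a PID, $\rad(X_i)$ is the kernel of the map $X_i\to X_i^*$, so tensoring with $F$ gives $\rad(U_i)$, and likewise for the quotients). Your key identity $U_1^{\perp}=U\cap V_1=\rad(U_1)$ is exactly right once $U$ is Lagrangian, and the dimension count then goes through.

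The paper's proof is different and shorter: it stays over $R$ and reads off $\rk(X)=m_1+2r+n_1$ directly from the fibered description $X=\{x_1+x_2\mid \phi(x_1+\rad(X_1))=x_2+\rad(X_2)\}$ of Theorem~\ref{xdecomp}, combines this with $\rk(X)=m+n$, and then squeezes using the two inequalities $m_1+r\le m$, $n_1+r\le n$ (which hold because $\rad(X_i)$ together with a maximal isotropic of $X_i/\rad(X_i)$ is totally isotropic in $\Lambda_i$). Your route trades that squeeze argument for the orthogonal-complement dimension formula $\dim U_1^{\perp}=2m-\dim U_1$; this makes the equality $m_1=m-r$ come out of a single linear equation rather than an equation plus two inequalities, at the cost of base-changing to $F$ and checking a couple of compatibilities. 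Either way the content is the same, but the paper's argument leans more directly on the structure theorem just proved, whereas yours is self-contained symplectic linear algebra.
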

\begin{proof}
  From $X=\{x_1+x_2\mid x_1\in
X_1,x_2\in X_2, \phi(x_1+\rad(X_1))=x_2+\rad(X_2)\}$ with $\phi$ as in
the proof of the theorem one sees that
$m+n=\rk(X)=m_1+2r+n_1$ holds, and since $m$ resp. $n$ is the
dimension of a maximal totally isotropic submodule of $\Lambda_1$
resp.\ $\Lambda_2$ we have $m_1+r\le m, n_1+r \le n$. Taken together
we obtain $m_1=m-r, n_1=n-r$.
\end{proof}
\begin{lemma}\label{cancellationlemma}
 Let $\Lambda_1\subseteq V_1, \Lambda_2\subseteq V_2$ be
integral $\Z$-lattices of square free level $N$ in the regular symplectic spaces
$V_1,V_2$ over $\Q$, 
we set $V=V_1\perp V_2, \Lambda=\Lambda_1\perp
\Lambda_2$. Let $\pi_1,\pi_2$ denote the orthogonal projections from
$\Lambda$ onto $\Lambda_1, \Lambda_2$.

Let $X$ be a maximal totally isotropic submodule of $\Lambda$ and put
$X_i=\pi_i(X)$ for $i=1,2$, let $Z_i=X\cap \Lambda_i=\rad(X_i)$. Then
there are primitive sublattices $M_i\subseteq \Lambda_i, M_i\supseteq Z_i$ and
$Z_i$ maximal totally isotropic in $M_i$ with
$\Lambda_i=M_i\perp \Lambda_i'$.

With $M=M_1 \perp M_2, Z=Z_1\perp Z_2, \Lambda'=\Lambda_1'\perp
\Lambda_2', X'=X \cap \Lambda'$ one has $X=Z \perp X'$ and
$X_1=\pi_1(X')\perp Z_1, X_2=\pi_2(X')\perp Z_2$.

In particular, the $\pi_i(X')$ are nondegenerate isometric to
$X_i/\rad(X_i)$, and the isomorphism $\phi: X_1/\rad(X_1)\to
X_2/\rad(X_2)$ of Theorem \ref{xdecomp} induces an isomorphism
$\phi':\pi_1(X')\to \pi_2(X')$.
\end{lemma}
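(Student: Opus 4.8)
The plan is to build the orthogonal splittings of the $\Lambda_i$ directly from Theorem \ref{isotropic_submodule_bases}, and then use Theorem \ref{xdecomp} to transport this splitting to $\Lambda$ and $X$. First I would observe that $Z_i = X \cap \Lambda_i$: indeed $X \cap \Lambda_i \subseteq X_i$ since $\pi_i$ fixes $\Lambda_i$, and $X \cap \Lambda_i$ is totally isotropic and, as shown in the proof of Theorem \ref{xdecomp}, $\rad(X_i)$ is exactly the image under $\pi_i$ of those $x = x_1 + x_2 \in X$ with $x_2 \in \rad(X_2)$; a short argument (using that $X$ is totally isotropic and that $\Lambda_1 \perp \Lambda_2$) identifies $\rad(X_i)$ with $X \cap \Lambda_i$. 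In particular $Z_i$ is a primitive totally isotropic submodule of $\Lambda_i$, so Theorem \ref{isotropic_submodule_bases} applies: there is a split-off piece $M_i \supseteq Z_i$ with $Z_i$ maximal totally isotropic in $M_i$ and $\Lambda_i = M_i \perp \Lambda_i'$. This gives the first assertion.

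Next I would set $M = M_1 \perp M_2$, $Z = Z_1 \perp Z_2$, $\Lambda' = \Lambda_1' \perp \Lambda_2'$, $X' = X \cap \Lambda'$, and apply the "moreover" clause of Theorem \ref{isotropic_submodule_bases} to $\Lambda$ with the primitive totally isotropic submodule $Z$ and the sublattice $X$. Here I need $Z \subseteq \rad(X)$: but $X$ is maximal totally isotropic, so $\rad(X) = X$, and $Z = Z_1 \perp Z_2 = (X \cap \Lambda_1) \perp (X \cap \Lambda_2) \subseteq X$ by the first part, so the hypothesis holds. The theorem then yields $X = Z \perp (X \cap \Lambda')= Z \perp X'$. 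Applying $\pi_i$ to this orthogonal decomposition, and using that $\pi_i$ respects the splitting $\Lambda = (M_1 \perp M_2) \perp (\Lambda_1' \perp \Lambda_2')$ — so $\pi_i(Z) = Z_i$ and $\pi_i(X') \subseteq \Lambda_i'$ — gives $X_i = \pi_i(X) = Z_i \perp \pi_i(X')$.

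For the last assertion: from $X = Z \perp X'$ with $Z$ totally isotropic and $X$ maximal totally isotropic, one checks that $X'$ is maximal totally isotropic in $\Lambda'$ and that $X'$ has trivial radical in $\Lambda'$ (otherwise a nonzero radical vector of $X'$, being orthogonal to all of $X' $ and to $Z$, would lie in $\rad(X) = X$ only if $X$ were non-maximal, contradicting maximality; more directly, the "if and only if" clause of Theorem \ref{isotropic_submodule_bases} says $X' = X \cap \Lambda'$ is non-degenerate precisely because $Z = \rad(X)$ here). Then each $\pi_i(X')$ inherits non-degeneracy from the fact that $X'$, being the graph-type module attached to $(\pi_1(X'), \pi_2(X'), \phi')$ as in Theorem \ref{xdecomp}, has $\rad(\pi_i(X'))$ mapping into $\rad(X') = 0$; hence $\pi_i(X')$ is non-degenerate and $\phi'$ is an honest isometry $\pi_1(X') \to \pi_2(X')$. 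Finally, comparing $X = Z_1 \perp Z_2 \perp X'$ with the description $X = \{x_1 + x_2 : \phi(x_1 + \rad(X_1)) = x_2 + \rad(X_2)\}$ of Theorem \ref{xdecomp}, and using $X_i = Z_i \perp \pi_i(X')$ with $Z_i = \rad(X_i)$, identifies $\pi_i(X')$ with a set of representatives for $X_i/\rad(X_i)$ and shows $\phi'$ is the map induced by $\phi$; this also re-proves the isometries $\pi_i(X') \cong X_i/\rad(X_i)$.

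The main obstacle I anticipate is the clean bookkeeping around the identifications $Z_i = X \cap \Lambda_i = \rad(X_i)$ and the compatibility of the projections $\pi_i$ with \emph{both} orthogonal splittings $\Lambda = \Lambda_1 \perp \Lambda_2$ and $\Lambda = M \perp \Lambda'$ simultaneously — one must be careful that the $M_i, \Lambda_i'$ produced by Theorem \ref{isotropic_submodule_bases} are submodules of $\Lambda_i$ (they are, by construction in that theorem) so that $\pi_i$ genuinely sends $M$ to $M_i$ and $\Lambda'$ to $\Lambda_i'$. Everything else is a routine consequence of the two quoted theorems.
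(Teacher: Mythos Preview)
Your approach is essentially the same as the paper's: apply Theorem~\ref{isotropic_submodule_bases} in each $\Lambda_i$ to split off $M_i$, then use its ``moreover'' clause to obtain $X=Z\perp X'$, and finally read off the induced $\phi'$ from Theorem~\ref{xdecomp}. The paper's proof says exactly this, only more tersely.

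There is, however, a genuine slip in your non-degeneracy paragraph. You write that $X'$ ``has trivial radical in $\Lambda'$'' and later that $\rad(X')=0$. This is false: $X'$ is a maximal totally isotropic submodule of $\Lambda'$, so $\rad(X')=X'$. Correspondingly, your appeal to the ``if and only if'' clause of Theorem~\ref{isotropic_submodule_bases} is misapplied: in that clause the condition for $X\cap\Lambda'$ to be non-degenerate is $Z=\rad(X)$, but here $\rad(X)=X$ (since $X$ is totally isotropic) and $Z\subsetneq X$ in general. So neither argument you give for the non-degeneracy of $\pi_i(X')$ works as stated.

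The correct argument is short. Since $X'$ is maximal totally isotropic in $\Lambda'=\Lambda_1'\perp\Lambda_2'$, the identification $\rad(\pi_i(X'))=X'\cap\Lambda_i'$ holds by the same reasoning you used for $Z_i=\rad(X_i)=X\cap\Lambda_i$ (any $z\in\rad(\pi_i(X'))$ is orthogonal to all of $X'$, hence lies in $X'$ by maximality, hence in $X'\cap\Lambda_i'$). But $X'\cap\Lambda_i'\subseteq X\cap\Lambda_i=Z_i\subseteq M_i$, and $M_i\cap\Lambda_i'=0$, so $\rad(\pi_i(X'))=0$. This gives the non-degeneracy of $\pi_i(X')$ directly, and the rest of your argument then goes through.
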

\begin{proof}
The lattices $M_i$ are obtained from Theorem
\ref{isotropic_submodule_bases}, which also implies that one
has $X=Z \perp X'$ and 
$X_1=\pi_1(X')\perp Z_1, X_2=\pi_2(X')\perp Z_2$ as asserted.
That $\phi$ induces $\phi'$ as asserted is clear.
\end{proof}
\begin{lemma}\label{orbit_char}
  In the situation of the previous lemma let (using the notation of
  Corollary \ref{orbits_isotropic})  $d:=d(Z_1),d'=d(Z_2)$.  Then the orbit of the submodule
  $X$ under the
  action of $Sp(\Lambda_1)\times Sp(\Lambda_2)\subseteq Sp(\Lambda)$
determines   $d,d'$ and the orbit of $X'$ under the action of
  $Sp(\Lambda_1')\times Sp(\Lambda_2')\subseteq Sp(\Lambda')$ and is
  conversely determined by these data.
\end{lemma}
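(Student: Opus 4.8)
The plan is to recognize the $Sp(\Lambda_1)\times Sp(\Lambda_2)$-orbit of $X$ as the combination of two independent pieces: the $Sp(\Lambda_i)$-orbits of the radical pieces $Z_i=X\cap\Lambda_i$, which by Corollary \ref{orbits_isotropic} amount precisely to $d$ and $d'$ (the ranks of the $Z_i$ being forced, via Lemma \ref{ranks}, by the rank of the ambient module of $X'$); and, once the $Z_i$ are pinned down, the orbit of $X'$ under the residual symmetry. The first thing I would fix is a \emph{canonical} home for $\Lambda_i'$ and $X'$, since a priori these depend on the auxiliary choices in Lemma \ref{cancellationlemma}. As $\Lambda_i=M_i\perp\Lambda_i'$ with $Z_i$ maximal totally isotropic in the nondegenerate $M_i$ (Lemma \ref{cancellationlemma}, which uses square-freeness of the level via Theorem \ref{isotropic_submodule_bases}), one has $Z_i^{\perp}=Z_i\perp\Lambda_i'$ inside $\Lambda_i$, so inclusion followed by projection gives a canonical isometry $\Lambda_i'\cong Z_i^\perp/Z_i$ that does not involve $M_i$; likewise, with $Z=Z_1\perp Z_2$ one has $X\subseteq Z^\perp$ and $Z^\perp/Z\cong (Z_1^\perp/Z_1)\perp(Z_2^\perp/Z_2)$, under which $X/Z$ corresponds to $X'$. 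Thus I take $\Lambda_i':=Z_i^\perp/Z_i$ and view $X'\subseteq\Lambda_1'\perp\Lambda_2'$ as attached functorially to $(X,\Lambda_1,\Lambda_2)$, with $Sp(\Lambda_i')$ the isometry group of $Z_i^\perp/Z_i$.

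For the forward direction, write $\tilde X=(g_1\times g_2)X$ with $g_i\in Sp(\Lambda_i)$. Then $\tilde Z_i=g_i(Z_i)$, so $Z_i$ and $\tilde Z_i$ lie in one $Sp(\Lambda_i)$-orbit, and Corollary \ref{orbits_isotropic} gives $d(\tilde Z_1)=d$ and $d(\tilde Z_2)=d'$; hence $d,d'$ are read off from the orbit of $X$. Since $g_i(Z_i)=\tilde Z_i$, the map $g_i$ induces an isometry $\bar g_i\colon Z_i^\perp/Z_i\to \tilde Z_i^\perp/\tilde Z_i$, and by the functorial description above $(\bar g_1\perp\bar g_2)(X')=\tilde X'$. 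Conjugation by $\bar g_i$ identifies $Sp(\Lambda_i')$ with $Sp(\tilde\Lambda_i')$, so after transporting both $\Lambda_i'$ and $\tilde\Lambda_i'$ to a once-and-for-all chosen module of the prescribed rank and determinant (unique up to isometry by Lemma \ref{linear_algebra}) the $Sp(\Lambda_1')\times Sp(\Lambda_2')$-orbit of $X'$ equals that of $\tilde X'$; this makes ``the orbit of $X'$'' well defined and shows it depends only on the orbit of $X$.

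For the converse the key group-theoretic input is that $\operatorname{Stab}_{Sp(\Lambda_i)}(Z_i)$ surjects onto $Sp(\Lambda_i')=Sp(Z_i^\perp/Z_i)$: given $\sigma_i\in Sp(\Lambda_i')$, pick any splitting $\Lambda_i=M_i\perp\Lambda_i'$ and take $\mathrm{id}_{M_i}\perp\sigma_i\in Sp(\Lambda_i)$, which fixes $Z_i$ and induces $\sigma_i$ on $Z_i^\perp/Z_i$. Granting this, I fix representatives $Z_1,Z_2$ with invariants $d,d'$ (they exist and are unique up to $Sp(\Lambda_i)$ by Corollary \ref{orbits_isotropic}), identify $\Lambda_i'$ with the fixed model, and define $[X']\mapsto[\,Z_1\perp Z_2\perp X'\,]$. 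This is well defined: if $X''=(\sigma_1\times\sigma_2)X'$ with $\sigma_i\in Sp(\Lambda_i')$, lifting the $\sigma_i$ to $\operatorname{Stab}_{Sp(\Lambda_i)}(Z_i)$ shows $Z_1\perp Z_2\perp X''$ and $Z_1\perp Z_2\perp X'$ lie in one orbit; and a different choice of the $Z_i$ is $Sp(\Lambda_i)$-conjugate to this one, moving the whole construction inside a single orbit. Finally this inverts the forward map: since $\pi_i(X')$ is nondegenerate (Lemma \ref{cancellationlemma}) one has $X'\cap\Lambda_i'=0$, hence $(Z_1\perp Z_2\perp X')\cap\Lambda_i=Z_i$ with invariant $d$ resp.\ $d'$, and the $X'$-part of $Z_1\perp Z_2\perp X'$, computed as $(Z_1\perp Z_2\perp X')/Z\subseteq Z^\perp/Z$, is again $X'$. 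I expect essentially all the work to be this bookkeeping — verifying that ``the orbit of $X'$'' survives the choices of $M_i$, of the orbit representative of $X$, and of the identification $\Lambda_i'\cong Z_i^\perp/Z_i$ — for which the canonical description of $\Lambda_i'$ and the surjectivity onto $Sp(\Lambda_i')$ are precisely designed; everything else is immediate from Lemma \ref{cancellationlemma} and Corollary \ref{orbits_isotropic}.
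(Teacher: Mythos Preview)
Your argument is correct and follows the same route the paper indicates: the paper's own proof is the single sentence ``This follows from the previous lemma and Corollary \ref{orbits_isotropic},'' and what you have written is precisely the unpacking of that sentence. Your extra care in replacing the choice-dependent complements $\Lambda_i'$ by the canonical quotients $Z_i^\perp/Z_i$, and in exhibiting the surjection $\operatorname{Stab}_{Sp(\Lambda_i)}(Z_i)\twoheadrightarrow Sp(Z_i^\perp/Z_i)$, makes explicit the well-definedness of ``the orbit of $X'$'' that the paper leaves to the reader, but it does not change the strategy.
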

\begin{proof}
This follows from the previous lemma and Corollary \ref{orbits_isotropic}.
\end{proof}

By these lemmata we can now restrict our attention to the case that
$\Lambda_1, \Lambda_2$ have equal rank and that the projections
$\pi_1(X), \pi_2(X)$ are nondegenerate alternating modules.

\medskip
\begin{proposition}\label{propo_fullrank}
 Let $\Lambda_1,\Lambda_2$ be $\Z$-lattices of ranks $2m$ on the
 vector spaces $V_1,V_2$ over $\Q$ with
 alternating bilinear forms of square free levels $N_1,N_2$ and
 determinants $D_1,D_2$, let
 $N=\lcm(N_1,N_2)$ and $D=D_1D_2$, let $\Lambda=\Lambda_1 \perp
 \Lambda_2$ and let $X,\hat{X}$ be  maximal 
 totally isotropic submodules of $\Lambda$ for which the projections
 $\pi_i(X)=X_i,\pi_i(\hat{X})=\hat{X}_i$ to $\Lambda_i$ are nondegenerate alternating modules
 for $i=1,2$.

 Let $\phi,\hat{\phi}:V_1\to V_2$ with $X_2=\Lambda_2\cap
 \phi(\Lambda_1)$, $X_1=\Lambda_1 \cap \phi^{-1}(\Lambda_2)$ and
  $\hat{X}_2=\Lambda_2\cap
 \hat{\phi}(\Lambda_1)$, $\hat{X}_1=\Lambda_1 \cap \hat{\phi}^{-1}(\Lambda_2)$ be the
 isometries associated to $\Q X, \Q\hat{X}$ 
 by Theorem \ref{xdecomp} and the remark following it.
 
 Let $W$ be a $2m$-dimensional vector space over $\Q$ with
 nondegenerate alternating bilinear form and $\Sigma_1:V_1 \to W, 
 \Sigma_2:V_2\to W$ be fixed isometries. Then
 \begin{enumerate}
 \item[a)]  For $\sigma_1\in Sp(V_1), \sigma_2 \in Sp(V_2)$ one
   has $\Q\hat{X}=(\sigma_1,\sigma_2)(\Q X)$ if and only if $\sigma_2
   \circ \phi\circ \sigma_1^{-1} =\hat{\phi}$ holds.
   \item[b)] $\hat{X}$ is in the $Sp(\Lambda_1)\times
     Sp(\Lambda_2)$-orbit of $X$ if and only if one has
     \begin{equation*}
       \Sigma_2 \circ \hat{\phi} \circ \Sigma_1^{-1}\in
       Sp(\Sigma_2\Lambda_2)(\Sigma_2\circ \phi \circ \Sigma_1^{-1}) Sp(\Sigma_1\Lambda_1).
     \end{equation*}
 \end{enumerate}
\end{proposition}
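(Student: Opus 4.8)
The strategy is to reduce both assertions to the abstract orbit description of maximal totally isotropic submodules established in Theorem~\ref{xdecomp} and Corollary~\ref{orbits_submodules}, using the dictionary from the remark following Theorem~\ref{xdecomp} that identifies $\Q X$ with the triple $(U_1,U_2,\phi)$, where $U_i=\pi_i(\Q X)$, and then identifies $X$ itself with $X_1=\Lambda_1\cap\phi^{-1}(\Lambda_2)$, $X_2=\Lambda_2\cap\phi(\Lambda_1)$. Since the projections $\pi_i(X)$ are assumed non degenerate, Lemma~\ref{ranks} forces $\rad(X_i)=0$, so $X_1,X_2$ are themselves non degenerate of rank $2m$; hence $U_i=V_i$ and the isometry $\phi:U_1/\rad(U_1)\to U_2/\rad(U_2)$ is simply an isometry $\phi:V_1\to V_2$ (up to the sign convention on the form, which I will carry along silently as in Theorem~\ref{xdecomp}). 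The same applies to $\hat X$ and $\hat\phi$. This is the reduction that makes the statement purely a statement about isometries $V_1\to V_2$ and lattices therein.

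For part~a), I would argue directly from the formula $X=\{x_1+x_2\mid x_i\in X_i,\ \phi(x_1)=x_2\}$, i.e.\ $\Q X$ is the graph $\{(v_1,\phi(v_1))\mid v_1\in V_1\}$ inside $V_1\oplus V_2$. Applying $(\sigma_1,\sigma_2)\in Sp(V_1)\times Sp(V_2)$ sends this graph to $\{(\sigma_1 v_1,\sigma_2\phi(v_1))\mid v_1\in V_1\}=\{(w,\sigma_2\phi\sigma_1^{-1}(w))\mid w\in V_1\}$, which is exactly the graph of $\sigma_2\circ\phi\circ\sigma_1^{-1}$. Two graphs of linear isometries $V_1\to V_2$ coincide as subspaces if and only if the isometries coincide, so $\Q\hat X=(\sigma_1,\sigma_2)(\Q X)$ is equivalent to $\hat\phi=\sigma_2\circ\phi\circ\sigma_1^{-1}$. (One should note that $\sigma_2\phi\sigma_1^{-1}$ is again an isometry $V_1\to V_2$ intertwining the forms with the same sign, so it genuinely qualifies as the $\hat\phi$ attached to $(\sigma_1,\sigma_2)(\Q X)$ by the remark; this is where the fact that $Sp(V_i)$ preserves $\langle,\rangle$ is used.)

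For part~b), the key point is that $\hat X$ lies in the $Sp(\Lambda_1)\times Sp(\Lambda_2)$-orbit of $X$ if and only if there exist $\sigma_i\in Sp(\Lambda_i)$ with $(\sigma_1,\sigma_2)(X)=\hat X$. Since $Sp(\Lambda_i)\subseteq Sp(V_i)$, such $(\sigma_1,\sigma_2)$ in particular send $\Q X$ to $\Q\hat X$, so by part~a) they must satisfy $\hat\phi=\sigma_2\phi\sigma_1^{-1}$; conversely, once $\Q\hat X=(\sigma_1,\sigma_2)(\Q X)$ holds with $\sigma_i\in Sp(\Lambda_i)$, the lattice $X$ is recovered from $\Q X$ and the $\Lambda_i$ by the intersection formula of the remark, and $(\sigma_1,\sigma_2)$ being lattice isometries then automatically carries $X=\Lambda_1\cap\phi^{-1}(\Lambda_2)\oplus\text{(graph)}$ onto the corresponding object for $\hat\phi$, i.e.\ $(\sigma_1,\sigma_2)(X)=\hat X$. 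Thus membership in the orbit is equivalent to the existence of $\sigma_i\in Sp(\Lambda_i)$ with $\sigma_2\phi\sigma_1^{-1}=\hat\phi$. Finally I transport everything to the fixed model space $W$ via the isometries $\Sigma_i:V_i\to W$: writing $\tg_1=\Sigma_1\sigma_1\Sigma_1^{-1}\in Sp(\Sigma_1\Lambda_1)$ and $\tg_2=\Sigma_2\sigma_2\Sigma_2^{-1}\in Sp(\Sigma_2\Lambda_2)$, the relation $\sigma_2\phi\sigma_1^{-1}=\hat\phi$ becomes $\tg_2(\Sigma_2\phi\Sigma_1^{-1})\tg_1^{-1}=\Sigma_2\hat\phi\Sigma_1^{-1}$, and the existence of such $\tg_1,\tg_2$ is precisely the condition $\Sigma_2\hat\phi\Sigma_1^{-1}\in Sp(\Sigma_2\Lambda_2)(\Sigma_2\phi\Sigma_1^{-1})Sp(\Sigma_1\Lambda_1)$.

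**Main obstacle.** I expect the only real subtlety to be bookkeeping rather than a deep difficulty: one must be careful that the lattice $X$ is genuinely determined by the pair $(\Q X,\{\Lambda_i\})$ — this is exactly the content of the remark after Theorem~\ref{xdecomp}, valid here because $\pi_i(X)$ is non degenerate so no $\rad(X_i)$ complicates the reconstruction — and that the sign twist $\langle,\rangle$ versus $-\langle,\rangle$ on $V_2$ in Theorem~\ref{xdecomp} is handled consistently so that ``isometry $V_1\to V_2$'' has a fixed unambiguous meaning throughout, with $Sp(V_i)$ and $Sp(\Lambda_i)$ acting compatibly on both sides. Once that is pinned down, both a) and b) are essentially the observation that a graph subspace determines and is determined by its defining map, together with conjugation transport to $W$.
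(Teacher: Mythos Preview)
Your proposal is correct and follows essentially the same approach as the paper: the paper's proof is extremely terse (``Since one has $\hat{X}=(\sigma_1,\sigma_2)X$ if and only if $\hat{X}_1=\sigma_1X_1,\ \hat{X}_2=\sigma_2(X_2)$ by Theorem~\ref{xdecomp}, a) follows. Assertion b) then follows from a).''), and what you have written is exactly the natural unpacking of those two sentences---the graph description of $\Q X$ for a), and for b) the reconstruction of $X$ from $\phi$ and the lattices via the remark after Theorem~\ref{xdecomp}, followed by transport to $W$.
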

\begin{proof}
  Since one has $\hat{X}=(\sigma_1,\sigma_2)X$ if and only if
  $\hat{X}_1=\sigma_1X_1, \hat{X}_2=\sigma_2(X_2)$ by Theorem
  \ref{xdecomp}, a) follows. Assertion b) then follows from a).
\end{proof}
Taken together, Lemma \ref{orbit_char} and Proposition
\ref{propo_fullrank} show that the orbits under $Sp(\Lambda_1)\times Sp(\Lambda_2)$ of maximal totally isotropic
submodules $X$ of $\Lambda$ are characterized by the rank 
$\rk(\pi_1(X)/\rad(\pi_1(X))) =\rk(\pi_2(X)/\rad(\pi_2(X)))$, the invariants $d,d'$ of the $\rad(\pi_i(X))$, and
a Hecke double coset associated to paramodular groups of rank $\rk(\pi_i(X)/\rad(\pi_i(X)))/2$ derived from
$Sp(\Lambda_1), Sp(\Lambda_2)$. Translating that into 
matrix language we obtain the desired 
generalisation of Garrett's double coset decomposition.
Unfortunately, this requires a somewhat lengthy notation involving the
boundary components (associated to $\rad(\pi_1(X)),\rad(\pi_2(X))$ by
the results of Section 2) on which the paramodular groups of smaller rank act.
Important
special cases which look much simpler will be discussed in the remark following the proof. 
\begin{theorem}\label{maintheorem}
 Let $\Lambda_1,\Lambda_2$ be $\Z$-lattices of ranks $2m,2n$ on the
 vector spaces $V_1,V_2$ over $\Q$ with nondegenerate 
 alternating bilinear forms of square free levels $N_1,N_2$ and
 determinants $D_1,D_2$, let
 $N=\lcm(N_1,N_2)$ and $D=D_1D_2$, let $V=V_1\perp V_2,\Lambda=\Lambda_1 \perp
 \Lambda_2$. 

Let ${\mathcal B}=(e_1,\ldots,e_m,f_1,\ldots,f_m)$ be an ordered para-symplectic
basis for $\Lambda_1$ with $\langle e_i,f_j\rangle =d_i\delta_{ij},
d_i \mid d_{i+1}$ and  ${\mathcal B}'=(e'_1,\ldots,e'_n,f'_1,\ldots,f'_n)$  an ordered para-symplectic
basis for $\Lambda_2$ with $\langle e'_i,f'_j\rangle =d'_i\delta_{ij},
d'_i \mid d'_{i+1}$, let $v_i=d_i^{-1}f_i,
v_i'={d'_i}^{-1}f'_i$. Identify the elements of $Sp(V)$ with their
matrices with respect to the ordered symplectic basis
$(e_1,\ldots,e_m,e'_1,\ldots,e'_n,$ $v_1,\ldots,v_m$, $v'_1, \ldots,
v'_n)$ of $V$
and the elements of $Sp(V_1)$ with their
matrices with respect to the ordered symplectic basis 
$(e_1,\ldots,e_m,v_1,\ldots,v_m)$ of $V_1$,  the elements of $Sp(V_2)$ with their
matrices with respect to the ordered symplectic basis
$(e'_1,\ldots,e'_n,v'_1,\ldots,v'_n)$ of $V_2$.

Let $d\mid D_1,d'\mid D_2,r\le \min(m,n)$ satisfy $d\mid N_1^{m-r}, d'\mid
N_2^{n-r}, \frac{D_1}{d}\mid N_1^{r}, \frac{D_2}{d'}\mid N_2^{r}$, set
$u_1=m-r, u_2=n-r$.

Let $g_1=g_1({\mathcal B},u_1,d)$ be as in Lemma
\ref{matrixrepresentatives_cusps} with associated matrices
$\gamma_1=\gamma_1({\mathcal B},u_1,d) \in Sp_m(\Q), S_1=S_1({\mathcal
  B},u_1,d)\in SL_m(\Z)$ and define $g_2=g_2({\mathcal B}',
u_2,d')$ and $\gamma_2,S_2$ analogously for $\Lambda_2$, for $u_1=0$
set $S_1=S_1({\mathcal B},0,1)=1_m$, for $u_2=0$ set
$S_2=S_2({\mathcal B}',0,1)=1_n$.

Let $\tilde{v}_i=g_1v_i,
\tilde{f}_i=\tilde{d}_i\tilde{v}_i$, $\tilde{e}_i=g_1e_i$ and
$\tilde{v}'_i=g_2v'_i$, $\tilde{f}'_i=\tilde{d}'_i\tilde{v}'_i,
\tilde{e}'_i=g_2e'_i$ as in Lemma
\ref{matrixrepresentatives_cusps}.

Let
\begin{equation*}
  T=T(d,r)=\begin{pmatrix}
\tilde{d}_1&&\\&\ddots&\\&&\tilde{d}_r  
\end{pmatrix}, T'=T'(d',r)=\begin{pmatrix}
\tilde{d}'_1&&\\&\ddots&\\&&\tilde{d}'_r  
\end{pmatrix}.
\end{equation*}

Let $X^{(0)}= \bigoplus_{i=1}^m\Z e_i \oplus
\bigoplus_{i=1}^n\Z e_i'$ and let $P=\{g \in Sp(\Lambda)\mid g(\Q
X^{(0)})=\Q X^{(0)}\}$.

For a maximal totally
isotropic submodule $X$ of $\Lambda$ let
$\pi_i(X)=X_i$ for $i=1,2$ denote the projections
to $\Lambda_i$.

\medskip
Then for each 
double coset $(Sp(\Lambda_1)\times Sp(\Lambda_2))gP$ with $g \in
Sp(\Lambda)$  the maximal totally isotropic submodule
$X=gX^{(0)}$ of $\Lambda$ satisfies  $d(\rad(X_1))=d, d(\rad(X_2))=d'$ and
$\rk(X_i/\rad(X_i))=2r$ for $i=1,2$ for some triple of values $d,d',r$
as described above, and for each fixed such triple with associated
matrices $S_1,S_2$, 
a set of representatives of those double cosets $(Sp(\Lambda_1)\times
Sp(\Lambda_2))gP$ with $g \in Sp(\Lambda)$ for
which $X=gX^{(0)}$ has $d(\rad(X_1))=d, d(\rad(X_2))=d'$ and $X_i/\rad(X_i)$ has rank
$2r$ for $i=1,2$ is given by the matrices

\begin{equation*}
  \begin{pmatrix}
1_{m+n}&0_{m+n}\\
C&1_{m+n}    
  \end{pmatrix},
\end{equation*}

with
\begin{equation*}
 C=
  \begin{pmatrix}
    {}^tS^{-1}_1&0\\0&{}^tS^{-1}_2
  \end{pmatrix}
  \begin{pmatrix}
    0_{r,r}&0_{r,u_1}&{}^tB(T')&0_{r,u_2}\\
0_{u_1,r}&0_{u_1,u_1}&0_{u_1,r}&0_{u_1,u_2}\\
(T')B&0_{r,u_1}&0_{r,r}&0_{r,u_2}\\
0_{u_2,r}&0_{u_2,u_1}&0_{u_2,r}&0_{u_2,u_2}
  \end{pmatrix}
\begin{pmatrix}
  S^{-1}_1&0\\0&S^{-1}_2
\end{pmatrix},
\end{equation*}
where the matrices $\bigl(
\begin{smallmatrix}
  B&0\\
0&{}^tB^{-1}
\end{smallmatrix}\bigr)$ with $B \in GL_r(\Q)\cap M_r(\Z)$ run 
through a set of
representatives  of the double cosets
\begin{equation*}\Gamma^{(r)}(T')
  h \Gamma^{(r)}(T) \subseteq Sp_r(\Q),
\end{equation*}
for example the set ${\mathcal R_{d,d',r}}$ given in Corollary \ref{blockdiagonal_rep}.

\end{theorem}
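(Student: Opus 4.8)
The plan is to combine the structural description of maximal totally isotropic submodules from Theorem~\ref{xdecomp} with the orbit classification from Corollary~\ref{orbits_submodules} and the explicit Hecke double coset representatives from Corollary~\ref{blockdiagonal_rep}, translating everything into matrix form with respect to the chosen symplectic bases. First I would fix $g\in Sp(\Lambda)$ and set $X=gX^{(0)}$, $X_i=\pi_i(X)$; by Theorem~\ref{xdecomp} the module $X$ is determined by the pair $(X_1,X_2)$ together with the isometry $\phi:X_1/\rad(X_1)\to X_2/\rad(X_2)$, and by Lemma~\ref{cancellationlemma} (applied after the reductions preceding Proposition~\ref{propo_fullrank}) we may split $\Lambda_i=M_i\perp\Lambda_i'$ with $Z_i=\rad(X_i)\subseteq M_i$ and $X=Z\perp X'$, where $X'$ sits inside the full-rank piece $\Lambda_1'\perp\Lambda_2'$ of rank $2r$ on each side. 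By Corollary~\ref{orbits_submodules} the $Sp(\Lambda_1)\times Sp(\Lambda_2)$-orbit of $X$ is pinned down exactly by the invariants $d=d(\rad(X_1))$, $d'=d(\rad(X_2))$, the orbits of $\rad(X_i)$ (which by Corollary~\ref{orbits_isotropic} are determined by $d,d'$ under the square free level hypothesis), and the orbit of $\phi'$; the constraints $d\mid N_1^{m-r}$, $\frac{D_1}{d}\mid N_1^{r}$, etc., are precisely the existence conditions from Corollary~\ref{orbits_isotropic}, which establishes the first claim of the theorem.

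Next I would make the $\rad(X_i)$-parts concrete: apply Lemma~\ref{matrixrepresentatives_cusps} to each $\Lambda_i$ with the data $(u_i,d)$ resp.\ $(u_i,d')$ to obtain $g_1,g_2$ and the matrices $S_1\in SL_m(\Z)$, $S_2\in SL_n(\Z)$ together with the adapted bases $\tilde e_i,\tilde f_i,\tilde v_i$ and $\tilde e_i',\tilde f_i',\tilde v_i'$; the last $u_1$ (resp.\ $u_2$) pairs form a basis of $M(g_1U_0\cap\Lambda_1)$ (resp.\ of $M(g_2U_0\cap\Lambda_2)$), and the complementary $r$ pairs span the non-degenerate piece of rank $2r$ carrying the form with elementary divisors $\tilde d_1,\dots,\tilde d_r$ (resp.\ $\tilde d_1',\dots,\tilde d_r'$), i.e.\ paramodular matrix levels $T,T'$. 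On this rank-$2r$ piece the data reduce, via Proposition~\ref{propo_fullrank}b), to the orbit of the gluing isometry $\phi'$ under $Sp(\Sigma_2\Lambda_2')\times Sp(\Sigma_1\Lambda_1')$, and after identifying both $\Lambda_i'$ with a fixed standard symplectic space this is exactly a double coset in $\Gamma^{(r)}(T')\,h\,\Gamma^{(r)}(T)\subseteq Sp_r(\Q)$, whose block-diagonal representatives $\bigl(\begin{smallmatrix}B&0\\0&{}^tB^{-1}\end{smallmatrix}\bigr)$ with $B\in GL_r(\Q)\cap M_r(\Z)$ are furnished by Corollary~\ref{blockdiagonal_rep}. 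So the set of double cosets in question is in bijection with $\bigsqcup_{(d,d',r)}\mathcal R_{d,d',r}$.

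The remaining work is to translate a given choice of $(d,d',r)$ and $B\in\mathcal R_{d,d',r}$ into the displayed lower-triangular matrix $\bigl(\begin{smallmatrix}1&0\\C&1\end{smallmatrix}\bigr)$. The idea is to write down, with respect to the symplectic basis $(e_\bullet,e_\bullet',v_\bullet,v_\bullet')$, an element $g\in Sp(\Lambda)$ realizing $gX^{(0)}=X$: one lets $g$ act through $g_1,g_2$ on the ``$e/v$'' coordinates to produce the adapted bases, and then glues the rank-$2r$ blocks by $B$, which amounts to replacing the $r$ relevant $\tilde e$-vectors on side~1 by $\tilde e_i + (\text{something involving }B, T')\tilde f'$-vectors on side~2; since $X^{(0)}=\bigoplus\Z e_i\oplus\bigoplus\Z e_i'$ is the ``upper'' Lagrangian, such a $g$ can be taken of the form $\bigl(\begin{smallmatrix}1&0\\C&1\end{smallmatrix}\bigr)$, and computing $C$ in the original basis (undoing the base change by $S_1,S_2$, which accounts for the outer $\bigl(\begin{smallmatrix}{}^tS_1^{-1}&0\\0&{}^tS_2^{-1}\end{smallmatrix}\bigr)\cdots\bigl(\begin{smallmatrix}S_1^{-1}&0\\0&S_2^{-1}\end{smallmatrix}\bigr)$ factors) and inserting the block $(T')B$ with its transpose ${}^tB(T')$ in the two off-diagonal $r\times r$ slots gives exactly the stated $C$; the zero rows and columns correspond to the $u_1,u_2$ radical directions, which are glued trivially. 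I expect the main obstacle to be precisely this last bookkeeping step: verifying that the matrix $\bigl(\begin{smallmatrix}1&0\\C&1\end{smallmatrix}\bigr)$ with this $C$ genuinely lies in $Sp(\Lambda)$ (integrality of $C$ and of ${}^tS_1^{-1}$-type factors after clearing the $\tilde d_i$, which is where Lemma~\ref{transpose_integrality} and the square free level enter) and that distinct $(d,d',r,B)$ with $B$ ranging over $\mathcal R_{d,d',r}$ give distinct $(Sp(\Lambda_1)\times Sp(\Lambda_2))gP$-double cosets while exhausting all of them — the surjectivity and injectivity both following from Proposition~\ref{propo_fullrank} once the dictionary between $C$ and the triple $(\rad(X_1)\text{-orbit},\rad(X_2)\text{-orbit},\phi')$ is made precise, but the explicit index-chasing through the permutation-matrix structure of $S_1,S_2$ is the delicate part.
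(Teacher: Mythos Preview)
Your proposal is correct and follows essentially the same approach as the paper: the same chain of ingredients (Theorem~\ref{xdecomp}, Lemma~\ref{cancellationlemma}, Corollary~\ref{orbits_submodules}, Lemma~\ref{matrixrepresentatives_cusps}, Proposition~\ref{propo_fullrank}, Corollary~\ref{blockdiagonal_rep}) is invoked, and the integrality check via Lemma~\ref{transpose_integrality}/Corollary~\ref{blockdiagonal_rep} is the same. The only cosmetic difference is direction: the paper starts from the candidate matrix $\bigl(\begin{smallmatrix}1&0\\C&1\end{smallmatrix}\bigr)$, computes $\rho(\tilde e_j),\rho(\tilde e_j')$ explicitly to read off $X_1,X_2,\rad(X_i),\phi$, and then appeals to Proposition~\ref{propo_fullrank}, whereas you run the analysis from $X$ down to the invariants and then synthesize the matrix; both amount to establishing the same dictionary.
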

\begin{proof} For $B$ as above the automorphism $\rho=\rho(r,d,d',B)$ of $V$ given by
  the matrix
  \begin{equation*}
                  \begin{pmatrix}
                     S^{-1}&0\\
                     0&{}^tS
                   \end{pmatrix}            
  \begin{pmatrix}
   1_{m+n} &0_{m+n}\\C&1_{m+n}
 \end{pmatrix}
                         \begin{pmatrix}
                           S&0\\
                           0&{}^tS^{-1}
                  
                         \end{pmatrix}
  = \begin{pmatrix}
   1_{m+n} &0_{m+n}\\{}^tSCS&1_{m+n}
 \end{pmatrix}
 \end{equation*}
with 
\begin{equation*}
  S=\begin{pmatrix}
  S_1^{-1}&0\\0&S^{-1}_2
\end{pmatrix},
\quad
 C=
 \begin{pmatrix}
    0_{r,r}&0_{r,u_1}&{}^tBT'&0_{r,u_2}\\
0_{u_1,r}&0_{u_1,u_1}&0_{u_1,r}&0_{u_1,u_2}\\
T'B&0_{r,u_1}&0_{r,r}&0_{r,u_2}\\
0_{u_2,r}&0_{u_2,u_1}&0_{u_2,r}&0_{u_2,u_2}
  \end{pmatrix}
\end{equation*}
with respect to the basis
$(e_1,\dots,e_m,e_1',\dots,e_n',v_1,\dots,v_m,v_1',\dots,v_n')$ of $V$
has matrix
\begin{equation*}
  \begin{pmatrix}
   1_{m+n} &0_{m+n}\\C&1_{m+n}
 \end{pmatrix}
\end{equation*}
with respect to the basis
$(\tilde{e}_1,\dots,\tilde{e}_m,\tilde{e}_1',\dots,\tilde{e}_n',\tilde{v}_1,\dots,\tilde{v}_m,\tilde{v}_1',\dots,\tilde{v}_n')$
of $V$ 
by definition of $S_1,S_2$. We have thus
\begin{eqnarray*}
  \rho(\tilde{e}_j)&=&\tilde{e}_j+\sum_{i=1}^r\tilde{d}_i'b_{ij}\tilde{v}_i'\\
                   &=&\tilde{e}_j+\sum_{i=1}^rb_{ij}\tilde{f}_I',\\
  \rho(\tilde{e}_j')&=&\tilde{e}_j'+\sum_{i=1}^rb_{ji}\tilde{d}_j'\tilde{v}_i\\
  &=&\tilde{e}_j'+\sum_{i=1}^r\tilde{d}_i^{-1}b_{ji}\tilde{d}_j'\tilde{f}_i.
\end{eqnarray*}
We notice that we have $\tilde{d}_i^{-1}b_{ji}\tilde{d}_j'\in \Z$ by
Corollary \ref{blockdiagonal_rep}, hence $ \rho(\tilde{e}_j' )\in
\Lambda_1$ for all $j$ and therefore $g \in Sp(\Lambda)$.
With $X=\rho(X^{(0)})$ we have then 
$X_1=\sum_{j=1}^m\Z\tilde{e}_j+\sum_{j=1}^r\Z(\tilde{d}_j'\sum_{i=1}^rb_{ji}\tilde{v}_i)$
with  
$\rad(X_1)=\sum_{i=r+1}^m\Z \tilde{e}_i$, $d(\rad(X_1))=d$.

In the same way we have
$X_2=\sum_{j=1}^n\Z\tilde{e}_j'+\sum_{j=1}^r\Z(\sum_{i=1}^rb_{ij}\tilde{f}'_i)$ with
$\rad(X_2)
=\sum_{i=r+1}^n\Z\tilde{e}_i'$,
$d(\rad(X_2))=d'$.

We can write 
$M(\rad(X_1))=\rad(X_1)+\sum_{i=r+1}^m\Z\tilde{f}_i,
\Lambda_1'=\sum_{i=1}^r(\Z\tilde{e}_i+\Z\tilde{f}_i)$ and have
$\Lambda_1=M(\rad(X_1))\perp \Lambda_1'$. In the same way we have 
$M(\rad(X_2))=\rad(X_2)+\sum_{i=r+1}^m\Z\tilde{f}'_i,
\Lambda_2'=\sum_{i=1}^r(\Z\tilde{e}'_i+\Z\tilde{f}'_i)$ and
$\Lambda_2=M(\rad(X_2))\perp \Lambda_2'$.

We can identify $X_1/\rad(X_1)$ with %
$\sum_{j=1}^r\Z\tilde{e}_j+\sum_{j=1}^r\Z(\tilde{d}_j'\sum_{i=1}^rb_{ji}\tilde{v}_i)\subseteq
\Lambda_1'$
and $X_2/\rad(X_2)$ with
$\sum_{j=1}^n\Z\tilde{e}_j'+\sum_{j=1}^r\Z(\sum_{i=1}^rb_{ij}\tilde{f}'_i)\subseteq
\Lambda_2'$ and obtain an isomorphism $\phi: X_1/\rad(X_1)\to
X_2/\rad(X_2)$ given by
$\phi(\tilde{e}_j)=\sum_{i=1}^rb_{ij}\tilde{f}_i'$,
$\phi(\tilde{v}_j)=\sum_{i=1}^ra_{ij}(\tilde{d}_i')^{-1}\tilde{e}_i'$ with
$A={}^tB^{-1}$.

We let now $W$ be a symplectic vector space of dimension $2r$ over
$\Q$ with symplectic basis $(x_1,\dots,x_r,y_1,\dots,y_r)$ and define
$\Sigma_1:\Q \Lambda_1'\to W_1,\Sigma_2:\Q \Lambda_2' \to W$ by
$\Sigma_1(\tilde{v}_j)=y_j,
\Sigma_1(\tilde{e}_j)=x_j$,$\Sigma_2(\tilde{e}_j')=d_jy_j,
\Sigma_2(\tilde{f}_j')=x_j$.
The matrix groups attached to $Sp(\Lambda_1'),Sp(\Lambda_2')$ with
respect to the $x_j,y_j$ are then $\Gamma^{(r)}(T), \Gamma^{(r)}(T')$
respectively and we have
\begin{eqnarray*}
  \Sigma_2 \circ \phi\circ
  \Sigma_1^{-1}(x_j)&=&\sum_{i=1}^rb_{ij}x_i,\\
  \Sigma_2 \circ \phi\circ
  \Sigma_1^{-1}(y_j)&=&\sum_{i=1}^ra_{ij}y_i
\end{eqnarray*} with $A={}^tB^{-1}$ as before, i.e., $\Sigma_2 \circ \phi\circ
  \Sigma_1^{-1}$ has matrix $\biggl(
  \begin{smallmatrix}
    B&0_r\\0_r&{}^tB^{-1}
  \end{smallmatrix}\biggr)$.

The previous proposition together with
the lemmata of this section leading up to it implies then the assertion.
\end{proof}
\begin{remark}
  \begin{enumerate}
  \item Assume $m=n, N_1=N_2, D_1=D_2$ and consider those double
    cosets $(Sp(\Lambda_1)\times Sp(\Lambda_2))gP$ with $r=m=n$, i.e.,
    the projections $X_1,X_2$ of $X=gX^{(0)}$ have zero radical. In
    the theorem we
    have then $T=T'=\diag(d_1,\ldots,d_m)$ and $S_1=S_2=1_m$, and the
    set of representatives of these double cosets consists in matrix
    notation of the 
    \begin{equation*}
      \begin{pmatrix}
        1_{m}&0_m&0_{m}&0_m\\
        0_m&1_m&0_m&0_m\\
        0_{m}&{}^tBT&1_{m}&0_m\\
TB&0_m&0_{m}&1_{m}
      \end{pmatrix},
    \end{equation*} where $B$ runs through a set of representatives of
    the double cosets 
    \begin{equation*}\Gamma^{(m)}(T)
  h \Gamma^{(m)}(T) \subseteq Sp_m(\Q).
\end{equation*}

\item    In the case that $\Lambda,\Lambda_1, \Lambda_2$ have level $1$ we
obtain (with $d=d'=D_1=D_2=1$ and the matrices $B$ being diagonal elementary divisor
matrices by known results for the Hecke algebra of $Sp_n(\Z)$) Garrett's
\cite{garrett} result with a coordinate free proof. That such a proof
should be possible has already been 
remarked in \cite{garrett}, see also \cite{murase}. Whereas in
Garrett's proof the relation between the representatives of the double
cosets $Pg(Sp_m(\Z)\times Sp_n(\Z))$ and the representatives of the
Hecke double cosets in $Sp_r(\Z)$ appears to be a coincidence, the
method chosen here shows that it is not. 
\end{enumerate}
\end{remark}

\section{Theta series for the paramodular group}\label{sec5}
We consider an $m$-dimensional vector space $V$ over $\Q$ with positive definite quadratic form
$Q:\: V \longrightarrow \Q$ and associated  symmetric bilinear forms
 \begin{equation*}
b(x,y) = Q(x+y)-Q(x) - Q(y),\, B(x,y) = \frac{1}{2}b(x,y).
 \end{equation*}
The Gram matrix of an $n$-tuple $(x_1,\ldots,x_n) \in V^n$ of vectors in $V$ with respect to
$Q$ is the matrix $Q(x_1,\ldots,x_n) = (B(x_i,x_j))_{1 \leq i,j \leq n}$.

For a lattice $L=\bigoplus_{j=1}^{m} \Z e_j$ of full rank on $V$ with basis $(e_1,\ldots,e_m)$ the
dual lattice is
$L^{\#} = \{y \in V~|~b(y,L) \subseteq \Z\}$, the level $N(L)$ is the smallest $N\in\N$ with 
$NQ(L^{\#}) \subseteq \Z$ and the discriminant ${\rm disc}(L)$ is $\det(b(e_i,e_j))$. 

The lattice is integral if $Q(L) \subseteq \Z$, it is unimodular if $L=L^{\#}$ (this corresponds
to an even unimodular lattice in the notation of \cite{OM,CS}). Lattices $L$ and $K$ on $V$ are in the
same class if there is 
 \begin{equation*}
\varphi \in O(V) = \{\varphi \in {\rm GL}(V)~|~ Q(\varphi(x)) = Q(x) \mbox{ for all }
 x \in V\}
 \end{equation*}
with $\varphi(K) = L$, they are in the same genus ($K \in {\rm gen}(L)$) if for all primes $p$
there is $\varphi_p \in O(V \otimes \Q_p)$ with $\varphi_p(K \otimes \Z_p) = L\otimes \Z_p$.

Similarly, two $n$-tuples $(K_1,\ldots,K_n)$, $(L_1,\ldots,L_n)$ of lattices on $V$ are in the
same class if there exists $\varphi \in O(V)$ with $\varphi(K_i) = L_i$ for 
$1 \leq i \leq n$, similarly for the genus. We write
 \begin{equation*}
 O(K_1,\ldots,K_n) = \{\varphi \in O(V)~|~ \varphi(K_i) = K_i \mbox{ for } 1 \leq i \leq n\}
 \end{equation*}
and $O_{\A}(K_1,\ldots,K_n)$ for its adelization, so that $\varphi \in O_{\A}(K_1,\ldots,K_n)$ is a
tuple $(\varphi_p)_{p\in {\mathbb P}\cup \{\infty\}}$ with $\varphi_p \in O(V \otimes \Q_p)$ and 
$\varphi_p(K_i \otimes \Z_p) = K_i \otimes \Z_p$ for all $p$ including $p = \infty$. With 
this notation the classes in the genus of the $n$-tuple  $(K_1,\ldots,K_n)$ correspond to the double 
cosets 
$ O(V) \varphi O_{\A}(K_1,\ldots,K_n)$ 
in the adelic orthogonal group $O_{\A} (V)$ of the quadratic space $(V,Q)$.
Assuming $K_1\supseteq K_2 \supseteq \cdots \supseteq K_n \supseteq MK_1$ for some $M \in  \N$ 
we see that $O(K_1,\ldots,K_n)$ is a congruence subgroup of $O(K_1)$,
hence in particular of finite index.
Since the number of double cosets $O(V)\varphi O_{\A}(K_1)$, being the
number of classes in the genus of $K_1$, is known to be finite,
the 
number of classes in the genus of $K_1,\ldots,K_n$ is finite too. This assumption can be made
without loss of generality since there exist $M_2,\ldots,M_n \in \N$ with $M_{i+1}K_{i+1} \subseteq M_iK_i
\subseteq K_i$ for all $i$ and $M$ with $MK_1 \subseteq K_n$.

\begin{definition}
  Let $L_1,\dots, L_n$  be lattices on $V$.

  The theta series
   $\vartheta^{(n)}(L_1,\dots,L_n)$ is the function on ${\mathfrak
     H}_n$ given for $Z\in \Hn$ by 
   \begin{equation*}
     \vartheta^{(n)}(L_1,\dots,L_n;Z)=\sum_{x_1 \in L_1,\dots,x_n\in
       L_n}\exp(\pi i \tr(Q(x_1,\dots,x_n)Z)).
   \end{equation*}
  \end{definition}
 \begin{remark}
     \begin{enumerate}
     \item for $L=L_1=\dots=L_n$ we obtain the usual degree (or genus)
       $n$ theta series of the lattice $L$.
\item We obtain a matrix notation for our theta series by fixing a
  basis $(e_1,\ldots,e_m)$ of $V$ and matrices $U_1,\ldots U_n \in
  GL_m(\Q)$ such that the coordinate vectors with respect to the given
  basis of vectors in $L_i$ run through $U_i\Z^m$. If $S$ is the Gram
  matrix of $Q$ with respect to the given basis we obtain
  \begin{equation*}
    \vtn(L_1,\dots,L_n;Z)=\sum_G\exp(\pi i \tr(S[UG]Z)),
  \end{equation*}
where $G=(g_1,\dots,g_n)$ runs over the integral $(m\times
n)$-matrices and where we write $UG=(U_1g_1,\dots,U_ng_n)$. 
     \end{enumerate}
   \end{remark}
   \begin{theorem}
The theta series satisfy the transformation formula
\begin{equation*}
  \vtn(L_1^\#,\dots,L_n^\#,-Z^{-1})=\sqrt{\det(Z/i)}^m\,\prod_{j=1}^n\sqrt{{\rm disc}(L_j)}\,\,\vtn(L_1,\ldots,L_n;Z),
\end{equation*}
where  $\sqrt{\det(Z/i)}$ is defined as in \cite[Hilfssatz
0.10]{freitagbook}, i.e., it is continuous on $\Hn$ and has positive
real values for $Z=iY$ on the imaginary axis. 
   \end{theorem}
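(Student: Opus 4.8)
The plan is to reduce the identity, by analytic continuation, to the case of a purely imaginary argument $Z=iY$ and then to read it off from the Poisson summation formula applied to a Gaussian on $L_1\oplus\dots\oplus L_n$. First I would observe that both sides are holomorphic on $\Hn$: the left-hand side because $Z\mapsto -Z^{-1}$ maps $\Hn$ into itself and $\vtn$ is holomorphic there (the defining series converges locally uniformly, $Q$ being positive definite), and the right-hand side because $\sqrt{\det(Z/i)}$ is holomorphic and nowhere zero on $\Hn$ by \cite[Hilfssatz 0.10]{freitagbook}. By the identity theorem it therefore suffices to prove the equality on the totally real subset $\{\,iY\mid Y={}^tY>0\,\}$, on which $-Z^{-1}=iY^{-1}$ and $\sqrt{\det(Z/i)}=\sqrt{\det Y}$ is the ordinary positive square root, so no branch questions arise.

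Next, using the matrix form of the theta series from the remark above, I would fix a basis of $V$, let $S$ be the Gram matrix occurring there (so that $\disc(L_j)=\det({}^tU_jSU_j)$) and let $U_1,\dots,U_n\in GL_m(\Q)$ be matrices whose columns generate the coordinate lattices of $L_1,\dots,L_n$. Collecting the $n$ column variables into a single vector $w\in\Z^{mn}$ one obtains, for $Z=iY$,
\begin{equation*}
\vtn(L_1,\dots,L_n;iY)=\sum_{w\in\Z^{mn}}\exp\bigl(-\pi\,{}^tw\,A\,w\bigr),\qquad A:={}^t\mathbf{U}\,(Y\otimes S)\,\mathbf{U},
\end{equation*}
where $\mathbf{U}=\diag(U_1,\dots,U_n)$ and $Y\otimes S$ is the $mn\times mn$ matrix with $(i,j)$ block $Y_{ij}S$; this $A$ is positive definite. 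Applying Poisson summation in the form $\sum_{w\in\Z^{N}}e^{-\pi\,{}^twAw}=(\det A)^{-1/2}\sum_{v\in\Z^{N}}e^{-\pi\,{}^tvA^{-1}v}$ reduces the theorem to two identifications. For the prefactor, $\det A=(\det\mathbf{U})^{2}(\det Y)^{m}(\det S)^{n}=(\det Y)^{m}\prod_{j=1}^{n}\det({}^tU_jSU_j)=(\det Y)^{m}\prod_{j=1}^{n}\disc(L_j)$, so that $(\det A)^{-1/2}$ is precisely the reciprocal of $\sqrt{\det Y}^{\,m}\prod_{j}\sqrt{\disc(L_j)}$. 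For the transformed Gaussian, $A^{-1}=\mathbf{U}^{-1}(Y^{-1}\otimes S^{-1})({}^t\mathbf{U})^{-1}$, and since the coordinate lattice of $L_j^{\#}$ is $S^{-1}({}^tU_j)^{-1}\Z^{m}$ with Gram matrix $U_j^{-1}S^{-1}({}^tU_j)^{-1}$, substituting these turns $\sum_{v\in\Z^{mn}}e^{-\pi\,{}^tvA^{-1}v}$ into $\vtn(L_1^{\#},\dots,L_n^{\#};iY^{-1})$. Combining the two gives the formula for $Z=iY$, and the identity theorem extends it to all of $\Hn$.

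The routine but error-prone part is the bookkeeping in those two identifications: one must keep track of the exact normalization of the Gram matrix $S$ (the factor $2$ relating $b$ and $B$), and correspondingly of the precise scaling and index of the dual lattice $L_j^{\#}$, so that all powers of $2$ and of $\det Y$ cancel exactly as claimed. The tensor-product structure $Y\otimes S$ is what makes the second identification work, since inverting the quadratic form simultaneously replaces $Y$ by $Y^{-1}$ (that is, $iY$ by $-Z^{-1}$) and each $L_j$ by $L_j^{\#}$. One could also dispense with the reduction to the imaginary axis and apply Poisson summation directly for complex $Z$ with positive definite imaginary part; the Gaussian summation then produces the factor $\bigl(\det(Z/i)^{m}\prod_j\disc(L_j)\bigr)^{-1/2}$, and \cite[Hilfssatz 0.10]{freitagbook} is exactly what pins down its correct branch as $\sqrt{\det(Z/i)}^{\,-m}\prod_j\sqrt{\disc(L_j)}^{\,-1}$, but the preliminary reduction to $Z=iY$ avoids this.
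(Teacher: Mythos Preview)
Your argument is correct and rests on the same idea as the paper's proof, namely Poisson summation for the Gaussian defining $\vtn$. The paper follows Freitag's Hilfssatz~0.12 directly, computing the Fourier coefficients of the periodic function $W\mapsto\sum_G\exp(\pi i\tr(S[UG+UW]Z))$ for complex $Z$ (precisely the alternative you sketch in your final paragraph), while you instead restrict to $Z=iY$, vectorize via the Kronecker product $Y\otimes S$, and then analytically continue; these are implementation differences rather than different proofs.
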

   \begin{proof}
     By using the matrix formulation of our theta series we can
     proceed as in the proof of \cite[Hilfssatz 0.12]{freitagbook} by
     computing the Fourier coefficient $a(H)$ at a matrix $H=(h_1,\dots,h_n)\in \Z^{m,n}$ of
     the periodic function $f$ on $\C^{m,n}$ given by
     \begin{equation*}
       f(W)=\sum_G\exp(\pi i \tr(S[UG+UW]Z)).
     \end{equation*}
We arrive then in the same way as there at
\begin{equation*}
  a(H)=\exp(-\pi i \tr(S^{-1}[{}^t\,U^{-1}H]Z^{-1})) \int\exp(\pi i
  \tr(S[UV]Z)) dV
\end{equation*}
where $V$ runs over $\R^{m,n}$ and where we write
${}^t\,U^{-1}H=({}^t\,U_1^{-1}h_1,\dots,{}^t\,U_n^{-1}h_n)$.
Applying the transformation formula for integrals and using that the
coordinate vectors with respect to the given basis of the dual lattice
$L_i^\#$ run over $U_i^{-1}\Z^m$ we obtain the final result as in \cite{freitagbook}. 
   \end{proof}
   \begin{theorem}\label{theta_transformation}
 Let $T\in M_n(\Z)$ be an elementary divisor matrix with diagonal
 entries $1=t_1,t_2,\ldots,t_n$, let $L_j$ for $1\le j \le n$ be
 positive definite even
 $t_j$-modular lattices (so $L_j^\#=t_j^{-1}L_j$ and $Q(L_j)=t_j \Z$)
 of rank $m$ with $L_1\supseteq L_2\supseteq
 \dots \supseteq L_n$ (we will call such a chain of lattices 
 paramodular of level $T$ in the sequel).

Then $\vtn(L_1,\ldots,L_n)$ is a modular form of weight $k=m/2$ for
the paramodular group $\Gamma^{(n)}(T)$.
   \end{theorem}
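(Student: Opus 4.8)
The plan is the standard three-step verification that a theta series is a modular form: holomorphy, reduction to generators via Koecher's principle, and the transformation law on a generating set. Holomorphy is immediate: every Gram matrix $Q(x_1,\dots,x_n)$ is positive semidefinite, so for $Z=X+iY\in\mathfrak{H}_n$ one has $\lvert\exp(\pi i\,\tr(Q(x_1,\dots,x_n)Z))\rvert=\exp(-\pi\,\tr(Q(x_1,\dots,x_n)Y))$, and since for fixed $Y\succ 0$ only finitely many tuples $(x_1,\dots,x_n)\in L_1\times\cdots\times L_n$ satisfy $\tr(Q(x_1,\dots,x_n)Y)\le C$, the series converges absolutely and locally uniformly, as for classical Siegel theta series. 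Because $n\ge 2$, Koecher's principle (valid here by the remark after the definition of modular forms) reduces the claim to checking $\vtn(L_1,\dots,L_n)\mid_k\gamma=\vtn(L_1,\dots,L_n)$ for $\gamma$ in a generating set of $\Gamma^{(n)}(T)$.

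I would take as generators the Siegel-parabolic subgroup of $\Gamma^{(n)}(T)$ together with the Fricke-type involutions $\eta_{T,I}=\bigl(\begin{smallmatrix}0&-T_I^{-1}\\ T_I&0\end{smallmatrix}\bigr)$ inverting the coordinates in a subset $I\subseteq\{1,\dots,n\}$ compatible with $t_1\mid\cdots\mid t_n$ (for $I=\{1,\dots,n\}$ this is $\eta_T=\bigl(\begin{smallmatrix}0&-T^{-1}\\ T&0\end{smallmatrix}\bigr)\in\Gamma^{(n)}(T)$); that these generate is a standard, Bruhat-type fact. A Siegel-parabolic element is a product of a translation $\bigl(\begin{smallmatrix}1_n&B\\0&1_n\end{smallmatrix}\bigr)$ with $B={}^tB$, $BT\in M_n(\Z)$, and a block matrix $\bigl(\begin{smallmatrix}{}^tU^{-1}&0\\0&U\end{smallmatrix}\bigr)$ with $U\in GL_n(\Z)$, $T^{-1}UT\in M_n(\Z)$. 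For the translation, $\tr(Q(x_1,\dots,x_n)B)=\sum_i Q(x_i)b_{ii}+\sum_{i<j}b(x_i,x_j)b_{ij}\in 2\Z$ for all tuples: each $L_i$ being even $t_i$-modular gives $Q(x_i)\in t_i\Z$ with $b_{ii}\in t_i^{-1}\Z$, and the chain $L_1\supseteq\cdots\supseteq L_n$ with $t_j$-modularity gives $b(x_i,x_j)\in t_{\min(i,j)}\Z$ with $b_{ij}\in t_{\min(i,j)}^{-1}\Z$. For the block matrix, $Z\mapsto{}^tU^{-1}ZU^{-1}$, and the membership condition $T^{-1}UT\in M_n(\Z)$ (equivalently for $U^{-1}$) is precisely what makes $(x_1,\dots,x_n)\mapsto U^{-1}(x_1,\dots,x_n)$ a bijection of $L_1\times\cdots\times L_n$ onto itself; then $\tr(Q(x_1,\dots,x_n)\,{}^tU^{-1}ZU^{-1})=\tr(Q(U^{-1}(x_1,\dots,x_n))Z)$ gives invariance, the automorphy factor $\det(U)^{-k}$ being $1$ since $8\mid m$ (as $L_1$ is even unimodular), so $k=m/2$ is divisible by $4$.

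The Fricke step is the heart of the matter and is where the theta transformation formula just proved is used. For $\eta_T$ one has $\eta_T\cdot Z=-T^{-1}Z^{-1}T^{-1}$, $\det(CZ+D)=\det(TZ)$; rescaling the summation variables via $L_j=t_jL_j^\#$ gives $\vtn(L_1,\dots,L_n;-T^{-1}Z^{-1}T^{-1})=\vtn(L_1^\#,\dots,L_n^\#;-Z^{-1})$, which by the transformation formula equals $\sqrt{\det(Z/i)}^m\prod_j\sqrt{\disc(L_j)}\;\vtn(L_1,\dots,L_n;Z)$. A $t_j$-modular lattice of rank $m$ has $\disc(L_j)=t_j^m$, so $\prod_j\sqrt{\disc(L_j)}=\det(T)^{m/2}$; comparing with the target automorphy factor $\det(TZ)^{m/2}=\det(T)^{m/2}\det(Z)^{m/2}$ and with $\det(Z/i)^{m/2}=(-i)^{nm/2}\det(Z)^{m/2}$, and using $8\mid m$ to kill the phase $(-i)^{nm/2}$, yields $\vtn(\dots;\eta_T\cdot Z)=\det(CZ+D)^{k}\vtn(\dots;Z)$, i.e.\ $\vtn(\dots)\mid_k\eta_T=\vtn(\dots)$. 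The partial involutions $\eta_{T,I}$ are treated identically, using the version of the transformation formula obtained by applying Poisson summation only in the variables indexed by $I$, which replaces $L_j$ by $L_j^\#=t_j^{-1}L_j$ for $j\in I$ and leaves the other factors and $Z$-entries untouched. I expect the most delicate point to be fixing a correct and convenient generating set of $\Gamma^{(n)}(T)$ — equivalently, the precise list of Fricke-type elements required — together with the $2$-divisibility bookkeeping in the translation step; the rest is routine.
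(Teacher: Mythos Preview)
Your overall strategy coincides with the paper's: establish holomorphy, reduce to a generating set, and verify the transformation law on generators, with the theta inversion formula carrying the main weight. The difference is in the choice of generators, and this is where your proposal has a real gap.

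The paper does not use the Siegel parabolic together with partial Fricke involutions. Instead it cites a precise result (Satz~1.12 of Kappler's dissertation) stating that $\Gamma^{(n)}(T)$ is generated by
\[
J_T=\begin{pmatrix}0_n&-T^{-1}\\ T&0_n\end{pmatrix},\qquad
\begin{pmatrix}1_n&t_i^{-1}E_{ii}\\ 0_n&1_n\end{pmatrix},\qquad
\begin{pmatrix}1_n&t_i^{-1}(E_{ij}+E_{ji})\\ 0_n&1_n\end{pmatrix}\ (i<j).
\]
With this set in hand the proof is very short: invariance under the elementary translations follows from $t_i\mid Q(x_i)$ and $t_i\mid B(x_i,x_j)$ for $i\le j$ (exactly your argument), and the $J_T$ step is precisely your ``Fricke step'' computation for the full index set, using the inversion formula from the preceding theorem together with $L_j^\#=t_j^{-1}L_j$ and $\disc(L_j)=t_j^m$.

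Your delicate point is therefore genuinely the gap. You assert that the Siegel parabolic plus the $\eta_{T,I}$ generate $\Gamma^{(n)}(T)$ as a ``standard, Bruhat-type fact'', but you give no proof or reference, and it is not standard; moreover, treating the partial involutions would require a partial Poisson summation formula not established in the paper. Kappler's generating set eliminates both the block-diagonal check and the partial involutions entirely. Your observation that $8\mid m$ (since $L_1$ is even unimodular) is useful and is what makes the phase in the $J_T$ step disappear; the paper leaves this implicit.
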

   \begin{proof}
 By Satz 1.12 of \cite{kapplerdiss} the group $\Gamma^{(n)}(T)$ is
 generated by the matrices 
 \begin{equation}\label{J_T}
 J_T=
 \begin{pmatrix}
   0_n&-T^{-1}\\
T&0_n
 \end{pmatrix},\quad
 \begin{pmatrix}
   1_n&t_i^{-1}E_{ii}\\
0_n& 1_n
 \end{pmatrix},\quad
 \begin{pmatrix}
   1_n&t_i^{-1}(E_{ij}+E_{ji})\\
0_n&1_n
 \end{pmatrix} (1\le i<j\le n),
 \end{equation}
where $E_{ij}$ denotes the $n\times n$-matrix with entry $1$ in
position $(i,j)$ and $0$ in all other positions.

We check the transformation behavior under these generating matrices:

By the previous theorem we have
\begin{align*}
  \vtn(L_1,&\ldots,L_n;J_TZ)=\vtn(L_1,\ldots,L_n;-(TZT)^{-1})\\
&=\sqrt{\det(Z/i)}^m(\det(T))^m\prod_{j=1}^n\sqrt{{\rm
    disc}(L_j^\#)}\vtn(L_1^\#,\ldots,L_n^\#;TZT) \\
                      &=\sqrt{\det(TZ/i)}^m\vtn(t_1L_1^\#,\ldots,t_nL_n^\#; Z)\\
           &=\sqrt{\det(TZ/i)}^m\vtn(L_1,\ldots,L_n;Z)\\
             &={\det(TZ)}^{\frac{m}{2}}\vtn(L_1,\ldots,L_n;Z)
\end{align*}
since the rank $m$ of a positive definite even unimodular lattice is divisible by $8$.
Moreover, the translation matrices  
$\begin{pmatrix}
   1_n&t_i^{-1}E_{ii}\\
0_n& 1_n
 \end{pmatrix},\quad
 \begin{pmatrix}
   1_n&t_i^{-1}(E_{ij}+E_{ji})\\
0_n&1_n
 \end{pmatrix}$ leave $\vtn(L_1,\ldots,L_n)$ invariant since by
 assumption we have $t_i\mid Q(x_i)$ and $t_i \mid B(x_i,x_j)$ for
 $1\le i<j\le n$ and 
 $x_i\in L_i$. 
\end{proof}
\begin{remark}
  \begin{enumerate}
    \item As noticed in Remark \ref{remark_para} it is no restriction
      of generality to assume $t_1=1$ for a diagonal
      $T=\diag(t_1,\ldots,t_n)$, we will hence do so in the sequel.
  \item If we permute the lattices $L_i$ by some $\sigma \in S_n$ and
    apply the same permutation to the diagonal entries of the matrix
    $T$ we
    obtain a paramodular form with respect to this permuted matrix,
    hence with respect to a conjugate paramodular group. We
    will use this obvious fact later.
 \item If we relax the conditions on the $L_i$ by demanding
   $t_j$-modularity only for the $p$-adic completions of the $L_j$ for
   all $p$ deviding $t_n$ we  obtain modular forms for
groups $\Gamma_0^{(n)}(T,N)$ (as mentioned in Section \ref{sec2})
which are sort of a mix between paramodular groups and groups 
of type $\Gamma_0^{(n)}(N)$. The details of this will not be worked out here.  
 \end{enumerate}
\end{remark}
 \section{Siegel's main theorem}\label{sec6}
\begin{definition}
   Let $L_1\supseteq L_2\supseteq\dots\supseteq L_n$ be lattices on $V$. 
   \begin{enumerate}
\item The weight $w(\gen(L_1,\dots,L_n))$ of the genus of
  $(L_1,\ldots,L_n)$ is given as 
  \begin{equation*}
    \sum_{(M_1,\ldots, M_n)}\frac{1}{\vert O(M_1,\ldots,M_n)\vert},
  \end{equation*}
where the summation is over a set of representatives of the classes in
the genus of $(L_1,\ldots,L_n)$.
\item The genus theta series $\vartheta^{(n)}(\gen(L_1,\ldots,L_n))$ is
  given as
  \begin{equation*}
     \vartheta^{(n)}(\gen(L_1,\ldots,L_n))=\frac{1}{w(\gen(L_1,\ldots,L_n))}
     \sum_{(M_1,\ldots, M_n)}\frac{\vartheta^{(n)}(M_1,\ldots,M_n)}{\vert
       O(M_1,\ldots,M_n)\vert}, 
  \end{equation*}
where the summation is over a set of representatives of the classes in
the genus of $(L_1,\ldots,L_n)$.
   \end{enumerate}
 \end{definition}
 \begin{theorem}
    Let $k \equiv 0 \bmod 4$, and let $T$ be an elementary divisor
    matrix of square free level.
    Then
    there is exactly one genus of $n$-tuples $(L_1,\ldots,L_n)$ of
    lattices of rank $m=2k$ which are paramodular of level $T$.  
 \end{theorem}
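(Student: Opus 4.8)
The plan is to reduce the statement to a local question via the standard description of a genus through its completions, and then to classify the local lattice chains prime by prime; the substance will lie at the primes dividing $\det(T)$ and, as always, at $p=2$.

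First, recall (e.g.\ \cite{OM,CS}) that the genus of an $n$-tuple $(L_1,\dots,L_n)$ of lattices on a rational quadratic space $V$ is pinned down by the isometry classes of the completed tuples $(L_{1,p},\dots,L_{n,p})$ over $\Z_p$ for all finite $p$ together with the class of $V\otimes\R$, and that conversely any coherent family of such local tuples --- equal to a fixed reference for almost all $p$ --- is realized by a genus of global tuples on $V$. In our situation $Q$ is positive definite, so the Archimedean datum is fixed; and $L_1$ is even unimodular of rank $m=2k$ with $8\mid m$ (as $4\mid k$), so $V=\Q\otimes L_1\cong\Q\otimes E_8^{\oplus m/8}$ by Hasse--Minkowski and $L_1$ lies in the unique genus of positive definite even unimodular lattices of rank $m$; in particular $L_{1,p}$ is, for every $p$, the hyperbolic even unimodular $\Z_p$-lattice $\mathbb{H}^{m/2}$. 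I would fix $V$ and such an $L_1$ once and for all.

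Next, since $T$ is square free, for a prime $p$ write $a(p)$ for the number of indices $j$ with $p\nmid t_j$; as $t_j\mid t_{j+1}$ these are the first $a(p)$ indices and $v_p(t_j)=1$ for $j>a(p)$. From $L_1=L_1^{\#}\supseteq L_j^{\#}=t_j^{-1}L_j$ one gets $t_jL_1\subseteq L_j\subseteq L_1$, hence $L_{j,p}=L_{1,p}=:\Lambda_0$ for $j\le a(p)$ and $L_{j,p}=L_{a(p)+1,p}=:\Lambda_1$ for $j>a(p)$. So at $p\nmid\det(T)$ the completed tuple is the constant even unimodular $\Z_p$-lattice $\Lambda_0$, and at $p\mid\det(T)$ it is a pair $\Lambda_0\supseteq\Lambda_1$ with $\Lambda_0$ even unimodular and $\Lambda_1$ even $p$-modular, i.e.\ $(\Lambda_1,p^{-1}Q)$ even unimodular. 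A discriminant count then gives $[\Lambda_0:\Lambda_1]=p^{m/2}$ and $p\Lambda_0\subseteq\Lambda_1$, so $W:=\Lambda_1/p\Lambda_0$ is an $(m/2)$-dimensional subspace of the non-degenerate $\F_p$-space $\overline{\Lambda_0}:=\Lambda_0/p\Lambda_0$, and one checks that $\Lambda_1$ is even $p$-modular exactly when $W$ is a maximal totally singular subspace of $(\overline{\Lambda_0},\bar Q)$ (for odd $p$ this is just $W=W^{\perp}$; for $p=2$ one must use $\bar Q$ itself, its polar form being alternating and uninformative). Thus the local tuple at $p$ is governed by $\Lambda_0$ together with an $O(\Lambda_0)$-orbit of maximal totally singular subspaces of $\overline{\Lambda_0}$, and it suffices to show such orbits exist and are unique.

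For existence, $\overline{\Lambda_0}$ has discriminant the class of $\disc(L_1)=1$, and since $8\mid m$ makes $(-1)^{m/2}$ a square, $\overline{\Lambda_0}$ is the split quadratic space over $\F_p$ (at $p=2$ this is the reduction of $L_{1,2}\cong\mathbb{H}^{m/2}$ over $\Z_2$, of Arf invariant $0$), which carries maximal totally singular subspaces; choosing one for each $p\mid\det(T)$ and setting $L_{j,p}=\Lambda_1$ when $p\mid t_j$ and $L_{j,p}=L_{1,p}$ otherwise glues to a paramodular chain of level $T$ in $V$. For uniqueness, $O(\overline{\Lambda_0})$ acts transitively on maximal totally singular subspaces by Witt's extension theorem, and $O(\Lambda_0)\twoheadrightarrow O(\overline{\Lambda_0})$ because the orthogonal group scheme of an even unimodular $\Z_p$-lattice is smooth; hence the admissible $\Lambda_1$ form a single $O(\Lambda_0)$-orbit, so any two paramodular chains of level $T$ have isometric completions at every prime and lie in one genus. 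The main obstacle is, as usual, the prime $p=2$: one must carry the quadratic form $\bar Q$ (not its polar form) through the whole argument, deduce the splitness of $(\overline{\Lambda_0},\bar Q)$ from the fact that the $2$-adic completion of a positive definite even unimodular lattice of rank divisible by $8$ is hyperbolic, and invoke the characteristic-$2$ versions of Witt's theorem and of the smoothness of the orthogonal group scheme of an even lattice; the remaining bookkeeping, matching global lattice chains in $V$ with coherent families of local chains, is routine.
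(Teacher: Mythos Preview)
Your argument is correct and follows the same global-to-local reduction as the paper: both observe that the rational quadratic space and the unimodular lattice $L_1$ are unique up to genus, and that at each prime $p\mid\det(T)$ the square-free hypothesis collapses the chain to a single pair $\Lambda_0\supseteq\Lambda_1$ with $\Lambda_0$ even unimodular and $\Lambda_1$ even $p$-modular, so uniqueness of the genus reduces to transitivity of $O(\Lambda_0)$ on such $\Lambda_1$.

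Where you diverge is in this local transitivity step. The paper argues directly with lattices: it quotes Eichler's result that a $p$-modular sublattice of a hyperbolic $\Z_p$-lattice admits an adapted hyperbolic basis $e_1,\dots,e_k,f_1,\dots,f_k$ of $\Lambda_0$ with $\Lambda_1=\bigoplus\Z_pe_i\oplus\bigoplus p\Z_pf_i$, from which the required isometry is written down by hand. You instead pass to the residue field, identify the $p$-modular sublattices with maximal totally singular subspaces of $\overline{\Lambda_0}$, invoke Witt's theorem over $\F_p$ for transitivity, and lift via the surjection $O(\Lambda_0)\twoheadrightarrow O(\overline{\Lambda_0})$ coming from smoothness of the even orthogonal group scheme. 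Both are standard; the paper's route is more elementary and self-contained (no group-scheme input, and the $p=2$ case needs no separate discussion once Eichler's basis theorem is granted), while yours is cleaner conceptually and makes the role of the ``even'' hypothesis at $p=2$ more visible. Your careful handling of $\bar Q$ versus its polar form at $p=2$ is exactly the point where a careless argument would break, and you treat it correctly.
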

 \begin{proof}
 Let $L_1$ be an even unimodular lattice of rank $m$. It is well known
 and can be seen using the corollaries on p. 116 and 119 of \cite{cassels}
 that the $p$-adic completion $(L_1)_p$ of $L_1$ is an orthogonal  sum of
 hyperbolic planes for any prime $p$, and it is then obvious that it
 contains a sublattice (of index $p^k$) that is $p$-modular. We can
 therefore obtain a chain of sublattices $L_i\supseteq L_{i+1}$ of
 $L_1$ for which each $L_i$ is $t_i$-modular, which settles the
 existence claim.

On the other hand, given two such chains of lattices $L_i, K_i$, we
can assume the lattices to be on the same rational quadratic space
since there is only one genus of even unimodular lattices of given
rank. If $p$ is a prime dividing $t_n$, the completion $(L_n)_p$ is a
$p$-modular sublattice of the maximal lattice $(L_1)_p$, which is an
orthogonal sum of hyperbolic planes. By \cite[Satz 9.5]{eichlerbook} there is
a hyperbolic basis $e_1,\ldots,e_k,f_1,\ldots,f_k$ (i.e., the
$e_i,f_i$ are isotropic vectors with $B(e_i,f_j)=\delta_{ij}$), such
that $e_1,\ldots,e_k,pf_1,\ldots,pf_k$ is a basis of $(L_n)_p$. In the
same way we obtain analogous bases $e_1',\ldots,e_k',f'_1,\ldots,f_k'$
of $K_1$ and $e_1',\ldots,e_k',pf'_1,\ldots,pf_k'$ of $K_n$. Sending
$e_i$ to $e_i'$ and $f_i$ to $f_i'$ we obtain a local isometry at $p$ mapping
$L_i$ onto $K_i$ for $1\le i \le n$, so the two chains are in the same genus.

\end{proof}
\begin{remark}
 An arithmetic study
   of the classes in this genus will be quite interesting. For
   example, for a prime $p$ and $T=\biggl(
   \begin{smallmatrix}
     1_a&0\\0&p1_b
   \end{smallmatrix}\biggr)$ the lattice chains are of the type
   $(L,\dots,L,K,\dots,K)$ with $a$ copies of the even unimodular
   lattice $L$ and $b$ copies of the even $p$-modular lattice $K$.
   
   Two such chains  $(L,\dots,L,K,\dots,K)$ and  $(L',\dots,L',K',\dots,K')$
   belong to the same class if $L,L'$ are in the same class and (with
   $L=L'$) the sublattice $K'$ of $L$
   is in the orbit of $K$ under the action of the group of
   automorphisms of $L$, so the number of classes in this genus of 
   chains with first entry in the class of $L$ is the number of these orbits.
\end{remark}
\begin{lemma}
For $T$ of square free level and $k>n+1$ the space of Siegel
Eisenstein series of weight $k$ for $\Gamma^{(n)}(T)$ has dimension $1$.  
\end{lemma}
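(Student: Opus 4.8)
The plan is to identify Siegel Eisenstein series for $\Gamma^{(n)}(T)$ with sections of a degenerate principal series and to count the relevant cusps. First I would set up the Siegel Eisenstein series: for the Siegel maximal parabolic $P_0 = P_0(n) \subseteq Sp_n(\Q)$ (the stabilizer of the maximal totally isotropic subspace $\sum_i \Q e_i$ in the standard symplectic realization), a Siegel Eisenstein series of weight $k$ for $\Gamma^{(n)}(T)$ is a series $\sum_{\gamma \in (\Gamma^{(n)}(T)\cap P_0)\backslash \Gamma^{(n)}(T)} \det(C_\gamma Z + D_\gamma)^{-k}$ attached to a choice of ``boundary data'', and the space of such series is spanned by one series for each $\Gamma^{(n)}(T)$-orbit of zero-dimensional cusps, i.e. for each double coset $\Gamma^{(n)}(T) g P_0$ with $g \in Sp_n(\Q)$. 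The condition $k > n+1$ guarantees absolute convergence of each such series (the usual Siegel bound, valid here since $\Gamma^{(n)}(T)$ is commensurable with $Sp_n(\Z)$ and the convergence argument in \cite[chapt.2]{andrianov} applies verbatim), so each series is a well-defined holomorphic modular form, and the weight condition also ensures these series are linearly independent (distinct leading Fourier–Jacobi behaviour at distinct cusps), so that the dimension of the space of Siegel Eisenstein series equals the number of zero-dimensional cusps.

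The key input is then Corollary \ref{cusps_cosets_isotropic}b): the number of zero-dimensional cusps, i.e. boundary components of type $\mathfrak{H}_{m-u}$ with $u = m$ (so $m - u = 0$), equals
\begin{equation*}
\prod_{p \mid N}\bigl(\min(m, m-m, \ell_p, m-\ell_p) + 1\bigr) = \prod_{p \mid N}\bigl(\min(0, \ldots) + 1\bigr) = \prod_{p \mid N} 1 = 1,
\end{equation*}
since $\min(u, m-u, \ell_p, m-\ell_p) = 0$ whenever $u = m$ (the term $m - u = 0$ forces the minimum to be $0$). Equivalently, by Corollary \ref{orbits_isotropic}, there is only one $Sp(\Lambda)$-orbit of maximal totally isotropic submodules of $\Lambda$ (since a maximal one has rank $m$, and $d \mid D$ with $d \mid N^m$, $D/d \mid N^0 = 1$ forces $d = D$), which is the statement already recorded in Corollary \ref{orbits_submodules}. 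So there is a single zero-dimensional cusp, hence a single Siegel Eisenstein series up to scalar, and the dimension is $1$.

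The main obstacle is making precise the claim that ``the space of Siegel Eisenstein series has dimension equal to the number of zero-dimensional cusps'' — i.e. that the Eisenstein series attached to the (unique) cusp is nonzero and that, more generally in the definition one adopts, there are no further degrees of freedom. For a single cusp this reduces to: the Eisenstein series converges (handled by $k > n+1$) and is not identically zero, which one sees by inspecting its constant term along the cusp — the summand $\gamma = 1$ contributes $1$ to the relevant Fourier–Jacobi coefficient and no cancellation occurs in the range of absolute convergence, exactly as in the classical $Sp_n(\Z)$ case treated in \cite[chapt.2]{andrianov}. Since everything is either a direct citation of the convergence/non-vanishing theory that the excerpt has declared valid for paramodular groups or a direct appeal to the cusp count of Section \ref{sec2}, no substantial new computation is needed; the one point requiring care is simply to fix the definition of ``Siegel Eisenstein series'' so that the correspondence with cusps is a clean bijection.
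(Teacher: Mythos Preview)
Your approach is essentially the same as the paper's: the paper's proof is a single sentence citing Corollary~\ref{cusps_cosets_isotropic} to conclude that there is only one equivalence class of zero-dimensional cusps, and hence the space of Siegel Eisenstein series is one-dimensional. You have supplied the details (convergence from $k>n+1$, non-vanishing from the constant term, explicit evaluation of the cusp-counting formula at $u=m$) that the paper leaves implicit, but the argument is the same.
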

\begin{proof}
  We recall that Siegel Eisenstein series, i.e. Eisenstein series associated
  to zero-dimensional cusps, of weight $k$ define a space of dimension equal to
  the number of equivalence classes of zero-dimensional cusps.
  The claim follows from  Corollary \ref{cusps_cosets_isotropic}.   
\end{proof}

 \begin{theorem}[Siegel's main theorem for the paramodular group]
  Let $T$ be an elementary divisor matrix of square free level and let
  $\gen((L_1,\ldots,L_n))$ be the unique  genus of $n$-tuples of
  positive definite 
    lattices of rank $m=2k$ (with $8\mid m$) which are paramodular of
    level $T$. Assume $k>n+1$ and define by
    \begin{equation*}
E^{(n),T}_k(Z):=\sum_{g \in P\backslash \Gamma^{(n)}(T)}j(g,Z)^{-k}
\end{equation*}   
the unique normalized (Siegel) Eisenstein series which is paramodular of
    level $T$ (where $j(
    \bigl(\begin{smallmatrix}
      A&B\\C&D
    \end{smallmatrix}\bigr),Z)=\det(CZ+D)$ as usual and where
    $P$ as in Theorem \ref{maintheorem} is the subgroup of 
    matrices with upper triangular block decomposition of the
    paramodular group $\Gamma^{(n)}(T)$). 

Then one has 
\begin{equation}
   \vartheta(\gen(L_1,\ldots,L_n))=E^{(n),T}_k.
\end{equation}
 \end{theorem}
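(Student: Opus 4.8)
The plan is to show first that $\vtn(\gen(L_1,\dots,L_n))$ is a holomorphic paramodular form of weight $k=m/2$ for $\Gamma^{(n)}(T)$, and then to identify it with $E^{(n),T}_k$ via the Siegel--Weil formula. For the first point, note that evenness, $t_j$-modularity and the chain condition $L_1\supseteq\dots\supseteq L_n$ are all conditions on the completions $(L_j)_p$, hence genus invariants; so every class $(M_1,\dots,M_n)$ in $\gen(L_1,\dots,L_n)$ is again a paramodular chain of level $T$, and by the theorem of Section~\ref{sec5} each $\vtn(M_1,\dots,M_n)$ is a modular form of weight $k$ for $\Gamma^{(n)}(T)$, so the convex combination $\vtn(\gen(L_1,\dots,L_n))$ is one as well. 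Since $k\equiv 0\bmod 4$ and $k>n+1$ (which is needed for $E^{(n),T}_k$ to be given by a convergent series and is assumed throughout), the preceding lemma shows that the space of weight-$k$ Siegel Eisenstein series for $\Gamma^{(n)}(T)$ is one-dimensional. The iterated Siegel operator $\Phi^n$, passage to the unique zero-dimensional cusp of $\Gamma^{(n)}(T)$ (Corollary~\ref{cusps_cosets_isotropic}), sends $E^{(n),T}_k$ to $1$ (only the coset of the identity survives in its defining sum) and likewise sends each $\vtn(M_1,\dots,M_n)$ to $\vartheta^{(0)}=1$; since the genus weights sum to $1$, also $\Phi^n\vtn(\gen(L_1,\dots,L_n))=1$. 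Hence it suffices to prove that $\vtn(\gen(L_1,\dots,L_n))$ lies in the one-dimensional space of Siegel Eisenstein series; equality with $E^{(n),T}_k$ then follows from these $\Phi^n$-values.

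For this remaining step I would pass to the adeles. Because $V$ is positive definite, $O(V)(\R)$ is compact and $O(V)$ has finite class number, so the theta integral $\int_{[O(V)]}\theta(g,h;\varphi)\,dh$ over $[O(V)]=O(V)(\Q)\backslash O(V)(\A)$, with $\varphi$ the Schwartz function on $V^n(\A)$ attached to the chain (characteristic functions of the $(L_j)_p$ at finite $p$, Gaussians at $\infty$), is a finite weighted sum of class theta series which, transferred to $\Hn$ by strong approximation for $Sp_n$, is exactly $\vtn(\gen(L_1,\dots,L_n))$. The condition $m=2k>2n+2$---the very range in which $E^{(n),T}_k$ is defined by a convergent series---is also the range in which the Siegel Eisenstein series $E(g,s;\varphi)$ on $Sp_n(\A)$ attached to $\varphi$ converges absolutely at the Siegel--Weil point $s_0$, so the Siegel--Weil formula in its convergent-range form applies and gives $\vtn(\gen(L_1,\dots,L_n))=\kappa\,E(\,\cdot\,,s_0;\varphi)$ for a positive constant $\kappa$. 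The local section at a finite prime $p$ is the one determined by $\mathbf{1}_{(L_j)_p}$; for $p\mid N$ it is invariant under the local paramodular group $Sp(\Lambda_p)$ (the adelic counterpart of the modularity result of Section~\ref{sec5}), at $p\nmid N$ it is the standard spherical section, and at $\infty$ it produces the scalar weight $k$. Thus $E(\,\cdot\,,s_0;\varphi)$ is right $\prod_p Sp(\Lambda_p)$-invariant and holomorphic of weight $k$, hence descends to a classical Siegel Eisenstein series for $\Gamma^{(n)}(T)=Sp_n(\Q)\cap\prod_p Sp(\Lambda_p)$, which by the preceding lemma equals $c\,E^{(n),T}_k$. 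Therefore $\vtn(\gen(L_1,\dots,L_n))=\kappa c\,E^{(n),T}_k$, and applying $\Phi^n$ forces $\kappa c=1$.

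Equivalently, and closer to the rest of the paper, one can compare Fourier coefficients directly: an induction on $n$ using the operators $\Phi_g$ attached to the boundary components of Corollary~\ref{cusps_cosets_isotropic}---whose codimension-one ones again involve paramodular groups of degree $n-1$, with possibly different but still square free polarization matrices, see Lemma~\ref{matrixrepresentatives_cusps}---shows that $\vtn(\gen(L_1,\dots,L_n))-E^{(n),T}_k$ is a cusp form, by the criterion in the remark following Corollary~\ref{cusps_cosets_isotropic}, and its vanishing amounts to the equality of the genus-averaged representation numbers of positive definite $H$ with the products of local densities obtained by unfolding $E^{(n),T}_k$, i.e.\ to Siegel's mass formula in the paramodular setting. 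In either approach the one computation not already covered by the classical case is the ramified local factor at the primes $p\mid N$: identifying there the local Siegel--Weil section---equivalently, the local representation density of the chain $(L_1)_p\supseteq(L_n)_p$, which by square freeness has a single unimodular and a single $p$-modular summand---with the local Euler factor of $E^{(n),T}_k$. I expect this local comparison at the bad primes to be the main obstacle; the modularity of $\vtn(\gen(\cdot))$, the reduction to proportionality, the $\Phi$-operator bookkeeping and the final normalization are routine.
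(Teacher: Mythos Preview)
Your approach is correct and matches the paper's: apply the general adelic Siegel--Weil formula of Kudla--Rallis with the characteristic function of the lattice chain as the finite part of the test function, then identify the resulting adelic Eisenstein series with the classical $E^{(n),T}_k$ (the paper simply cites \cite{kudla_rallis} and \cite{kudla_extensions} for these two steps and gives no further detail). Your worry about a ``local comparison at bad primes'' being the main obstacle is misplaced---the Siegel--Weil theorem is a global identity of automorphic forms, so once the Schwartz function is fixed the two sides agree without any prime-by-prime Euler-factor verification, and your $\Phi^n$ argument then pins down the normalizing constant.
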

 \begin{proof}
This is a consequence of the general Siegel-Weil theorem as given in
\cite{kudla_rallis}.
The translation between the adelic setting used there and our present
classical setting is provided by 
\cite{kudla_extensions}, see in particular Section IV.2. In the notation used there we take as
the test function $\varphi_p:V_p^n\to \C$ at the finite primes $p$ the
characteristic function of the
$p$-adic completions of the given $n$-tuple of lattices and let
$\varphi_\infty(x_1,\dots,x_n)=\exp(-\pi \tr(Q(x_1,\dots,x_n)))$.
A standard argument (see e.g. \cite[Section 2]{yoshida}) shows that  the integral $I(g;\varphi)$ is the adelic
function corresponding to Siegel's weighted
average  $\vartheta^{(n)}(\gen(L_1,\ldots,L_n))$. 

That  the adelic Eisenstein series $E(g,s_0,\Phi)$ corresponds to the
classical Eisenstein series above under the usual correspondence
between adelic and classical modular forms is checked as in the case
of level $N=1$ in
\cite{kudla_extensions}, replacing $\Gamma=Sp(n,\Z)$ by the
paramodular group $\Gamma=\Gamma^{(n)}(T)$. Notice (still using
Kudla's notation) for this that Kudla's argument for the case $Sp(n,
\Z)$ goes through unchanged for our situation if we can show that 
$\Gamma=\Gamma^{(n)}(T)$ satisfies $P(\Q)\Gamma=G(\Q)$, thus
$P(\Q)\backslash G(\Q)=P(\Q)\cap \Gamma\backslash \Gamma$, and that the
function $\Phi_f$ associated to  the $\varphi_p$ satisfies $\Phi_f(\gamma)=1$ for all $ \gamma \in
\Gamma^{(n)}(T)$.
Indeed Corollary \ref{cusps_cosets_isotropic} implies $P(\Q)\Gamma=G(\Q)$.
Moreover, using the generators discussed in the proof of Theorem
\ref{theta_transformation} and the usual formulas for the action of
the local Weil representation $\omega_p$ (see again e.g. \cite[Section
2]{yoshida}), it is easily
checked that $\omega_p(\gamma)\varphi_p=\varphi_p$ for all $\gamma \in
\Gamma^{(n)}(T)$ and all primes $p$ and hence 
$\Phi_f(\gamma)=1$ for all $ \gamma \in
\Gamma^{(n)}(T)$ holds.
We notice in passing
that the last argument can also be used to give an adelic proof of Theorem
\ref{theta_transformation}.
 \end{proof}
 \begin{remark}
Using the relation
\begin{equation*}
  \Gamma^{(n)}(T)=
  \begin{pmatrix}
    T^{-1}&0_n\\
    0_n&T
  \end{pmatrix}
  \begin{pmatrix}
    T&0_n\\0_n&1_n
  \end{pmatrix} \hat{\Gamma}^{(n)}(T)
  \begin{pmatrix}
    T^{-1}&0_n\\0_n&1_n
  \end{pmatrix}
  \begin{pmatrix}
    T&0_n\\
    0_n&T^{-1}
  \end{pmatrix}
\end{equation*}
one sees that
\begin{equation*}
  E^{(n),T}_k(T^{-1}ZT^{-1})=\hat{E}^{(n),T}(Z),
\end{equation*}
where $\hat{E}^{(n),T}(Z)$ denotes the Eisenstein series of Siegel in 
\cite{siegel_stufe} 
 defined using the group $\hat{\Gamma}^{(n)}(T)$, so that Siegel's
 Eisenstein series is the genus theta series attached to  the lattice
 $n$-tuple  $(L_1^\#,\ldots,L_n^\#)$.    
 \end{remark}
\section{The basis problem for cusp forms of square free level}\label{sec7}
\subsection{Pullback of Eisenstein series}\label{sec7.1}
We start from a ``polarization matrix'' ${\mathcal T}$ of size $2n$
of the special form 
$\mathcal T= \left(\begin{array}{cc} T & 0\\
0 & T\end{array}\right)$. Furthermore we denote the element 
$\left(\begin{array}{cc} z & 0\\ 0 & w\end{array}\right)\in {\mathfrak H_{2n}}$ with $z,w\in {\mathfrak H}_n$ by $\iota(z,w)$.
\\
For the degree $2n$ Eisenstein series $E^{(2n),\mathcal T}_k$
and a degree $n$ cusp form $f$ for $\Gamma^{(n)}(T)$ we consider
the
function $\Omega(f)$ on $H_n$ defined by

$$w\longmapsto \Omega(f)(w):= 
\int_{\Gamma^{(n)}(T)\backslash H_n} f(z) 
\overline{E_k^{(2n),\mathcal T} 
(
\iota(z,-\bar{w}))
}\det(y)^k dz_n,$$ 
where $dz_n$ denotes the invariant symplectic volume element on 
${\mathfrak H}_n$.\\
We only give a sketch of the unfolding in this case and 
refer to \cite{garrett} and \cite{bs1},
where the case of level 1 was considered in detail. 
We recall from loc. cit. that one needs a ``Garrett double coset decomposition''
and a ``twisted coset decomposition'' describing $P\backslash \Gamma^{2n}({\mathcal T})/(\Gamma^{(n)}(T)\times \Gamma ^{(n)}(T))$.\\
Once the former decomposition 
is available, then the latter decomposition is obtained by
a routine calculation.\\
The key new ingredient is  Theorem \ref{maintheorem},
applied to the situation $n=m, T=T'$, which 
gives the desired double coset decomposition. Noticing that in view
of $\Gamma^{(n)}(T) B \Gamma^{(n)}(T)=\Gamma^{(n)}(T) (-B) \Gamma^{(n)}(T)$ the
representatives given there can also be used as representatives for the double
cosets $Pg (\Gamma^{(n)}(T)\times
\Gamma^{(n)}(T))\subseteq \Gamma^{(2n)}({\mathcal T})$. 
We can then closely follow
the strategy of loc.cit.:

Without further notice, we freely use the notation from Theorem
\ref{maintheorem}. Note that due to the definition of Siegel Eisenstein series
we have to use the double coset decomposition of Theorem \ref{maintheorem}
in ``opposite order" (by inverting everything).
We decompose the  summation defining the Eisenstein series 
into subseries according to Theorem \ref{maintheorem}:

\smallskip
A natural decomposition is given by collecting all 
left cosets  $P\cdot g\in \Gamma^{(2n)}({\mathcal T})$,
which belong to a fixed double coset (encoded by $C$):
$$E_k^{(2n),{\mathcal T}}(\iota(z,w))=\sum_C \omega_C(z,w).$$
Note that $\omega_C$ is a modular form in $z$ and $w$ for $\Gamma^{(n)}(T)$.
The summands appearing in a fixed $\omega_C$  are described 
by 
the ``twisted double coset decomposition''; as in the level one 
case, this is of type
\begin{equation}P\cdot \left(\begin{array}{cc} 1_{2n} & 0\\
C & 1_{2n}\end{array}\right) (\Gamma^{(n)}(T)\times \Gamma^{(n)}(T))=\cup_{\gamma,\delta} P\cdot \left(\begin{array}{cc} 1_{2n} & 0\\
C & 1_{2n}\end{array}\right) \cdot \gamma\times \delta, 
\label{twisted}\end{equation}  
where $\gamma$ and $\delta$ run over representatives of left cosets in $\Gamma^{(n)}(T)$ modulo certain subgroups depending on $C$, but independently 
of each other. This follows in the same way as in the level one case.

\smallskip
We can compute the contributions of the 
$\omega_C$  to the integral $\Omega(f)$
individually: The main points are
\begin{itemize}
\item By elementary matrix calculation, one has
\begin{equation}j(\left(\begin{array}{cc} 1_{2n} & 0_{2n}\\
C & 1_{2n}\end{array}\right),\iota(z,w))= 
\det(1_r-{} ^tBT'^{-1}w^S_1T'^{-1}B z^S_1),\label{kern}\end{equation}
where $w^S_1$ denotes the submatrix of size $r$ in the upper left corner
of $S_2^{-1}wS^{-t}_2$ (and analogously, $z^S_1$ = upper left corner
of size $r$  in $S_1^{-1}zS_1^{-t}$.
\item 
None of the $\omega_C$ with $r<n$ contributes to the doubling integral:\\
One may argue in essentially the same way as for level one (see \cite[\S 8]{garrett}
or \cite[p.238]{KliKatata}.
We briefly sketch the proof in our context:\\
It follows from general principles that  $\omega_C$ as a 
function of $z$ is orthogonal to cusp forms:
This holds quite generally (under suitable convergence conditions)
for any modular form $F$ on ${\mathfrak H}_n$ 
for an arithmetic subgroup $\Gamma$ of 
$Sp(n,{\mathbb Q})$, constructed by averaging from a
function $\phi$ on $H_r$ with $r<n$ by
$$F(Z):=\sum_\gamma \Phi\mid_k \gamma.$$  
Here $\Phi(Z):=\phi(z_1)$ with $z_1$ as explained above and
$\gamma$ runs over $\Gamma$ modulo an appropriate subgroup
(a subgroup of a Klingen parabolic, containing nontrivial translations). 
The requested orthogonality follows 
by a standard unfolding argument, see e.g. \cite[\S 7]{Klibuch}.
We just have to observe, using Theorem 4.8 and 
(\ref{twisted}) together with (\ref{kern}) that 
$\omega_C$ can be viewed as such a function $F$ if $r<n$;
we do not need any explicit description of $\gamma,\delta$ in (\ref{twisted})
for this. 

\item In the special case $r=n$ with $S_1=S_2=1_n$, $T'=T$ and 
$B=1_n$ the right hand side of (\ref{kern}) is just $\det(1_n-TwTz)$.

The reproducing formula (attributed to Selberg)
$$\int_{{\mathfrak H}_n} f(z) \overline{\det(z-\bar{w})}^{-k} \det(y)^kdz_n=a_{n,k} f(w),$$
valid for $k>2n$ and any holomorphic function $f$ on ${\mathfrak H}_n$ 
satisfying a suitable
growth condition (where $a_{n,k}$ is a nonzero constant, for the explicit value
see e.g. \cite[p.78]{Klibuch})  gives then for a cusp
form $f$- after unfolding - for this special $C$
\begin{eqnarray*} \int_{\Gamma_0(T)\backslash {\mathfrak H}_n} f(z) 
\overline{\omega_C (
z,-\bar{w}))} \det(y)^k dz_n&=&\int_{{\mathfrak H}_n} f(z) \overline{\det(1+T\bar{w}Tz)}^{-k} \det(y)^kdz_n\\
&=& a_{n,k} \det(T)^{-k} \left(f\mid_k J_T\right))(w)\\
&=&a_{n,k} \det(T)^{-k}f(w).
\end{eqnarray*}
Here $J_T$ is as in \ref{J_T}.
\item More generally (and again just as in the level one case), 
for arbitrary $B$ with $r=n$, the Hecke operator %
associated to the double coset $\Gamma_n(T) h\Gamma_n(T)$ comes in:
 
\begin{eqnarray*}
\lefteqn{\int_{\Gamma_0(T)\backslash {\mathfrak H}_n} f(z) 
\overline{
\omega^B(z,-\bar{w})  }\det(z)^k dz_n=}\\
& & a_{n,k} \det(T)^{-k}\left(f\mid \Gamma^{(n)}(T)h\Gamma^{(n)}(T)\right)(w)
\cdot \det(h)^{-k}.\end{eqnarray*}
Here we switched notation from $\omega_C$ to $\omega^B$.
\end{itemize}
In the formula above, we used the standard definition of Hecke operators:
For any double coset $\Gamma^{(n)}(T)g\Gamma^{(n)}(T)$ with $g\in Sp(n,{\bf Q})$, 
we get an endomorphism of the space of modular forms (cusp forms) by
$$\left(f\mid \Gamma^{(n)}(T)g\Gamma^{(n)}(T)\right)(z):=\sum_{\delta}
(f\mid_k\delta)(z),$$
where  $\Gamma^{(n)}(T)g\Gamma^{(n)}(T)=\cup\,\Gamma^{(n)}(T)\delta$.
Note that the algebra generated by these endomorphisms is in general 
not commutative.
The results from Section \ref{sec3} show, however,  that commutativity holds
for the subalgebra generated by the $T(m)$. Moreover, the $T(m)$ define
selfadjoint operators (w.r.t. the Petersson inner product) on the 
space of cusp forms; to see this, one can use the same kind of reasoning
as for the commutativity in Section \ref{sec3}, using the involution on the Hecke algebra
induced by $\alpha\longmapsto \alpha^{-1}$.
In particular, the space of cusp forms has a basis consisting of
simultaneous eigenforms of all the $T(m)$.
   
\subsection{Nonvanishing}\label{sec7.2}

Now we assume that $f$ is an eigenform of all the Hecke operators $T(m)$ 
with eigenvalues $\lambda(m)$.
Then $\Omega(f) $ is proportional to $f$ with a factor, 
which equals - up to the factor $a_{n,k}\det(T)^{-k}$ - the Dirichlet series
$${\mathcal L}(f,s):=\sum_d \lambda(d) d^{-s}$$
at $s=k$.

\smallskip
We will show that this Dirichlet series does not vanish at $s=k$:

We observe that this series inherits the absolute convergence at $s=k$  
from the corresponding property of the degree $2n$ Eisenstein series 
provided that $k>2n+1$.
\\
Also, from Lemma 3.8.  we have an Euler product expansion (for
$\Re(s)\gg 0$)

$${\mathcal L}(f,s)=\prod_p {\mathcal L}_p(f,s)$$

and it suffices to show that all these Euler 
factors are different from zero at $s=k$.
We shall do this without explicitly determining the Euler factors (for
primes not dividing $\det(T)$ one can write them as standard Euler factors
expressed by Satake parameters, but we do not use this here).\\

The Euler factors are of the form

$${\mathcal L}_p(f,s) =\sum_{j=0}^{\infty} \lambda(
p^j) p^{-js}=1+
{\mathcal R}_p(f,s).
$$

We show that the subseries ${\mathcal R}_p(f,s)$ of ${\mathcal L}_p(f,s)$
defined by $\mathcal R_p(f,s)=\sum_{j=1}^{\infty} \lambda_p(p^j)p^{-js}$ 
is of absolute value smaller than 1 at $s=k$ if $k$ is large enough:

The consideration in \cite{koh} 
shows that Hecke eigenvalues of cusp forms may 
quite generally be estimated by the number
of left cosets in the double coset defining a Hecke operator.

The results from Lemma 3.6 and Lemma 3.7 allow us then to
estimate $\lambda(p^j)$ by a power of 
$p^j$. 

We observe that the condition

$$\sum_{j=1}^{\infty} p^{-js} = \frac{p^{-s}}{1-p^{-s}} < 1$$

 holds for all real $s\geq  1$ if $p$ is odd (and for $p=2$ it holds for $s>1$). 

Using the explicit estimates from Lemma 3.6. we see that ${\mathcal L}_p(s)$
does not vanish at $s=k$ provided that $k\geq 2n+2$ and $p$ is odd;
the case $p=3$ needs a minor additional consideration.

Furthermore the nonvanishing also holds for $p=2$  if 
$k>2n+3$.
\begin{remark} The nonvanishing of ${\mathcal L}_p(s)$ at $s=k$
for $p$ coprime to $\det(T)$ also follows (under somewhat weaker conditions)
from the explicit form of the Euler factor, which is of the same type as in the 
level one case.\end{remark}
\begin{proposition}\label{nonvanish}  Assume that $k$ is even with $k> 2n+2$  and $f$ is a 
cusp form of weight $k$ for $\Gamma_0^{(n)}(T)$ and also a Hecke eigenform for 
all operators $T(d)$.
Then ${\mathcal L}(f,s)$ is nonzero at $s=k$, in particular,
$\Omega(f)$ is nonzero. If $\det(T)$ is odd, then this holds for $k\geq 2n+2$.
\end{proposition}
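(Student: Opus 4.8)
The plan is to reduce the nonvanishing of $\Omega(f)$ to that of the Dirichlet series ${\mathcal L}(f,s)=\sum_d\lambda(d)d^{-s}$ at $s=k$, then to the nonvanishing of each of its Euler factors, and finally to bound the tail of each Euler factor by a geometric series of ratio $<\tfrac12$. First I would record, from the pullback computation of Subsection~\ref{sec7.1} (following \cite{garrett,bs1}) together with Theorem~\ref{maintheorem}: when $f$ is a cusp form only the summands $\omega^B$ with $r=n$ survive the integration against $f$, and collecting their contributions --- each of the shape $a_{n,k}\det(T)^{-k}\,(f\mid\Gamma^{(n)}(T)h_B\Gamma^{(n)}(T))(w)$ times the automorphy factor produced by $\omega^B$ --- while regrouping the representatives $B\in{\mathcal R}_{1,1,n}$ of Corollary~\ref{blockdiagonal_rep} by $\det(B)=m$ yields $\Omega(f)=c\cdot{\mathcal L}(f,k)\cdot f$ with $c=c_{n,k,T}\ne 0$ and ${\mathcal L}(f,k)=\sum_m\lambda(m)m^{-k}$, $\lambda(m)$ being the eigenvalue of $T(m)$. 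Hence $\Omega(f)\ne 0\iff{\mathcal L}(f,k)\ne 0$, and since $T(m)=\prod_iT(p_i^{t_i})$ (the last lemma of Section~\ref{sec3}) gives the Euler product ${\mathcal L}(f,s)=\prod_p{\mathcal L}_p(f,s)$, it suffices to show ${\mathcal L}_p(f,k)=\sum_{j\ge 0}\lambda(p^j)p^{-jk}\ne 0$ for each prime $p$.

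Now fix $p$. Since $T(p^0)$ is the trivial double coset (the closing remark on the $T(p^j)$ in Section~\ref{sec3}), one has $\lambda(p^0)=1$, so ${\mathcal L}_p(f,k)=1+\sum_{j\ge 1}\lambda(p^j)p^{-jk}$ and it is enough that $\sum_{j\ge 1}|\lambda(p^j)|\,p^{-jk}<1$. For this I would invoke the principle of \cite{koh}: a Hecke eigenvalue of a cusp form is dominated by the number of left cosets occurring in the double cosets defining the operator, so with the normalization inherent in ${\mathcal L}(f,s)$ one gets $|\lambda(p^j)|\le N(p^j)$, $N(p^j)$ as in Section~\ref{sec3}; together with $N(p^j)\le N(p)^j$ (from the lemma on chains of neighbouring lattices) and the explicit bounds $N(p)<p^{2n+1}$ for $p>3$ or $p\nmid\det(T)$, $N(3)<\tfrac54 3^{2n+1}$, $N(2)<2^{2n+2}$ this gives $|\lambda(p^j)|\,p^{-jk}\le q_p^{\,j}$ with $q_p:=N(p)p^{-k}<p^{\,2n+1-k}$ (resp.\ $q_3<\tfrac54 3^{\,2n+1-k}$, $q_2<2^{\,2n+2-k}$). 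For $k\ge 2n+2$ and odd $p$ the strict inequalities give $q_p<\tfrac12$ (for $p=3$: $\tfrac54\cdot 3^{-1}=\tfrac5{12}<\tfrac12$), whence $\sum_{j\ge 1}q_p^{\,j}=q_p/(1-q_p)<1$; for $p=2$ with $2\nmid\det(T)$ one has $n_2=0$, hence $N(2)<2^{2n+1}$ and again $q_2<\tfrac12$ at $k=2n+2$; for $p=2$ with $2\mid\det(T)$ one needs $2^{\,2n+2-k}\le\tfrac12$, i.e.\ $k\ge 2n+3$, which for even $k$ means $k>2n+2$. In every case ${\mathcal L}_p(f,k)\ne 0$, so ${\mathcal L}(f,k)\ne 0$ and $\Omega(f)\ne 0$. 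The hypotheses $k>2n+1$ (absolute convergence of $E_k^{(2n),{\mathcal T}}$, hence of ${\mathcal L}(f,s)$ at $s=k$) and $k>2n$ (Selberg's reproducing formula) are subsumed in $k>2n+2$, and evenness of $k$ is needed only so that the paramodular forms and Eisenstein series involved are not identically zero.

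The step I expect to be the main obstacle is the endpoint behaviour at the small primes $p=2$ and $p=3$: there the crude neighbour count $N(p)$ exceeds $p^{2n+1}$, so $q_p$ is only barely below $\tfrac12$ (for $p=3$) or fails to be below $\tfrac12$ at $k=2n+2$ (for $p=2\mid\det(T)$), which is exactly what forces the stronger hypothesis when $\det(T)$ is even; making it watertight requires using the inequalities for $N(p)$ in their strict form and, for $p=2\nmid\det(T)$, the sharper value $N(2)<2^{2n+1}$. A second delicate point is that $|\lambda(p^j)|\le N(p^j)$ must be matched to precisely the normalization built into ${\mathcal L}(f,s)$ --- one has to check that the factor $p^{-jk}$ in the Dirichlet series is the one produced by the automorphy factors in the pullback identity --- since a mismatch in that exponent would change $q_p$ and could break the geometric estimate for the first few $j$.
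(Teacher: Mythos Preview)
Your proposal is correct and follows essentially the same route as the paper: reduce to the Euler product via the last lemma of Section~\ref{sec3}, bound $|\lambda(p^j)|$ by the coset count $N(p^j)\le N(p)^j$ using \cite{koh} and Lemma~3.5, feed in the explicit upper bounds for $N(p)$ from Lemma~3.6, and conclude that the tail $\sum_{j\ge 1}$ of each local factor has absolute value $<1$ at $s=k$. Your treatment is in fact more explicit than the paper's sketch (you carry out the $p=3$ case, which the paper only flags as needing ``a minor additional consideration''), and your handling of $p=2$ with the distinction $2\nmid\det(T)$ (so $n_2=0$ and $N(2)<2^{2n+1}$) versus $2\mid\det(T)$ is exactly what underlies the paper's dichotomy in the statement.
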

\subsection{ Basis problem}\label{sec7.3}
To combine the considerations above about pullbacks of Eisenstein series  
with Siegel's theorem, we
have to change our setting slightly (this is mainly a matter of notation):\\
We assume now that $T$ is of elementary divisor form:
$$T=\diag(1,t_2,\dots, t_n)\qquad (t_i\mid t_{i+1}).$$
We let $(L_1, L_2,L_3,\dots,L_n)$ run over representatives of the classes
in the genus of $n$-tuples of lattices of rank $m=2k$ which are paramodular of level $T$
(see Theorem 6.2).
Then 
$(L_1,L_1,L_2,L_2, ... , L_n,L_n)$ runs over representatives of the classes 
in the genus of 2n-tuples of lattices of the same rank with paramodular level
$\diag(1,1,t_2,t_2,\dots , t_n,t_n).$ 
By applying a suitable permutation of the entries of 
$Z\in {\mathfrak H}_{2n}$ we may now reformulate Siegel's theorem: The Eisenstein series
$E_k^{(2n),{\mathcal T}}$ is a linear combination of theta series
$\vartheta^{(2n)}(L_1,L_2,\dots , L_n, L_1,\dots ,L_n)$,
in particular,
$E_k^{(2n),{\mathcal T}}(\iota(z,w))$ is a linear combination of
$$\vartheta^{(n)}(L_1,\dots , L_n)(z)\times \vartheta^{(n)}(L_1,\dots, L_n)(w).$$
We may therefore express $\Omega(f) $ in the usual way as a linear 
combination of theta series of the desired type. 
Taking into account that $\Omega$
defines an automorphism of the space of cusp forms 
(see Proposition \ref{nonvanish}), we obtain

\begin{theorem}[Basis problem for paramodular cusp forms]
For $k\geq 2n+4$ and $4\mid k$ all cusp forms for $\Gamma^{(n)}(T)$ 
are linear combinations of theta series of type $\vartheta^{(n)}(L_1,\dots,L_n)$
as described in Section \ref{sec5}. If $\det(T)$ is odd, the same holds for $k\geq 2n+2$.
\end{theorem}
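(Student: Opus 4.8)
The plan is to run the doubling (pullback-of-Eisenstein-series) argument of \cite{boech} in the paramodular setting, assembling the pieces prepared in Sections~\ref{sec3}--\ref{sec6} and in Section~\ref{sec7}. First I would reduce to Hecke eigenforms: by the discussion at the end of Section~\ref{sec7.1} the operators $T(m)$ form a commuting family of self-adjoint operators on $S_k(\Gamma^{(n)}(T))$, so that space has a basis of simultaneous eigenforms, and it is enough to prove that each such eigenform $f$, say with $T(m)f=\lambda(m)f$, is a linear combination of the theta series $\vtn(L_1,\dots,L_n)$ attached to the paramodular lattice chains of level $T$.

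Next, for $k>2n+1$ the integral $\Omega(f)$ converges absolutely, and I would unfold it as in Section~\ref{sec7.1}: decomposing $E^{(2n),\cT}_k(\iota(z,w))=\sum_C\omega_C(z,w)$ along the double cosets of Theorem~\ref{maintheorem} taken with $m=n$, $T'=T$, one checks (using $(\ref{twisted})$ and $(\ref{kern})$) that the summands with $r<n$ are averages of functions pulled back from a lower-degree Siegel half space, hence orthogonal to the cusp form $f$, whereas for $r=n$ the Selberg reproducing formula converts $\omega_C$ into the action on $f$ of the Hecke operator of a double coset $\Gamma^{(n)}(T)h\Gamma^{(n)}(T)$. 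Summing over $C$ and grouping these double cosets by determinant into the operators $T(m)$ of Lemma~3.8 yields $\Omega(f)=c\,\mathcal{L}(f,k)\,f$ for a nonzero constant $c=c(n,k,T)$, where $\mathcal{L}(f,s)=\sum_d\lambda(d)\,d^{-s}$. By Proposition~\ref{nonvanish}, since $k$ is even one has $\mathcal{L}(f,k)\neq 0$ as soon as $k\geq 2n+4$, and as soon as $k\geq 2n+2$ when $\det(T)$ is odd; so under the hypotheses of the theorem $f$ is a nonzero scalar multiple of $\Omega(f)$.

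Finally I would show that $\Omega(f)$ lies in the span of the theta series. Since $4\mid k$ we have $8\mid m$, so Siegel's theorem for the paramodular group (Section~\ref{sec6}) applies; as reformulated in Section~\ref{sec7.3} — after a permutation of the entries of $Z$ — it says that $E^{(2n),\cT}_k$ is a finite linear combination of the theta series $\vartheta^{(2n)}(L_1,\dots,L_n,L_1,\dots,L_n)$, where $(L_1,\dots,L_n)$ runs over representatives of the (finitely many) classes in the genus of $n$-tuples of lattices of rank $m$ that are paramodular of level $T$. Restricting to $Z=\iota(z,w)$, the $2n$-fold theta series of such a doubled tuple factors as $\vtn(L_1,\dots,L_n)(z)\,\vtn(L_1,\dots,L_n)(w)$; substituting this into the definition of $\Omega(f)$, and using that theta series of positive definite forms have real Fourier coefficients (so $\overline{\vtn(L_1,\dots,L_n)(-\bar w)}=\vtn(L_1,\dots,L_n)(w)$), exhibits $\Omega(f)$ as a finite linear combination of the theta series $\vtn(L_1,\dots,L_n)$ of Section~\ref{sec5}, the coefficients being the Petersson products of $f$ with these theta series. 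Combined with the previous step, $f$ is itself such a linear combination, and by the reduction in the first step the same holds for every cusp form in $S_k(\Gamma^{(n)}(T))$, which is the assertion.

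The genuinely delicate inputs are all already in place and would be cited rather than reproved here: the coordinate-free generalization of Garrett's decomposition (Theorem~\ref{maintheorem}), needed because Garrett's explicit matrix manipulations do not readily generalize; the identification of the $r=n$ part of the pullback with a sum of the Hecke operators $T(m)$; and — the real crux — the nonvanishing of $\mathcal{L}(f,k)$ at $s=k$, which rests on the neighbor-counting estimates of Lemma~3.6 and is precisely what dictates the explicit lower bounds on $k$ and the separate, weaker bound for odd $\det(T)$. Granting these, the argument above is pure bookkeeping.
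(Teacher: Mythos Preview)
The proposal is correct and follows essentially the same route as the paper: express $\Omega(f)$ as a linear combination of theta series via the paramodular Siegel theorem (after the permutation/doubling trick of Section~\ref{sec7.3}), and use Proposition~\ref{nonvanish} to see that $\Omega$ hits every cusp form. The only cosmetic difference is that the paper packages the eigenform reduction plus nonvanishing as the single statement ``$\Omega$ is an automorphism of the space of cusp forms'', whereas you unwind this explicitly.
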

\begin{remark} Using standard techniques about equivariant holomorphic 
differential operators \cite{ibudiff}, one can deduce in the 
same way statements concerning 
theta series with harmonic polynomials
and  one can also give a solution of the basis problem  
for cuspidal vector-valued modular forms, see e.g. \cite{bks}. 
\end{remark}

 \parbox[t]{5.3cm}{ Siegfried Böcherer, Kunzenhof 4B\\79117 Freiburg, 
   Germany \\boecherer@t-online.de}
 \hfill \parbox[t]{7cm}{Rainer  Schulze-Pillot\\Fachrichtung
   Mathematik\\ Universität des
Saarlandes, Postfach 151150, 66041 Saarbrücken,
Germany\\schulzep@math.uni-sb.de}

\end{document}